\newcommand*\bigcdot{\mathpalette\bigcdot@{.8}}
\newcommand*\bigcdot@[2]{\mathbin{\vcenter{\hbox{\scalebox{#2}{$\m@th#1\bullet$}}}}}
\theoremstyle{plain} 
\newtheorem{thm}{Theorem}[section]
\newtheorem{cor}[thm]{Corollary} 
\newtheorem{lemma}[thm]{Lemma} 
\newtheorem{prop}[thm]{Proposition}
\theoremstyle{definition} 
\newtheorem{dfn}[thm]{Definition}
\newtheorem{rmk}[thm]{Remark} 
\newtheorem*{rw}{Related works}
\newtheorem*{mot}{Further questions}
\newtheorem*{plan}{Plan}
\newtheorem*{ack}{Acknowledgements}
\DeclareMathOperator{\C}{\mathbb{C}}
\DeclareMathOperator{\Q}{\mathbb{Q}}		
\DeclareMathOperator{\Z}{\mathbb{Z}}
\DeclareMathOperator{\R}{\mathbb{R}}
\DeclareMathOperator{\Hom}{\text{Hom}}
\DeclareMathOperator{\A}{\mathcal{A}}
\DeclareMathOperator{\T}{\mathcal{T}}
\DeclareMathOperator{\D}{\text{D}^b}
\DeclareMathOperator{\DD}{\emph{D}^{\emph{b}}}
\DeclareMathOperator{\ch}{\text{ch}}
\DeclareMathOperator{\td}{\text{td}}
\DeclareMathOperator{\Ku}{\mathsf{Ku}}
\DeclareMathOperator{\Coh}{\text{Coh}}
\DeclareMathOperator{\id}{\text{id}}
\def\P{\ensuremath{\mathbb{P}}}
\def\L{\ensuremath{\mathbb L}}
\def\Hom{\mathop{\mathrm{Hom}}\nolimits}
\def\Stab{\mathop{\mathrm{Stab}}\nolimits}
\DeclareMathOperator{\CCoh}{\emph{Coh}}
\def\AA{\ensuremath{\mathcal A}}
\def\CC{\ensuremath{\mathcal C}}
\def\DD{\ensuremath{\mathcal D}}
\def\FF{\ensuremath{\mathcal F}}
\def\HH{\ensuremath{\mathcal H}}
\def\II{\ensuremath{\mathcal I}}
\def\JJ{\ensuremath{\mathcal J}}
\def\KK{\ensuremath{\mathcal K}}
\def\NN{\ensuremath{\mathcal N}}
\def\OO{\ensuremath{\mathcal O}}
\def\PP{\ensuremath{\mathcal P}}
\def\TT{\ensuremath{\mathcal T}}
\def\ch{\mathop{\mathrm{ch}}\nolimits}
\def\rk{\mathop{\mathrm{rk}}}
\def\lHom{\mathop{\mathcal Hom}\nolimits}
\def\Aut{\text{Aut}}
\newcommand\blfootnote[1]{%
  \begingroup
  \renewcommand\thefootnote{}\footnote{#1}%
  \addtocounter{footnote}{-1}%
  \endgroup
}
\title{Some remarks on Fano threefolds of index two and stability conditions}
\author{Laura Pertusi and Song Yang}
\date{\today}
\begin{document}
\maketitle
\begin{abstract}
We prove that ideal sheaves of lines in a Fano threefold $X$ of Picard rank one and index two are stable objects in the Kuznetsov component $\Ku(X)$, with respect to the stability conditions constructed by Bayer, Lahoz, Macrì and Stellari, giving a modular description to the Hilbert scheme of lines in $X$. When $X$ is a cubic threefold, we show that the Serre functor of $\Ku(X)$ preserves these stability conditions. As an application, we obtain the smoothness of non-empty moduli spaces of stable objects in $\Ku(X)$. When $X$ is a quartic double solid, we describe a connected component of the stability manifold parametrizing stability conditions on $\Ku(X)$.
\end{abstract}

\blfootnote{\textup{2010} \textit{Mathematics Subject Classification}: \textup{14J35}, \textup{14J45}, \textup{18E30}.}
\blfootnote{\textit{Key words and phrases}: Fano threefolds, Bridgeland stability conditions, semiorthogonal decompositions, moduli spaces, cubic threefolds.}

\section{Introduction}

The notion of stability condition on a triangulated category has been introduced by Bridgeland in \cite{Bri}. One of the main powerful aspect in this theory is that the set parametrizing stability conditions on a triangulated category has a natural topology, which endows it of the structure of a complex manifold. An interesting question is to understand the properties of the stability manifold, e.g.\ if it is non-empty, simply-connected or connected, or giving a description of a connected component. 

Even in the geometric setting, considering the case of the bounded derived category $\D(X)$ of coherent sheaves on a smooth projective variety $X$, these questions are very hard to treat. A complete description of the stability manifold is known only if $X$ is a smooth projective curve by \cite{Bri}, \cite{Ok} and \cite{Macri}, where the authors consider elliptic curves, $\P^1$, and curves of genus $\geq 1$, respectively. A connected component of the stability manifold of a K3 or abelian surface is described in \cite{Bri2} (see also \cite{HMS} for the case of twisted K3 or abelian surfaces, and \cite{BaBri} for a further description for K3 surfaces of Picard rank one). More generally, in \cite{AB} the authors construct a family of stability conditions when $X$ is a surface. In dimension three, stability conditions are constructed for Fano threefolds (see for example \cite{BMT}, \cite{Macri2} for the projective space, \cite{LiFano} when the Picard rank is one, \cite{BMSZ}, \cite{P} for the general case), abelian threefolds (see \cite{BMS}, \cite{MP} and \cite{MP2}), some resolutions of finite quotients of abelian threefolds (see \cite{BMS}) and quintic threefolds (see \cite{Li}). See also \cite{Liu} for the construction of stability conditions on products with curves.

On the other hand, having stability conditions, it is possible to consider moduli spaces of stable complexes and investigate their properties, like non-emptyness, projectivity, smoothness. We recall that if $X$ is a K3 or abelian surface, then Inaba generalized in \cite{Inaba} Mukai's smoothness result in \cite{Mukai} to moduli spaces of simple objects in $\D(X)$. 

Recently, in \cite{BLMS} Bayer, Lahoz, Macr{\`{\i}} and Stellari have introduced a general criterion to induce stability conditions on the right orthogonal of an exceptional collection in a triangulated category $\mathscr{T}$, from a weak stability condition on $\mathscr{T}$ (see Section 2.2). They applied this result to the case of Fano threefolds of Picard rank one and of cubic fourfolds. As another application, we mention the construction of stability conditions on Gushel-Mukai varieties in \cite{PPZ}.

In this paper, we focus on the case of a Fano threefold $X$ of Picard rank one and index two. The bounded derived category has a semiorthogonal decomposition of the form
$$
\D(X)=\langle \Ku(X), \mathcal{O}_{X}, \mathcal{O}_{X}(H)\rangle,
$$
where $H:=-\frac{1}{2}K_X$. Here, by definition $\Ku(X)$ is the right orthogonal complement of the line bundles $\mathcal{O}_{X}$ and $\mathcal{O}_{X}(H)$, i.e.\
$$\Ku(X):= \lbrace E \in \D(X): \Hom_{\D(X)}(\mathcal{O}_X,E[p])=\Hom_{\D(X)}(\mathcal{O}_X(H),E[p])=0, \forall p\in \mathbb{Z} \rbrace.$$
The subcategory $\Ku(X)$ is called the \emph{Kuznetsov component}. We denote by $\sigma(\alpha,\beta)$ the stability conditions on $\Ku(X)$ constructed in \cite{BLMS}. As recalled in Theorem \ref{thm_U}, the values of $\alpha$ and $\beta$ vary in the set
$$V:= \lbrace (\alpha,\beta) \in \R_{>0} \times \R : -\frac{1}{2} \leq \beta < 0, \alpha < -\beta, \text{ or } -1 < \beta <-\frac{1}{2}, \alpha \leq 1+\beta \rbrace.$$
In Proposition \ref{prop_conncomp}, we show that the stability conditions $\sigma(\alpha,\beta)$ parametrized by $V$ are in the same orbit $\KK$ with respect to the right action of the universal covering space $\tilde{\mathrm{GL}}^+_2(\R)$ of $\mathrm{GL}_2^+(\R)$ on the stability manifold $\Stab(\Ku(X))$ of $\Ku(X)$.

The first result gives an interpretation of the Hilbert scheme of lines in $X$ as a moduli space of objects in $\Ku(X)$ which are stable with respect to a stability condition in the orbit $\KK$.

\begin{thm}\label{Fanolines_modspace}
Let $X$ be a Fano threefold of Picard rank one and index two. If $X$ has degree $\neq 1$, then for any $\sigma\in \KK$,
the Hilbert scheme of lines $\Sigma(X)$ in $X$ is isomorphic to a moduli space $M_\sigma(\Ku(X),[\II_\ell])$
of $\sigma$-stable objects in $\Ku(X)$ 
with the same numerical class as the ideal sheaf of a line in $X$. If $X$ has degree $1$, then $\Sigma(X)$ is an irreducible component of $M_\sigma(\Ku(X),[\II_\ell])$.
\end{thm}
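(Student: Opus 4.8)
The plan is to realise $\Sigma(X)$ inside $M_\sigma(\Ku(X),[\II_\ell])$ by means of the universal ideal sheaf, to show that the resulting morphism is an open and closed immersion, and -- when $\deg X\neq 1$ -- to show that it is surjective. \emph{Step 1: $\II_\ell\in\Ku(X)$ and its self-extensions.} Starting from the structure sequence $0\to\II_\ell\to\OO_X\to i_*\OO_\ell\to 0$ of a line $\ell\subset X$, together with $H^{\bullet}(X,\OO_X)=H^{\bullet}(\ell,\OO_\ell)=\C$, Kodaira vanishing $H^{\bullet}(X,\OO_X(-H))=0$, and $H^{\bullet}(\ell,\OO_\ell(-H))=H^{\bullet}(\P^1,\OO_{\P^1}(-1))=0$, one obtains $\Hom(\OO_X,\II_\ell[p])=\Hom(\OO_X(H),\II_\ell[p])=0$ for all $p$, hence $\II_\ell\in\Ku(X)$. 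From the same sequence and Serre duality I would record $\Hom(\II_\ell,\II_\ell)=\C$ and an isomorphism $\Ext^1_{\D(X)}(\II_\ell,\II_\ell)\cong H^0(\ell,N_{\ell/X})$, the Zariski tangent space of $\Sigma(X)$ at $[\ell]$; since $\Ku(X)\hookrightarrow\D(X)$ is fully faithful, these agree with the groups computed inside $\Ku(X)$.

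\emph{Step 2: $\II_\ell$ is $\sigma$-stable for every $\sigma\in\KK$.} By Proposition~\ref{prop_conncomp} the $\tilde{\mathrm{GL}}^+_2(\R)$-action permutes stable objects and their moduli spaces, so it suffices to take $\sigma=\sigma(\alpha,\beta)$ with $(\alpha,\beta)\in V$. By the criterion of \cite{BLMS} recalled in Theorem~\ref{thm_U}, an object of $\Ku(X)$ lying (up to shift) in the double-tilted heart of $\sigma(\alpha,\beta)$ is $\sigma(\alpha,\beta)$-stable as soon as it is $\sab$-stable, as an object of $\D(X)$, for the weak stability condition $\sab$. It therefore remains to verify -- a standard low-discriminant, or large-volume, computation for tilt stability, using $\ch(\II_\ell)=(1,0,-[\ell],\ast)$ -- that the appropriate shift of $\II_\ell$ lies in this heart and admits no $\sab$-destabilising subobject.

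\emph{Step 3: the morphism, and why it is an open and closed immersion.} The universal ideal sheaf on $\Sigma(X)\times X$ is flat over $\Sigma(X)$ with fibres the sheaves $\II_\ell$; since belonging to $\Ku(X)$ and $\sigma$-stability are open conditions in flat families and $M_\sigma(\Ku(X),[\II_\ell])$ is a separated algebraic space by \cite{BLMS}, this family induces a morphism $f\colon\Sigma(X)\to M_\sigma(\Ku(X),[\II_\ell])$. The morphism $f$ is injective on points, because $\ell=\operatorname{Supp}\bigl(\operatorname{coker}(\II_\ell\to\II_\ell^{\vee\vee})\bigr)$ recovers $\ell$ functorially from $\II_\ell$. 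It is moreover étale: on tangent spaces it is the isomorphism $H^0(\ell,N_{\ell/X})\xrightarrow{\ \sim\ }\Ext^1_{\Ku(X)}(\II_\ell,\II_\ell)$ of Step~1, and the full deformation--obstruction theory matches, since $\Ext^{<0}(\II_\ell,\II_\ell)=0$ forces every object of $M_\sigma$ near $[\II_\ell]$ to be a sheaf -- necessarily, by openness of torsion-freeness together with rank one, $c_1=0$ and $\operatorname{Pic}(X)=\Z H$, the ideal sheaf $\II_Z$ of a one-dimensional subscheme $Z$ with the Hilbert polynomial of a line -- so that $[Z]\mapsto[\II_Z]$ is an isomorphism of moduli functors over that neighbourhood. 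Since $\Sigma(X)$ is projective, $f$ is proper; being also étale, it is open, so its image is a union of connected components of $M_\sigma(\Ku(X),[\II_\ell])$ and $f$ maps $\Sigma(X)$ isomorphically onto it. As $\Sigma(X)$ is connected (indeed irreducible), the image is a single irreducible component; this already proves the degree-one statement and reduces the degree-$\neq 1$ statement to the surjectivity of $f$.

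\emph{Step 4: surjectivity when $\deg X\neq 1$, and the main obstacle.} It remains to show that every $\sigma$-stable $E\in\Ku(X)$ with $[E]=[\II_\ell]$ is the ideal sheaf of a line. I would reduce to $\sigma=\sigma(\alpha,\beta)$, place a shift of $E$ in the double-tilted heart, and, by analysing its cohomology sheaves and the subobjects and quotients permitted by the class $[\II_\ell]$ and by stability, show that $E$ is, up to shift, a $\sab$-semistable object; crossing the finitely many tilt-walls, or passing to the large-volume limit, then identifies $E$, up to shift, with a slope-semistable torsion-free sheaf of Chern character $\ch(\II_\ell)$; the phase fixes the shift, $c_1=0$ and $\operatorname{Pic}(X)=\Z H$ give $E\cong\II_Z$, and the Hilbert polynomial of $\II_Z$ forces $Z$ to be a line. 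Together with Step~3 this makes $f$ an isomorphism for $\deg X\neq 1$. The hard part is precisely this step in the excluded case $\deg X=1$: there the chain above breaks -- new destabilising walls for $\sab$ appear and there exist $\sigma$-stable objects of class $[\II_\ell]$ that are not ideal sheaves of lines -- so one can only conclude that $\Sigma(X)$ is an irreducible component of $M_\sigma(\Ku(X),[\II_\ell])$, as stated.
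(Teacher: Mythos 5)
Your overall architecture matches the paper's: prove $\II_\ell$ is $\sigma$-stable, classify the $\sigma$-stable objects of class $[\II_\ell]$ when $d\neq 1$, and use the universal ideal sheaf to upgrade the resulting bijection to an isomorphism (the paper does the last step by citing \cite[Section 5.2]{BMMS}). However, there is a genuine gap in Step 4: the entire classification of stable objects of class $[\II_\ell]$ --- which is the mathematical heart of the theorem and occupies Proposition \ref{lem_numclassI} --- is left as a plan. Your sketch says you would ``show that $E$ is, up to shift, a $\sigma_{\alpha,\beta}$-semistable object,'' but this passage from stability in $\Ku(X)$ to tilt-(semi)stability in $\D(X)$ is precisely the hard direction (your Step 2 only uses the easy converse), and it does not follow from generalities. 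The paper achieves it by a specific device: it locates parameters $(\bar\alpha,\bar\beta)\in V$ on the locus $2-d\beta^2+d\alpha^2=0$ where the induced slope of $G=F[1]$ becomes $+\infty$, so that $G$ is automatically $\sigma^0_{\bar\alpha,\bar\beta}$-semistable because it has the largest slope in the heart; it then analyses the potential wall realized by $\OO_X(-1)[2]$ (the semicircle of center $(0,-\tfrac{d+2}{2d})$), and, crucially, invokes the generalized Bogomolov--Gieseker inequality of \cite[Conjecture 4.1]{BMS} (known by \cite{Li}) at $\alpha=0$ to force the length $m$ of the torsion quotient $B$ to vanish when $d\neq 1$. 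None of these three ingredients appears in your outline, and ``crossing the finitely many tilt-walls'' cannot even begin until tilt-semistability at some point has been established.

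There is also a problem in Step 3. You claim $f$ is étale, hence open, so that the image is a union of connected components; for $d=1$ this would prove that $\Sigma(X)$ is a \emph{connected} component of $M_\sigma(\Ku(X),[\II_\ell])$, which is strictly stronger than the stated conclusion (irreducible component) and is not what happens: the classification in \cite{PeRo} exhibits stable objects of class $[\II_\ell]$ beyond ideal sheaves of lines in the moduli space containing $\Sigma(X)$. The culprit is your justification of étaleness, namely that ``every object of $M_\sigma$ near $[\II_\ell]$ is \dots the ideal sheaf $\II_Z$ of a \dots line'': this is a local form of the surjectivity you concede fails globally for $d=1$, and it is not established (openness of torsion-freeness, in particular, is asserted without proof and is not needed in the paper's argument). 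What the $d=1$ case actually requires is only that $[\ell]\mapsto[\II_\ell]$ be an injective, proper morphism inducing an isomorphism on tangent spaces (equivalently on deformation functors of the ideal sheaf versus the subscheme), so that its image is a closed subvariety of $M_\sigma$ which is an irreducible component; openness should not be claimed.
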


In the second part, we consider $X$ of degree $3$, i.e.\ a cubic threefold. We show an analogous of Mukai's smoothness result in this setting.  

\begin{thm}
\label{cor_smoothmod}
If $X$ is a cubic threefold, then non-empty moduli spaces of stable objects in $\Ku(X)$ with respect to a stability condition in $\KK$ are smooth.
\end{thm}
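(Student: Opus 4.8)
The plan is to deduce smoothness from a general principle: a moduli space of stable objects in a triangulated category at a point $[E]$ is smooth of the expected dimension $\dim\Ext^1(E,E)$ provided the obstruction space vanishes, i.e.\ $\Ext^2_{\Ku(X)}(E,E)=0$, since $E$ stable forces $\Hom(E,E)=\C$ and (by Serre duality inside $\Ku(X)$) control of $\Ext^2$. Thus the heart of the argument is a Serre-duality computation inside $\Ku(X)$ for $X$ a cubic threefold, combined with the fact that the Serre functor $S_{\Ku(X)}$ satisfies $S_{\Ku(X)}^3 = [5]$, or equivalently (by Kuznetsov) $S_{\Ku(X)}$ is the composition of an autoequivalence with a shift; concretely $S_{\Ku(X)} = \tau \circ [1]$ for an involutive (degree-two root of identity up to shift) autoequivalence $\tau$ with $\tau^{?}$ of finite order. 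I would record this as: for $E, F \in \Ku(X)$, $\Ext^i(E,F) \cong \Ext^{3-i}(F, S_{\Ku(X)}E)^\vee = \Ext^{2-i}(F,\tau E)^\vee$.

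First I would set up the deformation-theoretic statement precisely: for $\sigma\in\KK$ and $E$ a $\sigma$-stable object, the moduli space $M_\sigma(\Ku(X),[E])$ near $[E]$ is governed by the DG/A$_\infty$ algebra $\mathrm{RHom}_{\Ku(X)}(E,E)$; it is smooth at $[E]$ if $\Ext^2_{\Ku(X)}(E,E)=0$ (Kodaira–Spencer/obstruction theory, as in the work of Inaba–Mukai adapted to admissible subcategories, or via Kuznetsov's realization of $\Ku(X)$ as a CY category of "fractional" dimension). Second, I would invoke the result proved earlier in the paper (the hypothesis that, for $X$ a cubic threefold, the Serre functor $S_{\Ku(X)}$ preserves the stability conditions in $\KK$). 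This is the key input: since $S_{\Ku(X)}=\tau[1]$ with $\tau$ preserving $\sigma\in\KK$ up to the $\tilde{\mathrm{GL}}_2^+(\R)$-action, the object $\tau E$ is again $\sigma'$-stable for a stability condition $\sigma'$ in the same orbit, with the same phase behaviour as $E$ shifted appropriately.

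Third — the crux — I would compare phases. Write $\Ext^2_{\Ku(X)}(E,E) \cong \Hom_{\Ku(X)}(E,\tau E)^\vee$ by the Serre-duality formula above (with $i=2$, $3-i=1$, absorbing the shift in $S_{\Ku(X)}=\tau[1]$ to land on $\Hom$). Because $\tau$ preserves the orbit $\KK$ and acts on the central charge through an element of $\tilde{\mathrm{GL}}_2^+(\R)$, the objects $E$ and $\tau E$ are both $\sigma$-semistable (after adjusting $\sigma$ within its orbit) and their phases $\phi(E)$, $\phi(\tau E)$ differ by a controlled amount; tracking the numerical class $[E]=[\II_\ell]$ and the action of $\tau$ on the numerical Grothendieck group (a rank-two lattice for the cubic threefold, where $\tau$ acts with known eigenvalues), one finds $\phi(\tau E) < \phi(E) + 1$ is impossible to have a nonzero map unless $\phi(\tau E)\ge\phi(E)$, and in fact $\phi(\tau E)$ and $\phi(E)$ are such that $\Hom(E,\tau E)\ne 0$ would force $E\cong\tau E$, which contradicts the numerical action of $\tau$ (it moves the class $[E]$) — hence $\Hom_{\Ku(X)}(E,\tau E)=0$ and $\Ext^2_{\Ku(X)}(E,E)=0$. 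The main obstacle I anticipate is precisely this phase bookkeeping: one must show that $\tau$, acting on the (numerical) heart associated to $\sigma$, cannot fix the stability data of $E$, and handle the borderline case where $E$ and $\tau E$ might be Jordan–Hölder factors of a common object — this requires knowing the action of $\tau$ on $\mathcal{N}(\Ku(X))\cong\Z^2$ explicitly (the eigenvalues of $\tau_*$ are primitive roots of unity of order $3$ up to sign, reflecting $S_{\Ku(X)}^3=[5]$), and ruling out $\tau E\cong E[k]$ for the relevant $k$. Once $\Ext^2$ vanishes, smoothness (with tangent space $\Ext^1_{\Ku(X)}(E,E)$, of dimension computable from the numerical class via the Euler form and $S_{\Ku(X)}$) follows from the standard obstruction theory.
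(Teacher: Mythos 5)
Your high-level strategy coincides with the paper's: reduce smoothness to the vanishing of the obstruction space $\Ext^2_{\Ku(X)}(E,E)$, and obtain that vanishing from Serre duality inside $\Ku(X)$ together with the fact (Corollary \ref{cor_serrefunctorc3}) that $S_{\Ku(X)}$ preserves the orbit $\KK$, so that $S_{\Ku(X)}(E)$ is again stable and a phase comparison kills the relevant Hom space. However, two steps of your execution are genuinely wrong or missing. First, your structural claim about the Serre functor is false for cubic threefolds: you write $S_{\Ku(X)}=\tau\circ[1]$ with $\tau$ involutive (or of finite order), which is the quartic double solid picture ($S_{\Ku(X)}=\iota[2]$), not the cubic one. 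Here $S_{\Ku(X)}^3=[5]$, so if $S_{\Ku(X)}=\tau[1]$ then $\tau^3=[2]$ and $\tau$ cannot have finite order; no decomposition into a finite-order autoequivalence and a shift exists (this is exactly what ``fractional Calabi--Yau of dimension $5/3$'' means). Accordingly your duality formula $\Ext^i(E,F)\cong\Ext^{3-i}(F,S_{\Ku(X)}E)^\vee$ is mis-indexed: inside $\Ku(X)$ the correct statement is simply $\Hom(E,E[2])\cong\Hom(E[2],S_{\Ku(X)}(E))^\vee$, with no ambient ``$3-i$''.

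Second, and this is the real gap, the phase estimate that makes the vanishing work is never established. What is needed is precisely $\phi(S_{\Ku(X)}(E))<\phi(E)+2$ for $E$ stable, so that $\Hom(E[2],S_{\Ku(X)}(E))=0$ because source and target are stable and the source has strictly larger phase. Your proposed substitute --- tracking the action on $\mathcal{N}(\Ku(X))\cong\Z^2$ and arguing that $\Hom(E,\tau E)\neq0$ would force $E\cong\tau E$, contradicted by $\tau$ ``moving the class'' --- does not deliver this: the numerical action only controls the central charge, hence the phase modulo $2$, not its actual lift, and the theorem concerns an arbitrary class $\kappa$, not just $[\II_\ell]$. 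The paper's Lemma \ref{lemma_boundphaseS} obtains the bound from the two facts you already have: $S_{\Ku(X)}\cdot\sigma=\sigma\cdot\tilde{g}$ with $\tilde{g}=(g,M)$, $g$ increasing and $g(\phi+1)=g(\phi)+1$, together with $S_{\Ku(X)}^3=[5]$. If $\phi(S_{\Ku(X)}(E))\geq\phi(E)+2$, iterating $g$ three times gives $\phi(S_{\Ku(X)}^3(E))\geq\phi(E)+6$, contradicting $\phi(S_{\Ku(X)}^3(E))=\phi(E)+5$. You should replace your numerical bookkeeping by this iteration argument; you also need the statement that the heart has homological dimension $2$ (Lemma \ref{lemma_homdimheartc3}) to kill $\Hom^i(E,E)$ for $i\geq3$ and hence to compute $\hom^1(E,E)=1-\chi(E,E)$ from the Euler form, a point your sketch leaves implicit.
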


\noindent A key point in the proof of Theorem \ref{cor_smoothmod} is the fact that the Serre functor of $\Ku(X)$ preserves the orbit $\KK$, as shown in Corollary \ref{cor_serrefunctorc3}. As another application of this property, in Theorem \ref{thm_catTorelli} we give an alternative proof of the categorical Torelli Theorem proved in \cite[Theorem 1.1]{BMMS}.

Finally, in the degree-$2$ case, we describe a connected component of the stability manifold of $\Ku(X)$.

\begin{thm}
\label{thm_conncomp}
Let $X$ be a quartic double solid. Then the orbit $\KK$ is a connected component of maximal dimension of $\Stab(\Ku(X))$.
\end{thm}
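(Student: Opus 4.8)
The plan is to combine three ingredients: (i) the orbit $\KK$ is contained in a connected component $\Stab^\dagger(\Ku(X))$ of $\Stab(\Ku(X))$; (ii) every stability condition in $\Stab^\dagger(\Ku(X))$ lies, up to the $\tilde{\mathrm{GL}}^+_2(\R)$-action, in the closure of the region where the BLMS construction applies; and (iii) a rigidity argument showing that the heart and central charge are forced, so that $\Stab^\dagger(\Ku(X)) = \KK$. The dimension statement then follows since $\dim_\C \KK = \dim_\C \tilde{\mathrm{GL}}^+_2(\R) = 2$, which equals the rank of the numerical Grothendieck group $\mathcal{N}(\Ku(X))$; indeed for a quartic double solid $\Ku(X)$ has the same numerical $K$-theory as the derived category of a curve, so no component of $\Stab(\Ku(X))$ can have dimension exceeding $2$.

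First I would recall, from \cite{BLMS} and the preliminaries summarized around Theorem \ref{thm_U}, the explicit form of the heart $\mathcal{A}(\alpha,\beta)$ and central charge $Z_{\alpha,\beta}$ of $\sigma(\alpha,\beta)$, obtained by tilting and restricting the weak stability condition $\sigma_{\alpha,\beta}$ on $\D(X)$ to $\Ku(X)$. By Proposition \ref{prop_conncomp} all these lie in a single $\tilde{\mathrm{GL}}^+_2(\R)$-orbit $\KK$, which is a connected (in fact, since $\tilde{\mathrm{GL}}^+_2(\R)$ is connected, path-connected) subset of $\Stab(\Ku(X))$; let $\Stab^\dagger(\Ku(X))$ denote the connected component containing it. The goal is to prove the reverse inclusion $\Stab^\dagger(\Ku(X)) \subseteq \KK$.

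For this, I would use the numerical characterization of $\Ku(X)$ for a quartic double solid: $\mathcal{N}(\Ku(X)) \cong \Z^2$, generated by classes analogous to $\mathcal{O}$ and a point-type class on a curve, with an explicit Euler form. The key structural input is that $\Ku(X)$ behaves, at the level of its numerical invariants and the possible Harder--Narasimhan behaviour, like $\D$ of a (stacky or honest) curve or an Enriques-type category; in particular any stability condition $\sigma$ in $\Stab^\dagger(\Ku(X))$ has a central charge $Z\colon \Z^2 \to \C$ which, after acting by $\tilde{\mathrm{GL}}^+_2(\R)$, can be normalized; one then shows the heart must be one of the $\mathcal{A}(\alpha,\beta)$. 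Concretely, I would (a) show that $[\II_\ell]$ and the shifted structure sheaf class give two non-parallel vectors in the plane, (b) use Theorem \ref{Fanolines_modspace} and the stability of ideal sheaves of lines to pin down which objects are $\sigma$-stable of minimal "mass" near the boundary of $\KK$, and (c) apply the deformation/openness results of Bridgeland together with the support property — which holds on $\KK$ by the BLMS construction and is an open-and-closed type condition — to propagate the description across the whole component. A useful technical tool here is that, since the component has dimension $\leq 2$ and $\KK$ is already a full-dimensional submanifold which is itself a single orbit hence without boundary inside $\Stab$, $\KK$ is open in $\Stab^\dagger(\Ku(X))$; proving it is also closed (equivalently, that a limit of conditions in $\KK$ cannot escape $\KK$) finishes the argument.

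The main obstacle I expect is exactly this last closedness step: showing that no stability condition on the topological boundary of $\KK$ inside $\Stab^\dagger(\Ku(X))$ can fail to lie in $\KK$. The danger is the usual one in the theory — the central charge could degenerate (vanish on some class, or become non-injective) in the limit, producing a "wall at infinity". I would rule this out using the support property uniformly along $\KK$ (the quadratic form from the BLMS construction is $\tilde{\mathrm{GL}}^+_2(\R)$-invariant, hence defined on all of $\KK$, and controls the central charges of stable objects away from $0$), combined with the classification of spherical or rigid objects in $\Ku(X)$ for a quartic double solid — here one uses that $\Ku(X)$ has a Serre functor of the Enriques type (as opposed to the cubic threefold case), so that the relevant numerical constraints are rigid enough to prevent such degenerations. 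Once closedness is established, $\KK$ is open and closed and non-empty in the connected set $\Stab^\dagger(\Ku(X))$, so they coincide, and the dimension count above shows this component is of maximal dimension among all components of $\Stab(\Ku(X))$.
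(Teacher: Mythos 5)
Your overall architecture (show $\KK$ is open and closed in its connected component, then count dimensions) matches the paper's, and you correctly identify the danger: a limit of stability conditions in $\KK$ whose central charge degenerates. However, there is a genuine gap in your proposed treatment of exactly that step. The support property on $\KK$ holds with the \emph{trivial} quadratic form $Q=0$ (the central charges there are injective on the rank-two lattice), so it gives no control beyond non-vanishing of $Z$ on classes of semistable objects; in particular it cannot prevent the image of $Z$ from collapsing onto a real line in the limit. In fact Remark \ref{rmk_almostcc} shows that $\KK$ is precisely a connected component of $\mathcal{Z}^{-1}(U)$ for $U$ the set of injective central charges, so the topological boundary of $\KK$ inside $\Stab(\Ku(X))$ -- if nonempty -- consists \emph{exactly} of stability conditions with degenerate central charge. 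Ruling these out is the whole content of the theorem, and neither the support property nor a general appeal to ``classification of rigid objects'' supplies the argument.

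What is actually needed, and what the paper does, is the following chain: the involution $\iota$ acts trivially on $\overline{\KK}$ (Lemma \ref{lemma_iotapreseK}), so Serre duality $\Hom^i(A,B)\cong\Hom(B,\iota(A)[2-i])^*$ gives homological dimension $2$ for hearts on $\overline{\KK}$ and excludes objects with $\hom^1(A,A)\leq 1$ (Lemmas \ref{lemma_homdimheart}, \ref{lemma_extdimheart}); an $\iota$-twisted weak Mukai Lemma (Lemma \ref{Mukailemma}) then upgrades the $\sigma$-semistability of $\II_\ell$ and $\JJ_\ell$ at a boundary point $\sigma$ (which follows from deformation plus the support property, as you suggest) to genuine $\sigma$-stability -- and the case $\Hom(A,\iota(B))\neq 0$ requires a separate diagram chase using that $\iota(C)$ is stable of the same phase as $C$. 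Finally, the contradiction comes not from any support-property estimate but from the geometric fact that there exist nonzero maps $\II_\ell\to\JJ_{\ell'}$ and $\JJ_\ell\to\II_{\ell'}[1]$, forcing the strict phase inequalities \eqref{eq_phases}, which are incompatible with $Z$ having image in a line. Your proposal gestures at the right ingredients (stability of ideal sheaves of lines, the Enriques-type Serre functor) but does not assemble them into this argument, and the step you lean on most heavily -- the support property -- is the one that cannot carry the weight.
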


\begin{rw}
The first example of a stability condition on the Kuznetsov component of a cubic threefold is given in \cite{BMMS}. In particular, the authors prove that ideal sheaves of lines are stable with respect to this stability condition and that they are the only stable objects with numerical class $[\II_\ell]$. In Propositions \ref{lemma_idealsheafstabinK} and \ref{lem_numclassI} we prove that the same results hold for the stability conditions in the connected component $\KK$, containing those constructed in \cite{BLMS} and for every degree $d \neq 1$. If $d=1$, we show the stability of ideal sheaves of lines in Proposition \ref{lemma_idealsheafstabinK}. 
\indent The $d=1$ case is further investigated in the recent preprint \cite{PeRo}, where the authors classify all the stable objects in the moduli space containing the Hilbert scheme of lines. 

In \cite[Theorem 1.1]{LPZ} the analogous of Theorem \ref{Fanolines_modspace} is proved in the case of the Fano variety of lines in a cubic fourfold.

The analogous of Inaba's smoothness result has been proved in \cite{LiZhao2} for smooth projective surfaces with a Poisson structure and in \cite{BLM+} for moduli spaces of simple complexes in the Kuznetsov component of a cubic fourfold. In the upcoming paper \cite{Perry} this result is generalized to families of two-dimensional Calabi-Yau categories.

In \cite{APR} the authors study certain moduli spaces of stable objects in $\Ku(X)$ with torsion class in the numerical Grothendieck group, with application to Torelli type questions in the case of quartic double solids.
\end{rw}

\begin{mot}
The category $\Ku(X)$ can be considered as a non-commutative curve, e.g.\ the numerical Grothendieck group of $\Ku(X)$ is the same of that of a curve. We hope that the results in this paper could be useful to understand whether $\Stab(\Ku(X))$ has a unique connected component when the degree of $X$ is $2$ or $3$, completing the analogy with curves. Note that this is already known in the degree-$4$ case applying \cite[Theorem 2.7]{Macri}, as the Kuznetsov component is equivalent to the bounded derived category of a genus-$2$ curve by \cite[Theorem 4.4]{Kuz}. Moreover, if $X$ has degree $5$, then $\Stab(\Ku(X))$ is completely described by \cite[Theorem 1.1]{DK19} (see Remark \ref{rmk_d=4or5}).

In the case of cubic threefolds, it would be interesting to understand if the stability condition $\bar{\sigma}$ constructed in \cite{BMMS} is in the orbit $\KK$ containing the stability conditions constructed in \cite{BLMS}. Theorem \ref{Fanolines_modspace} and, more generally, the properties proved in Section 5 give an evidence to this guess.
\end{mot}

\begin{plan}
In Section 2 we review the definition of (weak) stability conditions on a triangulated category and their construction in the case of $\D(X)$. In Section 3 we discuss the method to induce stability conditions on the Kuznetsov component of a Fano threefold $X$ of index two and Picard rank one introduced in \cite{BLMS} and we prove that these stability conditions are in the same orbit $\KK$ with respect to the $\tilde{\mathrm{GL}}^+_2(\R)$-action (Proposition \ref{prop_conncomp}). In Section 4 we prove Theorem \ref{Fanolines_modspace}, showing that ideal sheaves of lines are $\sigma$-stable and that if $X$ has degree $\neq 1$, the only $\sigma$-stable objects with that numerical class are ideal sheaves of lines for $\sigma \in \KK$ (Propositions \ref{lemma_idealsheafstabinK} and \ref{lem_numclassI}). Section 5 is devoted to cubic threefolds. We prove that the Serre functor of the Kuznetsov component preserves the orbit $\KK$ (Corollary \ref{cor_serrefunctorc3}). This computation is rather technical and based on \cite[Lemma 3]{LiZhao2} (see also \cite[Lemma 3.1]{FLLQ}), which allows to control the phase of semistable factors when deforming the stability condition. After explaining some consequences of this result, we prove Theorem \ref{cor_smoothmod} and the categorical Torelli Theorem \ref{thm_catTorelli}. In Section 6 we consider quartic double solids and we prove Theorem \ref{thm_conncomp}.
\end{plan}

\begin{ack}
It is a pleasure to thank Arend Bayer, Chunyi Li, Emanuele Macrì, Paolo Stellari and Xiaolei Zhao for many useful discussions and comments on the preliminary version of this work. We would especially like to thank Arend Bayer for suggesting the application in Theorem \ref{cor_smoothmod} and Chunyi Li for his help in the proof of Lemma \ref{lemma_secondtilt}. We are also grateful to Marin Petkovi\'c and Franco Rota for pointing out a mistake in Theorem \ref{Fanolines_modspace} in the previous version of the paper. Finally, we thank the referees for careful reading of the paper and useful suggestions.

This paper was written when the second author was visiting the Department of Mathematics of the Università degli Studi di Milano funded by the China Scholarship Council and the first author was visiting the Max-Planck-Institut f\"ur Mathematik in Bonn, the University of Edinburgh and the University of Warwick. We thank these institutions for their hospitality.

The first author is supported by the ERC Consolidator Grant ERC-2017-CoG-771507, StabCondEn.
The second author is partially supported by the NSFC Grant 11701414.
\end{ack}

\section{Construction of (weak) stability conditions}

In this section we review the definition of (weak) stability conditions on a triangulated category, the general criterion to induce stability conditions on Kuznetsov components proved in \cite[Proposition 5.1]{BLMS} and the construction of weak stability conditions by double tilting on $\D(X)$ given in \cite[Section 2]{BLMS}.

\subsection{(Weak) stability conditions}

A (weak) stability condition on a triangulated category $\mathscr{T}$ is essentially given by a heart of a bounded t-structure and a (weak) stability function, satisfying some conditions. Let us recall precisely these notions.

\begin{dfn}
A {\it heart of a bounded $t$-structure} on $\mathscr{T}$ is a full subcategory $\A\subset \mathscr{T}$ such that
\begin{enumerate}
\item[(i)]  for $E, F\in \A$ and $k<0$ we have $\Hom(E, F[k])=0$, and
\item[(ii)]  for every object $E\in \mathscr{T}$ there is a sequence of morphisms
$$
\xymatrix@C=0.5cm{
  0 =E_0 \ar[r]^{\phi_{1}}& E_{1} \ar[r]^{} & \cdots  \ar[r]^{\phi_{m}\;\;\;\;\;} & E_{m} =E }
$$
such that $\mathrm{Cone}(\phi_{i})$ is of the form $A_{i}[k_i]$ for some sequence $k_1>k_2>\cdots>k_m$
of integers and objects $A_i\in \A$.
\end{enumerate}
\end{dfn}

\begin{dfn}
\label{def_weakstab}
Let $\A$ be an abelian category. A {\it weak stability function} on $\A$ is a group homomorphism 
$$
\begin{array}{cccl}
Z:& K(\A)&\longrightarrow& \C \\
&E& \longmapsto& \Re Z(E) +i \Im Z(E),
\end{array}
$$
where $K(\A)$ denotes the Grothendieck group of $\A$, such that for every non-zero object $E\in \A$, we have 
$$
\Im Z(E)\geq 0, \; \textrm{and}\; \Im Z(E)= 0 \Rightarrow \Re Z(E)\leq 0.
$$
We say that $Z$ is a {\it stability function} on $\A$ if in addition for $E \neq 0$, $\Im Z(E)= 0$ implies $\Re Z(E)< 0$.
\end{dfn}

Fix a finite rank lattice $\Lambda$ and 
a surjective homomorphism $\upsilon: K(\mathscr{T})\twoheadrightarrow\Lambda$.

\begin{dfn}
A {\it weak stability condition on $\mathscr{T}$} with respect to the lattice $\Lambda$ is a pair $\sigma=(\A, Z)$, where $\A$ is the heart of a bounded $t$-structure and $Z: \Lambda \rightarrow \mathbb{C}$ is a group homomorphism, such that the following conditions hold:
\begin{enumerate}
\item[(i)] The composition $K(\A)=K(\mathscr{T})\xrightarrow{\upsilon} \Lambda\xrightarrow{Z} \mathbb{C}$ is a weak stability function on $\A$; for simplicity, we denote $Z(E):=Z(\upsilon(E))$.
The function $Z$ allows to define a {\it slope} for any object $E\in \A$ by
$$
\mu_{\sigma}(E)
:=
\begin{cases}
-\frac{\Re Z(E)}{\Im Z(E)} & \textrm{if}\, \Im Z(E)>0; \\
+\infty &  \textrm{otherwise};
\end{cases}
$$
and a notion of {\it stability} : an object $0\neq E\in \A$ is called {\it $\sigma$-semistable} (resp. {\it $\sigma$-stable}) if for every non-zero proper subobject $F\subset E$, we have
$\mu_{\sigma}(F)\leq\mu_{\sigma}(E)$ (resp. $\mu_{\sigma}(F)<\mu_{\sigma}(E/F)$). 

\item[(ii)] (HN-filtrations) Any object of $\A$ has a Harder-Narasimhan filtration in $\sigma$-semistable ones.

\item[(iii)] (Support property) There is a quadratic form $Q$ on $\Lambda\otimes \mathbb{R}$ such that $Q|_{\ker Z}$ is negative definite, and $Q(E)\geq 0$ for all $\sigma$-semistable objects $E\in \A$.
\end{enumerate}
\end{dfn}

\begin{dfn}
A weak stability condition $\sigma=(\A, Z)$ on $\mathscr{T}$ with respect to the lattice $\Lambda$ is called a {\it Bridgeland stability condition} if $Z$ is a stability function.
\end{dfn}

We need to introduce some terminology we will use in the following. Let $\sigma$ be a (weak) stability condition for $\mathscr{T}$. 
\begin{dfn}
\label{def_slicing}
The \textit{phase} of a $\sigma$-semistable object $E \in \A$ is
$$\phi(E):=\frac{1}{\pi}\text{arg}(Z(E)) \in (0,1].$$
If $Z(E)=0$, then $\phi(E)=1$. For $F=E[n]$, we set
$$\phi(E[n]):=\phi(E)+n.$$ 
A \textit{slicing} $\PP$ of $\mathscr{T}$ is a collection of full additive subcategories $\PP(\phi) \subset \mathscr{T}$ for $\phi \in \R$, such that:
\begin{enumerate}
\item[(i)] for $\phi \in (0,1]$, the subcategory $\PP(\phi)$ is given by the zero object and all $\sigma$-semistable objects with phase $\phi$;
\item[(ii)] for $\phi+n$ with $\phi \in (0,1]$ and $n \in \Z$, we set $\PP(\phi+n):=\PP(\phi)[n]$.
\end{enumerate}
\end{dfn}

\noindent We will both use the notation $\sigma=(\AA,Z)$ and $\sigma=(\PP,Z)$ for a (weak) stability condition with heart $\A=\PP((0,1])$, where $\PP$ is a slicing. 

We denote by $\Stab(\mathscr{T})$ the set of stability conditions on $\mathscr{T}$. A very deep result of Bridgeland is that $\Stab(\mathscr{T})$ is actually a complex manifold, as stated below.  
\begin{thm}[Bridgeland Deformation Theorem, \cite{Bri}]
The continuous map $\mathcal{Z}: \Stab(\mathscr{T}) \to \Hom(\Lambda,\C)$ defined by $(\A,Z) \mapsto Z$, is a local homeomorphism. In particular, the topological space $\Stab(\mathscr{T})$ has the structure of a complex manifold of dimension $\rk(\Lambda)$.
\end{thm}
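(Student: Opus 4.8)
The plan is to reconstruct Bridgeland's argument, whose core is a deformation lemma: given a stability condition $\sigma=(\PP,Z)$ and a homomorphism $W\colon\Lambda\to\C$ sufficiently close to $Z$, one produces a unique stability condition $\sigma'=(\PP',W)$ near $\sigma$. The crucial input is the support property (iii). Since $Q|_{\ker Z}$ is negative definite while $Q(E)\ge 0$ on every $\sigma$-semistable object, the semistable classes all lie in the cone $\{v : Q(v)\ge 0\}$, which meets $\ker Z$ only in the origin; a compactness argument then yields a constant $C>0$ with $\|\upsilon(E)\|\le C\,|Z(E)|$ for all $\sigma$-semistable $E$. This single uniform estimate is the engine of the whole proof: it upgrades ``$W-Z$ small as a linear map'' into ``$W$ and $Z$ assign almost equal phases to every semistable object at once.''

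First I would fix the topology on $\Stab(\mathscr{T})$ coming from the generalized metric that compares the maximal and minimal phase functions $\phi^\pm$ of the two slicings together with $\|Z_1-Z_2\|$; continuity of $\mathcal{Z}$ is then immediate, as $\|Z_1-Z_2\|$ is one of the quantities controlled by the distance. For local injectivity, suppose two nearby stability conditions share the central charge $Z$. A $\sigma_1$-semistable object $E$ has $\arg Z(E)=\pi\,\phi_{\sigma_1}(E)$; closeness of the slicings (distance $<1$) together with the equality $Z_1=Z_2$ pins its $\sigma_2$-phase to the same value and forces $E$ to be $\sigma_2$-semistable of that phase. By symmetry the two slicings agree, so $\mathcal{Z}$ is locally injective.

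The main step, and the main obstacle, is local surjectivity: for each $W$ near $Z$ I must build a stability condition with central charge $W$. Using the uniform estimate, I would choose $\eta>0$ so that $W$ moves every $\sigma$-phase by less than $\eta$. For a short interval $I=(a,b)$ with $b-a<1$ the subcategory $\PP(I)$ is quasi-abelian, and $W$ restricts to a weak stability function on it, since $\sigma$-semistable objects of phase in $I$ are sent by $W$ into the corresponding sector. Running Harder--Narasimhan on $\PP(I)$ with respect to $W$ --- where the support property is invoked a second time to exclude infinite filtrations --- produces refined pieces, which I would glue across overlapping intervals into a slicing $\PP'$ with central charge $W$. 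It remains to verify that $(\PP',W)$ still satisfies the support property: for $W$ close enough to $Z$ the form $Q|_{\ker W}$ stays negative definite and the $W$-semistable cone is only a small enlargement of the $Z$-semistable cone, hence still contained in $\{v : Q(v)\ge 0\}$ after shrinking the neighborhood.

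Together these steps exhibit a continuous local inverse to $\mathcal{Z}$, so $\mathcal{Z}$ is a local homeomorphism onto an open subset of $\Hom(\Lambda,\C)\cong\C^{\rk(\Lambda)}$; pulling back the standard complex charts endows $\Stab(\mathscr{T})$ with the structure of a complex manifold of dimension $\rk(\Lambda)$. The genuinely delicate points are the existence of the deformed Harder--Narasimhan filtrations and the persistence of the support property, and in both the quasi-abelian formalism together with the uniform bound from $Q$ is exactly what makes the construction go through --- most of the real work lies in tracking how small $\|W-Z\|$ must be for all the estimates to close up.
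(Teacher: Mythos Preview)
The paper does not prove this theorem at all: it is stated as background with a citation to Bridgeland's original paper \cite{Bri}, and no argument is given. There is therefore nothing in the paper to compare your proposal against. Your sketch is a faithful outline of Bridgeland's own proof (support property $\Rightarrow$ uniform bound $\|\upsilon(E)\|\le C|Z(E)|$, local injectivity via phase rigidity, local surjectivity via HN on thin slices $\PP(I)$, persistence of the support property), and as a summary of the literature it is accurate; but for the purposes of this paper the theorem is simply quoted, not reproved.
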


Recall that the universal covering space $\tilde{\mathrm{GL}}^+_2(\R)$ of $\mathrm{GL}^+_2(\R)$ has a right action on $\Stab(\mathscr{T})$, defined as follows. For $\tilde{g}=(g,M) \in \tilde{\mathrm{GL}}^+_2(\R)$, where $g: \R \to \R$ is an increasing function such that $g(\phi+1)=g(\phi)+1$ and $M \in \mathrm{GL}^+_2(\R)$, and $\sigma=(\PP,Z) \in \Stab(\mathscr{T})$, we have that $\sigma \cdot \tilde{g}=(\PP',Z')$ is a stability condition with $Z'=M^{-1} \circ Z$ and $\PP'(\phi)=\PP(g(\phi))$ (see \cite[Lemma 8.2]{Bri}). We will sometimes use the notation $\PP'=\PP \cdot \tilde{g}$. Also the group of linear exact autoequivalences $\Aut(\mathscr{T})$ of $\mathscr{T}$ acts on the left of $\Stab(\mathscr{T})$ by $\Phi \cdot \sigma=(\Phi(\PP), Z \circ \Phi_*^{-1})$, where $\Phi \in \Aut(\mathscr{T})$ and $\Phi_*$ is the automorphism of $K(\mathscr{T})$ induced by $\Phi$.

The construction of Bridgeland stability conditions is in general a difficult task. However, starting from a weak stability condition $\sigma=(\A, Z)$ on $\mathscr{T}$, it is possible to produce a new heart of a bounded t-structure, by \textit{tilting} $\A$. Let us recall this method. Let $\mu \in \R$; we define the following subcategories of $\A$:
\begin{align*}
\TT^{\mu}_{\sigma}&:= \lbrace E \in \A: \text{all HN factors } F \text{ of }E \text{ have slope } \mu_{\sigma}(F)>\mu \rbrace\\
                  & = \langle E \in \A: E \text{ is } \sigma\text{-semistable with }\mu_{\sigma}(E) >\mu \rangle
\end{align*}
and 
\begin{align*}
\FF^{\mu}_{\sigma} &:= \lbrace E \in \A: \text{all HN factors } F \text{ of }E \text{ have slope } \mu_{\sigma}(F)\leq\mu \rbrace\\
                   &= \langle E \in \A: E \text{ is } \sigma\text{-semistable with }\mu_{\sigma}(E) \leq \mu \rangle.
\end{align*}
Here, the symbol $\langle - \rangle$ means the extension closure, i.e.\ the smallest full additive subcategory of $\T$ containing the objects in the brackets which is closed with respect to extensions. 

\begin{prop}[\cite{HRS}]
\label{prop_HRS}
The category
$$\A_{\sigma}^{\mu}:= \langle \TT^{\mu}_{\sigma},\FF^{\mu}_{\sigma}[1] \rangle$$
is the heart of a bounded t-structure on $\TT$.
\end{prop}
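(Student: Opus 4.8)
\emph{Proof idea.} This is the Happel--Reiten--Smal{\o} tilting construction associated to a torsion pair, whence the reference to \cite{HRS}; so the plan is to exhibit the relevant torsion pair in $\A$ and then to run the standard argument. The only input specific to weak stability conditions is that $(\TT^\mu_\sigma,\FF^\mu_\sigma)$ is a torsion pair in the abelian category $\A$, meaning $\Hom_{\A}(T,F)=0$ for all $T\in\TT^\mu_\sigma$, $F\in\FF^\mu_\sigma$, and that every $E\in\A$ sits in a short exact sequence $0\to T_E\to E\to F_E\to 0$ with $T_E\in\TT^\mu_\sigma$ and $F_E\in\FF^\mu_\sigma$; once this is in place the statement is purely formal.

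First I would establish the torsion pair. For the orthogonality, the key point is the ``see-saw'' property: if $A,B\in\A$ are $\sigma$-semistable with $\mu_\sigma(A)>\mu_\sigma(B)$, then $\Hom_{\A}(A,B)=0$, since a nonzero morphism $A\to B$ has image $I$, which is both a quotient of $A$ and a subobject of $B$, so semistability of $A$ and of $B$ would force $\mu_\sigma(A)\le\mu_\sigma(I)\le\mu_\sigma(B)$ --- with a separate but equally easy verification in the boundary case where $\Im Z$ vanishes and the slope is $+\infty$. Since $\TT^\mu_\sigma$ (resp.\ $\FF^\mu_\sigma$) is the extension closure of the $\sigma$-semistable objects of slope $>\mu$ (resp.\ $\le\mu$) and vanishing of $\Hom$ propagates through extensions in each variable, this gives $\Hom_{\A}(\TT^\mu_\sigma,\FF^\mu_\sigma)=0$. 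For the decomposition I would feed $E\in\A$ to its Harder--Narasimhan filtration $0=E_0\subset E_1\subset\cdots\subset E_n=E$, available by axiom (ii) of a weak stability condition, whose quotients $E_i/E_{i-1}$ are $\sigma$-semistable of strictly decreasing slopes $\mu_1>\cdots>\mu_n$; letting $k$ be the largest index with $\mu_k>\mu$, the subobject $E_k$ lies in $\TT^\mu_\sigma$ and the quotient $E/E_k$ lies in $\FF^\mu_\sigma$, giving the required sequence (with the evident conventions $E_k=0$ or $E_k=E$ when no such $k$ exists or $k=n$).

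With the torsion pair in hand, I would tilt in the standard way. Writing $H^i_{\A}\colon\mathscr{T}\to\A$ for the cohomology functors of the bounded t-structure with heart $\A$, set
\[
\begin{aligned}
\mathscr{D}^{\le 0}_{\sharp}&:=\{\,E\in\mathscr{T}: H^i_{\A}(E)=0 \text{ for } i>0,\ H^0_{\A}(E)\in\TT^\mu_\sigma\,\},\\
\mathscr{D}^{\ge 1}_{\sharp}&:=\{\,E\in\mathscr{T}: H^i_{\A}(E)=0 \text{ for } i<0,\ H^0_{\A}(E)\in\FF^\mu_\sigma\,\},
\end{aligned}
\]
and verify that $(\mathscr{D}^{\le 0}_{\sharp},\mathscr{D}^{\ge 1}_{\sharp}[1])$ is a bounded t-structure on $\mathscr{T}$. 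Its defining properties I would check as follows: the inclusions $\mathscr{D}^{\le 0}_{\sharp}[1]\subseteq\mathscr{D}^{\le 0}_{\sharp}$ and $\mathscr{D}^{\ge 1}_{\sharp}[-1]\subseteq\mathscr{D}^{\ge 1}_{\sharp}$ are immediate from the definitions; boundedness follows from that of the $\A$-t-structure via $\mathscr{D}^{\le -1}_{\A}\subseteq\mathscr{D}^{\le 0}_{\sharp}\subseteq\mathscr{D}^{\le 0}_{\A}$; the vanishing $\Hom_{\mathscr{T}}(\mathscr{D}^{\le 0}_{\sharp},\mathscr{D}^{\ge 1}_{\sharp})=0$ reduces, via the $\A$-truncation triangles and the standard identification $\Hom_{\mathscr{T}}(E,G)\cong\Hom_{\A}\big(H^0_{\A}(E),H^0_{\A}(G)\big)$ for $E\in\mathscr{D}^{\le 0}_{\A}$ and $G\in\mathscr{D}^{\ge 0}_{\A}$, to the torsion-pair vanishing $\Hom_{\A}(\TT^\mu_\sigma,\FF^\mu_\sigma)=0$; and for an arbitrary $E\in\mathscr{T}$ the truncation triangle $E'\to E\to E''$ with $E'\in\mathscr{D}^{\le 0}_{\sharp}$ and $E''\in\mathscr{D}^{\ge 1}_{\sharp}$ is built by two applications of the octahedral axiom, combining the $\A$-truncation triangle $\tau^{\le 0}_{\A}E\to E\to\tau^{\ge 1}_{\A}E$ with the torsion sequence $0\to T\to H^0_{\A}(E)\to F\to 0$ of $H^0_{\A}(E)$ (first replacing $\tau^{\le 0}_{\A}E$ by the fibre of $\tau^{\le 0}_{\A}E\to H^0_{\A}(E)\to F$, which lies in $\mathscr{D}^{\le 0}_{\sharp}$, then assembling). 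Finally, the heart $\mathscr{D}^{\le 0}_{\sharp}\cap\mathscr{D}^{\ge 1}_{\sharp}[1]$ consists exactly of the objects with cohomology concentrated in degrees $-1$ and $0$, with $H^{-1}_{\A}\in\FF^\mu_\sigma$ and $H^0_{\A}\in\TT^\mu_\sigma$; every such object is an extension of $H^0_{\A}(E)\in\TT^\mu_\sigma$ by $H^{-1}_{\A}(E)[1]\in\FF^\mu_\sigma[1]$, while conversely $\TT^\mu_\sigma$ and $\FF^\mu_\sigma[1]$ both lie in this heart, which is extension-closed; hence the heart coincides with $\langle\TT^\mu_\sigma,\FF^\mu_\sigma[1]\rangle=\A^\mu_\sigma$, as claimed.

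There is no real conceptual obstacle: the entire content lies in the torsion-pair verification, which follows directly from the existence of Harder--Narasimhan filtrations and the see-saw inequality. The two points most prone to slips are the octahedral diagram chase producing the truncation triangle --- where one must keep track of which cohomology object lands in the aisle and which in the coaisle --- and the treatment of the value $+\infty$ (the case $\Im Z=0$) in the see-saw argument; both are routine but merit a little care.
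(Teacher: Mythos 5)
Your proof is correct and follows exactly the route the paper intends: the paper gives no argument of its own but simply cites \cite{HRS}, and your proposal reconstructs that standard tilting argument (torsion pair from Harder--Narasimhan filtrations plus the see-saw orthogonality, then the Happel--Reiten--Smal{\o} t-structure). The one point deserving the care you already flag --- the $+\infty$ slopes arising when $\Im Z=0$ for a weak stability function --- works out consistently, so there is no gap.
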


\noindent We say that the heart $\A_{\sigma}^{\mu}$ is obtained by tilting $\A$ with respect to the weak stability condition $\sigma$ at the slope $\mu$. In Section \ref{section_wsX}, we will explain how to construct weak stability conditions on $\D(X)$ by tilting $\Coh(X)$ with respect to slope stability.

\subsection{Inducing stability conditions}

Let $\mathscr{T}$ be a triangulated category with Serre functor $S_{\mathscr{T}}$.
If $\{E_0,E_1,\cdots ,E_l\}$ is an exceptional collection in $\mathscr{T}$, then there exists a semiorthogonal decomposition of the form
$$
\mathscr{T}
=\langle \mathscr{D}_1, \mathscr{D}_2\rangle,
$$
where $\mathscr{D}_2:=\langle E_0,E_1,\cdots ,E_l\rangle$ and $\mathscr{D}_{1}:=\mathscr{D}_2^{\bot}$. The next proposition gives a criterion in order to induce a stability condition on $\mathscr{D}_1$ from a weak stability condition on $\mathscr{T}$.

\begin{prop}[{\cite[Proposition 5.1]{BLMS}}]
\label{prop_inducstab}
Let $\sigma=(\A, Z)$ be a weak stability condition on $\mathscr{T}$. 
Assume that the exceptional collection $\{E_0,E_1,\cdots ,E_l\}$ satisfies the following conditions: 
\begin{enumerate}
\item $E_i\in \A$;
\item $S_{\mathscr{T}}(E_i)\in \A[1]$; 
\item  $Z(E_i)\neq0$ for all $i=0,1, \cdots, l$.
\end{enumerate}
If moreover there are no objects $0\neq F\in \A_1:=\A\cap \mathscr{D}_1$ with $Z(F)=0$, i.e., $Z_1:=Z|_{\A_1}$ is a stability function on $\A_1$,
then the pair $\sigma_{1}=(\A_1,Z_1)$ is a stability condition on $\mathscr{D}_1$.
\end{prop}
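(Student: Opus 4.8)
The plan is to verify the three axioms of a Bridgeland stability condition for the pair $\sigma_1=(\A_1,Z_1)$ on $\mathscr{D}_1$, using the hypotheses to transfer structure from the weak stability condition $\sigma=(\A,Z)$ on $\mathscr{T}$. First I would check that $\A_1 = \A \cap \mathscr{D}_1$ is the heart of a bounded t-structure on $\mathscr{D}_1$. The vanishing $\Hom_{\le 0}$ property is inherited immediately from $\A$. The nontrivial part is the existence of the filtration: given $E \in \mathscr{D}_1$, one takes its decomposition coming from $\A$ (as a subcategory of $\mathscr{T}$) into shifted objects $A_i \in \A$, and must show each $A_i$ actually lies in $\A_1$. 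This is where conditions (1) and (2) enter: condition (1) ensures $\mathscr{D}_2 = \langle E_0,\dots,E_l\rangle$ is generated inside $\A$ by objects and their shifts, and the combination of (1), (2) lets one check, using the adjunction between the inclusion of $\mathscr{D}_1$ and the projection functor, that the cohomology objects (with respect to the t-structure with heart $\A$) of any object of $\mathscr{D}_1$ again lie in $\mathscr{D}_1$; more precisely, for $A \in \A$ and $E \in \mathscr{D}_1$, using $S_{\mathscr{T}}(E_i) \in \A[1]$ one controls $\Hom(E_i, A)$ and $\Hom(A, E_i)$ in the right degrees via Serre duality so that $\A_1$ is closed under the truncations inherited from $\A$. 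This is the argument of \cite[Proposition 5.1]{BLMS} and I would follow it closely.

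Next I would verify that $Z_1 = Z|_{\A_1}$ is a genuine stability function on $\A_1$: by hypothesis there are no nonzero $F \in \A_1$ with $Z(F)=0$, and since $Z$ is already a weak stability function on $\A$ (so $\Im Z \ge 0$ and $\Im Z = 0 \Rightarrow \Re Z \le 0$ on all of $\A$), restricting to $\A_1$ and combining with the no-kernel hypothesis upgrades the implication to $\Im Z_1(F) = 0 \Rightarrow \Re Z_1(F) < 0$, which is exactly the stability-function condition. The slope $\mu_{\sigma_1}$ is then well-defined on $\A_1$.

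For the Harder-Narasimhan property I would argue that HN filtrations in $\A_1$ exist because $\A_1$ is a Noetherian/Artinian-type situation is not guaranteed directly, so instead I would deduce HN filtrations from those in $\A$: any $\sigma$-semistable factor of $E \in \A_1$ that arises in its HN filtration with respect to $\sigma$ need not lie in $\A_1$, so the cleaner route is the standard one — show that $Z_1$ has discrete image or invoke the general existence criterion (e.g. the argument that a stability function on a heart satisfying the support property, with the lattice being finite rank, admits HN filtrations, as in Bridgeland's original work / \cite{BLMS}). Finally, for the support property, I would take the quadratic form $Q$ furnished for $\sigma$ on $\Lambda \otimes \R$ and restrict it: since $\sigma_1$-semistable objects of $\A_1$ are in particular objects of $\mathscr{D}_1 \subset \mathscr{T}$, and one checks they are $\sigma$-semistable in $\mathscr{T}$ (this uses that $\A_1 \subset \A$ is closed under subobjects and quotients in the relevant sense — again a consequence of (1),(2)), the inequality $Q(E) \ge 0$ is inherited, and $Q|_{\ker Z_1}$ is negative definite because $\ker Z_1 \subset \ker Z$.

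The main obstacle, as in \cite{BLMS}, is the first step: proving that $\A_1$ is the heart of a bounded t-structure, i.e. that the heart $\A$ restricts well to the semiorthogonal component $\mathscr{D}_1$. Conditions (1) and (2) are precisely calibrated so that the exceptional objects $E_i$ sit in $\A$ together with enough of their Serre twists to guarantee that the cohomology functors of $\A$ send $\mathscr{D}_1$ into itself; making this rigorous requires a careful bookkeeping of $\Hom$-vanishings in various degrees and an inductive argument on the length of the filtration, which is the technical heart of the proof.
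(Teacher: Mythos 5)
First, a point of reference: the paper does not prove this proposition at all --- it is quoted verbatim from \cite[Proposition 5.1]{BLMS} and used as a black box, so the only proof to compare against is the one in that reference. Your outline of the first two steps does match its strategy: conditions (1) and (2) combine with Serre duality to give $\Hom(E_i,A[k])\cong\Hom(A,S_{\mathscr{T}}(E_i)[-k])^{*}=0$ for $A\in\A$ and $k\notin\{0,1\}$, and an induction on the filtration of $F\in\mathscr{D}_1$ then shows that the cohomology objects of $F$ with respect to $\A$ again lie in $\mathscr{D}_1$, so that $\A_1$ is a heart of a bounded t-structure; the upgrade of the weak stability function to a genuine stability function on $\A_1$ is immediate from the extra hypothesis, as you say.

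The genuine gap is in your treatment of the support property (and, implicitly, of the HN property, which you partly lean on it for). You assert that a $\sigma_1$-semistable object of $\A_1$ is $\sigma$-semistable in $\mathscr{T}$ because ``$\A_1\subset\A$ is closed under subobjects and quotients.'' Both claims are false. A subobject in $\A$ of an object of $\A_1$ has no reason to lie in $\mathscr{D}_1$: the exceptional objects $E_i$ themselves lie in $\A$ but not in $\mathscr{D}_1$, and whenever $\Hom(E_i,F)\neq 0$ for some $F\in\A_1$ with $\mu_{\sigma}(E_i)>\mu_{\sigma}(F)$, the image of such a map destabilizes $F$ in $\A$ without affecting its $\sigma_1$-(semi)stability. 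What is true is only the reverse implication --- the inclusion $\A_1\hookrightarrow\A$ is exact, so subobjects taken in $\A_1$ are subobjects in $\A$; hence $\sigma$-semistable objects of $\A_1$ are $\sigma_1$-semistable (this is the direction the present paper uses, e.g.\ in Proposition \ref{lemma_idealsheafstab}), and it is this containment of subobject lattices, together with the discreteness of the image of $Z$ and Bridgeland's chain-condition criterion, that yields HN filtrations. For the support property one cannot simply restrict $Q$; in \cite{BLMS} the inequality $Q(F)\geq 0$ for a $\sigma_1$-semistable $F$ is deduced by controlling the $\sigma$-HN factors of $F$ in $\A$, whose slopes are confined to a bounded range determined by the finitely many slopes $\mu_{\sigma}(E_i)$ --- and this is precisely where condition (3), $Z(E_i)\neq 0$, enters. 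Your sketch never uses condition (3), which is itself a sign that this part of the argument is missing.
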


\subsection{Weak stability conditions on $\D(X)$}
\label{section_wsX}
Let $X$ be a smooth projective variety of dimension $n$ 
and $H$ be an ample divisor on $X$. Following \cite[Section 2]{BLMS}, we review the construction of weak stability conditions on $\D(X)$.

For any $j\in \{0,1, 2,\cdots ,n\}$, 
consider the lattice $\Lambda_{H}^{j}\cong \Z^{j+1}$ generated by 
$$
(H^{n}\ch_{0}, H^{n-1}\ch_{1}, \cdots, H^{n-j}\ch_{j})\in \Q^{j+1},
$$
with the surjective map $\upsilon_{H}^{j}: K(X)\rightarrow \Lambda_{H}^{j}$ induced by the Chern character.
Then the pair 
$$\sigma_{H}:=(\Coh(X), Z_{H}),$$
where $Z_H: \Lambda_H^1 \to \C$ is given by 
$$
Z_{H}(E):=-H^{n-1}\ch_{1}(E)+i H^{n}\ch_{0}(E),
$$
defines a weak stability condition on $\D(X)$ with respect to the lattice $\Lambda^1_H$ (see \cite[Example 2.8]{BLMS}). Note that the slope $\mu_H$ defined by the weak stability function $Z_H$ coincides with the classical notion of {\it slope stability}. Moreover, any $\mu_{H}$-semistable sheaf $E$ satifies the following Bogomolov-Gieseker inequality:
\begin{equation*}
\Delta_{H}(E):=(H^{n-1}\ch_{1}(E))^{2}-2H^{n}\ch_{0}(E)\cdot H^{n-2}\ch_{2}(E)\geq 0.
\end{equation*}

Given a parameter $\beta\in \R$,
we denote by 
$$\Coh^{\beta}(X)$$
the heart of a bounded $t$-structure obtained by tilting the weak stability condition $\sigma_H$ at the slope $\mu_H=\beta$. 
For $E \in \D(X)$, we set
$$\ch^{\beta}(E):=e^{-\beta}\ch(E).$$
Explicitly, the first three terms are
$$\ch_0^{\beta}(E):=\ch_0(E), \quad  \ch_1^{\beta}(E):=\ch_1(E)-\beta H \ch_0(E)$$
and
$$\ch_2^{\beta}(E):= \ch_2(E) -\beta H \ch_1(E) +\frac{\beta^2 H^2}{2}\ch_0(E).$$

\begin{prop}[\cite{BLMS}, Proposition 2.12]
\label{first-tilting-wsc}
For any $(\alpha, \beta)\in \R_{>0}\times \R$,
the pair 
$$\sigma_{\alpha, \beta}=(\emph{Coh}^{\beta}(X), Z_{\alpha, \beta})$$
with
$$
Z_{\alpha, \beta}(E)
:=\frac{1}{2}\alpha^2 H^{n}\ch_{0}^{\beta}(E)-H^{n-2}\ch_{2}^{\beta}(E)
+i H^{n-1}\ch_{1}^{\beta}(E)
$$
defines a weak stability condition on $\emph{D}^b(X)$ with respect to $\Lambda_{H}^{2}$. 
The quadratic form $Q$ can be given by the discriminant $\Delta_{H}$.
Moreover, these weak stability conditions vary continuously as $(\alpha, \beta)\in \R_{>0}\times \R$ varies.
\end{prop}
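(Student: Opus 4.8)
The plan is to verify, for $\sigma_{\alpha,\beta}=(\Coh^{\beta}(X),Z_{\alpha,\beta})$ with respect to $\Lambda_{H}^{2}$, the three axioms of a weak stability condition — that $Z_{\alpha,\beta}$ is a weak stability function on $\Coh^{\beta}(X)$, the existence of Harder-Narasimhan filtrations, and the support property with $Q=\Delta_{H}$ — and then the continuity statement. No separate argument is needed to see that $\Coh^{\beta}(X)$ is the heart of a bounded t-structure: it is by definition the tilt of the weak stability condition $\sigma_{H}$ at the slope $\mu_{H}=\beta$, hence a heart by Proposition~\ref{prop_HRS} applied to the torsion pair $(\TT^{\beta}_{\sigma_{H}},\FF^{\beta}_{\sigma_{H}})$, with $\Coh^{\beta}(X)=\langle\TT^{\beta}_{\sigma_{H}},\FF^{\beta}_{\sigma_{H}}[1]\rangle$.

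For the weak stability function condition, write $r:=H^{n}\ch_{0}(E)$, $c:=H^{n-1}\ch_{1}(E)$, $d:=H^{n-2}\ch_{2}(E)$, so that $\Im Z_{\alpha,\beta}(E)=H^{n-1}\ch_{1}^{\beta}(E)=c-\beta r$. On $\TT^{\beta}_{\sigma_{H}}$ and on $\FF^{\beta}_{\sigma_{H}}[1]$ one has $H^{n-1}\ch_{1}^{\beta}\ge 0$ directly from the definition of the tilt, so by additivity $\Im Z_{\alpha,\beta}\ge 0$ on all of $\Coh^{\beta}(X)$. When $\Im Z_{\alpha,\beta}(E)=0$ the object $E$ is built from a torsion sheaf $T$ supported in codimension $\ge 2$ (the part of $\TT^{\beta}_{\sigma_{H}}$ with $H^{n-1}\ch_{1}^{\beta}=0$) and a shift $F[1]$ with $F$ a $\mu_{H}$-semistable torsion-free sheaf of slope exactly $\beta$. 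For $T$ one has $r=c=0$ and $H^{n-2}\ch_{2}(T)\ge 0$, so $\Re Z_{\alpha,\beta}(T)=-H^{n-2}\ch_{2}(T)\le 0$; for $F$ one has $H^{n-1}\ch_{1}^{\beta}(F)=0$ and $\ch_{0}(F)>0$, so the Bogomolov-Gieseker inequality $\Delta_{H}(F)\ge 0$ recalled just before the statement forces $H^{n-2}\ch_{2}^{\beta}(F)\le 0$, whence $\Re Z_{\alpha,\beta}(F[1])=-\tfrac12\alpha^{2}H^{n}\ch_{0}^{\beta}(F)+H^{n-2}\ch_{2}^{\beta}(F)\le 0$. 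Hence $Z_{\alpha,\beta}$ is a weak stability function — and only a weak one, since skyscraper sheaves of points lie in $\Coh^{\beta}(X)$ with $Z_{\alpha,\beta}=0$.

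The existence of Harder-Narasimhan filtrations I would deduce from the general criterion of \cite{Bri}: the lattice $\Lambda_{H}^{2}$ has finite rank, the image of $\Im Z_{\alpha,\beta}$ is discrete, and $\Coh^{\beta}(X)$ has the Noetherian-type finiteness needed to bound chains of subobjects of bounded mass (cf.\ \cite{BMT}). For the support property, negative-definiteness of $Q=\Delta_{H}$ on $\ker Z_{\alpha,\beta}$ is an explicit computation: in the coordinates $(r,c,d)$ the kernel is the line $\{(r,\beta r,\tfrac12(\alpha^{2}+\beta^{2})r):r\in\R\}$, on which $\Delta_{H}=c^{2}-2rd=(\beta^{2}-\alpha^{2}-\beta^{2})r^{2}=-\alpha^{2}r^{2}<0$.

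The main obstacle is the remaining half of the support property, namely the first-tilt Bogomolov-Gieseker inequality $\Delta_{H}(E)\ge 0$ for every $\sigma_{\alpha,\beta}$-semistable $E$. I would prove this by reduction to sheaves: an object $E\in\Coh^{\beta}(X)$ has at most two nonzero cohomology sheaves $\HH^{-1}(E)\in\FF^{\beta}_{\sigma_{H}}$ and $\HH^{0}(E)\in\TT^{\beta}_{\sigma_{H}}$ with respect to $\Coh(X)$; when $E$ is (a shift of) a $\mu_{H}$-semistable sheaf the classical inequality applies directly, and in general one uses openness of $\sigma_{\alpha,\beta}$-semistability together with local finiteness of walls in the $(\alpha,\beta)$-half-plane to deform to the large-$\alpha$ chamber, where $\sigma_{\alpha,\beta}$-semistable objects are governed by $\mu_{H}$-semistability of their cohomology sheaves and $\Delta_{H}$ is controlled on the relevant short exact sequences; since $\Delta_{H}$ of the numerical class is unchanged under such a deformation, the inequality propagates back. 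Finally, continuity of $(\alpha,\beta)\mapsto\sigma_{\alpha,\beta}$ over $\R_{>0}\times\R$ follows, by \cite{Bri,BMT}, from the polynomial dependence of $Z_{\alpha,\beta}$ on the parameters, the compatible variation of the hearts $\Coh^{\beta}(X)$ as $\beta$ crosses a slope of a $\mu_{H}$-semistable sheaf, and the uniform support property established above.
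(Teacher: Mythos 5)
The paper offers no proof of this statement: it is quoted verbatim from \cite[Proposition 2.12]{BLMS}, which in turn rests on \cite{BMT} and \cite{BMS}, so there is no in-paper argument to compare against. Your reconstruction follows exactly the route of those references: tilting via Proposition \ref{prop_HRS} to get the heart, the classical Bogomolov--Gieseker inequality to handle the slope-$\beta$ part of $\Coh^{\beta}(X)$ in the weak-stability-function axiom, the explicit computation that $\ker Z_{\alpha,\beta}$ is the line $\{(r,\beta r,\tfrac{1}{2}(\alpha^{2}+\beta^{2})r)\}$ on which $\Delta_{H}=-\alpha^{2}r^{2}$, and the large-volume reduction for $\Delta_{H}\ge 0$ on semistable objects. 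Those steps, as far as you spell them out, are correct.

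The one step with a real gap is the last one, and as phrased it is circular: local finiteness of walls for $\sigma_{\alpha,\beta}$ is itself a consequence of the support property (it is deduced from negative definiteness of $Q$ on $\ker Z$ together with $Q\ge 0$ on semistable objects), so you cannot invoke it to deform a semistable object into the large-$\alpha$ chamber before knowing $\Delta_{H}\ge 0$. The actual argument in \cite{BMS} is an induction on the discrete quantity $H^{n-1}\ch_{1}^{\beta}(E)$ (carried out first for rational $\beta$, then extended to arbitrary $\beta$ by deformation --- the same rational-versus-irrational issue also affects your appeal to discreteness of $\Im Z_{\alpha,\beta}$ in the HN-filtration step): either $E$ stays semistable as $\alpha\to\infty$, where one reduces to $\mu_{H}$-semistable sheaves and torsion sheaves, or it becomes strictly semistable at some finite $\alpha'$ with Jordan--H\"older factors of strictly smaller $\Im Z$; the inductive hypothesis gives $\Delta_{H}\ge 0$ for the factors, and the key convexity lemma --- if $0\to A\to E\to B\to 0$ with $\mu_{\alpha,\beta}(A)=\mu_{\alpha,\beta}(B)$ and $\Delta_{H}(A),\Delta_{H}(B)\ge 0$, then $\Delta_{H}(E)\ge\Delta_{H}(A)+\Delta_{H}(B)$ --- closes the induction. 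Your phrase that ``$\Delta_{H}$ is controlled on the relevant short exact sequences'' gestures at this lemma, but it is the crux of the whole proof and must be stated and proved (it is a linear-algebra fact about quadratic forms of signature $(1,\rho)$ restricted to the plane spanned by the destabilizing classes); with it in place the rest of your outline is the standard and correct argument.
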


By definition the slope with respect to $Z_{\alpha,\beta}$ is
$$\mu_{\alpha,\beta}(E)=-\frac{\Re Z_{\alpha,\beta}(E)}{\Im Z_{\alpha,\beta}(E)} \quad \text{for }\Im Z_{\alpha,\beta}(E) \neq 0.$$
We can visualize the weak stability conditions $\sigma_{\alpha,\beta}$ in the upper half plane $$\lbrace (\alpha,\beta) \in \R \times \R: \alpha>0 \rbrace.$$
\begin{dfn}
Let $v$ be a vector in $\Lambda_H^2$. 
\begin{enumerate}
\item A \textit{numerical wall} for $v$ is the set of pairs $(\alpha,\beta) \in \R_{>0} \times \R$ such that there is a vector $w \in \Lambda_H^2$ verifying the numerical relation
$\mu_{\alpha,\beta}(v)=\mu_{\alpha,\beta}(w)$.
\item A \textit{wall} for $F \in \Coh^\beta(X)$ is a numerical wall for $v:=\ch_{\leq 2}(F)$, where $\ch_{\leq 2}(F):=(\ch_0(F),\ch_1(F),\ch_2(F))$, such that for every $(\alpha,\beta)$ on the wall there is an exact sequence of semistable objects $0 \to E \to F \to G \to 0$ in $\Coh^{\beta}(X)$ such that $\mu_{\alpha,\beta}(F)=\mu_{\alpha,\beta}(E)=\mu_{\alpha,\beta}(G)$ gives rise to the numerical wall.
\item  A \textit{chamber} is a connected component in the complement of the union of walls in the upper half plane. 
\end{enumerate}
\end{dfn}

\noindent A key property is that the weak stability conditions $\sigma_{\alpha,\beta}$ satisfy well-behaved wall-crossing: walls with respect to a class $v \in \Lambda_H^2$ are locally finite. In particular, if $v=\ch_{\leq 2}(E)$ with $E \in \Coh^{\beta}(X)$, then the stability of $E$ remains unchanged as $(\alpha,\beta)$ varies in a chamber by \cite[Proposition B.5]{BMS}.

We end this section by recalling the following variant of the weak stability conditions of Proposition \ref{first-tilting-wsc}, which will be used in the next sections. Fix $\mu\in \R$ and let $u$ be the unit vector in the upper half plane with $\mu=-\frac{\Re u}{\Im u}$. We denote by
$$
\Coh_{\alpha, \beta}^{\mu}(X)
$$
the heart obtained by tilting the weak stability condition $\sigma_{\alpha, \beta}=(\Coh^{\beta}(X), Z_{\alpha, \beta})$ at the slope $\mu_{\alpha,\beta}=\mu$.
\begin{prop}[{\cite[Proposition 2.15]{BLMS}}]
\label{prop_secondtiltstab}
The pair $\sigma_{\alpha, \beta}^{\mu}:=(\emph{Coh}_{\alpha, \beta}^{\mu}(X), Z_{\alpha, \beta}^{\mu})$, where 
$$
Z_{\alpha, \beta}^{\mu}:=\frac{1}{u} Z_{\alpha, \beta},
$$
is a weak stability condition on $\emph{D}^b(X)$.
\end{prop}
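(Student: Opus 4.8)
The plan is to read the passage from $\sigma_{\alpha,\beta}$ to $\sigma^\mu_{\alpha,\beta}$ as a simultaneous rotation of the central charge by the argument of $u$ together with a Happel--Reiten--Smal{\o} tilt of the heart at the slope $\mu$, and to check the three axioms of a weak stability condition one at a time. The heart $\Coh^\mu_{\alpha,\beta}(X)=\langle \TT^\mu_{\sigma_{\alpha,\beta}},\FF^\mu_{\sigma_{\alpha,\beta}}[1]\rangle$ is that of a bounded $t$-structure by Proposition \ref{prop_HRS}, so only the weak stability function property, the existence of Harder--Narasimhan filtrations, and the support property remain to be verified. Throughout I write $u=e^{i\theta}$ with $\theta\in(0,\pi)$, so that $\mu=-\cot\theta$ and $\cos\theta=-\mu\sin\theta$.

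The central computation is that for every $E\in\Coh^\beta(X)$ with $\Im Z_{\alpha,\beta}(E)>0$ one has
$$\Im Z^\mu_{\alpha,\beta}(E)=\sin\theta\,\Im Z_{\alpha,\beta}(E)\,\bigl(\mu_{\alpha,\beta}(E)-\mu\bigr),$$
so that the imaginary part of the rotated charge reads off the sign of $\mu_{\alpha,\beta}(E)-\mu$. Hence a $\sigma_{\alpha,\beta}$-semistable generator of $\TT^\mu$ (slope $>\mu$) has $\Im Z^\mu_{\alpha,\beta}>0$; a semistable object of slope $+\infty$ either has $\Im Z^\mu_{\alpha,\beta}>0$ or satisfies $Z_{\alpha,\beta}=Z^\mu_{\alpha,\beta}=0$; and a semistable generator of $\FF^\mu$ (slope $\le\mu$, hence of strictly positive imaginary part) has $\Im Z^\mu_{\alpha,\beta}\le 0$. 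Given $0\neq E\in\Coh^\mu_{\alpha,\beta}(X)$, the tilting torsion pair yields a short exact sequence $0\to F[1]\to E\to T\to 0$ with $T\in\TT^\mu$ and $F\in\FF^\mu$; summing these sign estimates over the semistable factors gives $\Im Z^\mu_{\alpha,\beta}(T)\ge 0$ and $\Im Z^\mu_{\alpha,\beta}(F)\le 0$, whence $\Im Z^\mu_{\alpha,\beta}(E)=\Im Z^\mu_{\alpha,\beta}(T)-\Im Z^\mu_{\alpha,\beta}(F)\ge 0$. If this vanishes, both summands vanish, which forces every factor of $T$ to have $Z_{\alpha,\beta}=0$ and every factor $F_j$ of $F$ to have slope exactly $\mu$; for the latter a direct computation gives $\Re Z^\mu_{\alpha,\beta}(F_j)=\Im Z_{\alpha,\beta}(F_j)\sin\theta\,(1+\mu^2)>0$, so that $\Re Z^\mu_{\alpha,\beta}(E)=-\Re Z^\mu_{\alpha,\beta}(F)\le 0$. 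This is precisely the weak stability function property.

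For the remaining two axioms I would identify $\sigma^\mu_{\alpha,\beta}$ with the image of $\sigma_{\alpha,\beta}$ under the element $\tilde g=(g,M)\in\tilde{\mathrm{GL}}^+_2(\R)$ with $M$ the rotation by $\theta$ and $g(\phi)=\phi+\theta/\pi$: one checks $Z^\mu_{\alpha,\beta}=M^{-1}\circ Z_{\alpha,\beta}=u^{-1}Z_{\alpha,\beta}$, and that the relabelled slicing has heart $\PP((\theta/\pi,1+\theta/\pi])=\langle\TT^\mu,\FF^\mu[1]\rangle=\Coh^\mu_{\alpha,\beta}(X)$. Since this action (as in \cite[Lemma 8.2]{Bri}) merely relabels phases and applies an invertible linear map to the charge, it preserves the existence of Harder--Narasimhan filtrations, so these are inherited from $\sigma_{\alpha,\beta}$. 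The support property transfers with the same quadratic form $Q=\Delta_H$: one has $\ker Z^\mu_{\alpha,\beta}=\ker Z_{\alpha,\beta}$ because $u^{-1}\neq 0$, so $Q$ remains negative definite on $\ker Z^\mu_{\alpha,\beta}$, and the $\sigma^\mu_{\alpha,\beta}$-semistable objects are, up to shift, the $\sigma_{\alpha,\beta}$-semistable ones, on which $Q\ge 0$ already holds; here one uses that $\Delta_H$ is invariant under shift, being homogeneous of degree two in the Chern character.

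The step I expect to be most delicate is the bookkeeping in the weak stability function verification: because $\sigma_{\alpha,\beta}$ is only a \emph{weak} stability condition, nonzero objects with $Z_{\alpha,\beta}=0$ and semistable objects of slope $+\infty$ must be carried through the rotation, and one must confirm that every inequality emerges in the non-strict form demanded of a weak stability function rather than the strict form. A second, more formal point requiring care is that the $\tilde{\mathrm{GL}}^+_2(\R)$-action, stated in the excerpt for honest stability conditions, is compatible with the tilting construction and continues to send weak stability conditions to weak stability conditions; granting this, the transfer of the Harder--Narasimhan and support axioms in the previous paragraph is immediate.
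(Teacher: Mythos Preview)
The paper does not give its own proof of this proposition; it is quoted verbatim from \cite[Proposition~2.15]{BLMS} and used as a black box, so there is nothing in the paper to compare your argument against.

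Your outline is the standard one and is essentially what the proof in \cite{BLMS} does. One point deserves sharpening: you assert that the $\sigma^\mu_{\alpha,\beta}$-semistable objects are, up to shift, exactly the $\sigma_{\alpha,\beta}$-semistable ones, and use this to transfer both the HN and support axioms. For a genuine stability condition this is automatic from the $\tilde{\mathrm{GL}}^+_2(\R)$-action, but here $\sigma_{\alpha,\beta}$ is only weak, and the paper itself (in the proof of Lemma~\ref{prop_tilt}, citing \cite[Proof of Proposition~2.15]{BLMS}) records that the two notions of semistability agree only \emph{up to objects with vanishing central charge}---for threefolds, sheaves supported on points. You flag this yourself in your final paragraph, and it is indeed the only place where care is needed. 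The fix is easy: such objects have trivial image in $\Lambda^2_H$, hence $\Delta_H=0$, so the support inequality is unaffected; and for the HN property one checks directly that these zero-charge objects, which sit at phase $1$ in both hearts, can be absorbed into the filtration without obstruction. With that bookkeeping made explicit, your argument is complete.
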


\section{Fano threefolds of Picard rank $1$ and index $2$}

In this section we explain how to induce stability conditions on the Kuznetsov component of a Fano threefold of Picard rank one and index two, as proved in \cite[Section 6]{BLMS}. Then we prove that these induced stability conditions are in the same orbit with respect to the $\tilde{\mathrm{GL}}^+_2(\R)$-action on $\Stab(\Ku(X))$. 

\subsection{Classification and Kuznetsov component}
\label{sec_classific}
Let $X$ be a Fano threefold with $\text{Pic}(X)\cong \Z$. Assume that $X$ has index $2$, i.e.\ $K_X=-2H$, where $H$ is a positive generator of $\text{Pic}(X)$. The degree of $X$ is $d:=H^3$. Recall the following classification result.

\begin{thm}[\cite{Kuz}, Theorem 2.3, \cite{IP}]
If $X$ is a Fano threefold with Picard rank $1$ and index $2$, then $1 \leq d \leq 5$. For each $d$, the deformation class is unique and there is the following explicit description:
\begin{itemize}
\item if $d=5$, then $X=\emph{Gr}(2,5) \cap \P^6 \subset \P^9$;
\item if $d=4$, then $X$ is an intersection of two $4$-dimensional quadrics in $\P^5$;
\item if $d=3$, then $X$ is a cubic hypersurface in $\P^4$;
\item if $d=2$, then $X \to \P^3$ is a double covering ramified in a quartic;
\item if $d=1$, then $X$ is a hypersurface of degree $6$ in the weighted projective space $\P(3, 2, 1, 1, 1)$.
\end{itemize}
\end{thm}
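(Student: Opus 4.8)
The plan is to translate the classification into the projective geometry of the half-anticanonical system $|H|$ and then appeal to the classification of del Pezzo varieties. First I would record the numerics. Since $X$ is Fano we have $H^i(X,\OO_X)=0$ for $i>0$, so $\chi(\OO_X)=1$; together with $K_X=-2H$, Hirzebruch--Riemann--Roch forces $H\cdot c_2(X)=12$, and because $mH=K_X+(m+2)H$ is a twist of $K_X$ by an ample class for every $m\ge-1$, Kodaira vanishing gives $h^0(X,mH)=\chi(\OO_X(mH))=\tfrac{d}{6}m(m+1)(m+2)+m+1$ in that range; in particular $h^0(X,H)=d+2$ and $h^0(X,2H)=4d+3$. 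Hence the polarized threefold $(X,H)$ has $\Delta$-genus $\Delta(X,H)=\dim X+H^3-h^0(X,H)=1$, and a general curve section $C=S_1\cap S_2$ with $S_1,S_2\in|H|$ has $K_C=(K_X+2H)|_C=\OO_C$, so $C$ is an elliptic curve: $(X,H)$ is a del Pezzo threefold. A general surface section $S\in|H|$ is then a del Pezzo surface with $-K_S=H|_S$ and $(-K_S)^2=d$, so $d\le 9$; by the classification of del Pezzo varieties (Fujita; Iskovskikh in dimension three) in fact $d\le 8$, and the remaining cases $6\le d\le 8$ occur only for $(\P^1)^3$ and the flag threefold ($d=6$), $\Bl_{\mathrm{pt}}\P^3$ ($d=7$), and $\P^3$ with $H=\OO(2)$ ($d=8$), each of which has Picard rank $\ge 2$ or $H$ not primitive. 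The hypothesis $\mathrm{Pic}(X)\cong\Z\langle H\rangle$ therefore forces $1\le d\le 5$.

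Next I would treat each degree by analysing $|H|$. For $d\ge 3$ one shows $H$ is very ample — this is the technical heart — by first establishing base-point-freeness of $|H|$ and then separation of points and tangent vectors, using that a general $S\in|H|$ is a del Pezzo surface (whose anticanonical bundle is very ample for $d\ge 3$) and a general curve section is a linearly normal elliptic curve of degree $d$ in $\P^{d-1}$. Thus $X$ embeds as a linearly normal, nondegenerate threefold of degree $d$ in $\P^{d+1}$ with $\Delta(X,H)=1$. Now a nondegenerate irreducible threefold of degree $3$ in $\P^4$ is a cubic hypersurface; one of degree $4$ in $\P^5$ lies on the pencil of quadrics $H^0(\II_X(2))$ (of dimension $\binom{7}{2}-h^0(\OO_X(2))=21-19=2$) and equals their complete intersection; and one of degree $5$ in $\P^6$ is a codimension-$3$ linear section of $\G(2,5)\subset\P^9$, its homogeneous ideal being generated by the $4\times 4$ Pfaffians of a $5\times 5$ skew-symmetric matrix of linear forms (Buchsbaum--Eisenbud structure theorem). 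For $d=2$, $|H|$ is base-point-free but not very ample and defines a finite morphism $\varphi\colon X\to\P^3$ of degree $H^3=2$; writing $\varphi_*\OO_X=\OO_{\P^3}\oplus L^{-1}$ with branch divisor $B\in|L^{\otimes 2}|$, the ramification formula $K_X=\varphi^*(K_{\P^3}\otimes L)$ together with $K_X=-2H=\varphi^*\OO_{\P^3}(-2)$ and injectivity of $\varphi^*$ on Picard groups gives $L=\OO_{\P^3}(2)$, so $X\to\P^3$ is the double cover branched over a quartic surface. For $d=1$, $h^0(X,H)=3$ and $|H|$ has a single base point; passing to $|2H|$ (base-point-free, $h^0(X,2H)=7$) realises $X$ as a double cover of the cone over the Veronese surface $v_2(\P^2)\subset\P^5$, equivalently — by computing the generators and relations of the graded ring $\bigoplus_{m\ge 0}H^0(X,mH)$ (three generators in degree $1$, one in degree $2$, one in degree $3$, and a single relation in degree $6$) — a hypersurface of degree $6$ in $\P(3,2,1,1,1)$ with $H=\OO(1)$.

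Finally I would check that for each $d$ the deformation class is unique. For $d\in\{1,2,3,4\}$ this is immediate, since smooth sextics in $\P(3,2,1,1,1)$, smooth quartic surfaces in $\P^3$ (hence their double covers), smooth cubics in $\P^4$, and smooth $(2,2)$-complete intersections in $\P^5$ are each parametrised by a Zariski-dense open subset of an irreducible projective space of forms; and for $d=5$ one has the stronger statement that there is, up to isomorphism, a unique such threefold, any two smooth codimension-$3$ linear sections of $\G(2,5)$ being projectively equivalent. I expect the main obstacle to be precisely the two inputs flagged above: the very-ampleness and base-point analysis of $|H|$ for small $d$ (equivalently, placing $(X,H)$ in Fujita's classification of del Pezzo varieties, which also supplies the bound $d\le 8$), and the projective classification in degrees $4$ and especially $5$, where one must recognise a del Pezzo threefold of degree $5$ as a Grassmannian section through the Pfaffian structure of its defining ideal. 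By contrast Riemann--Roch, the vanishing theorems, the double-cover description for $d=2$, the weighted-hypersurface description for $d=1$, and the irreducibility of the moduli of each type are routine. A shortcut is simply to import Fujita's classification of del Pezzo varieties wholesale, after which the Picard-rank-one, index-two hypothesis selects exactly the five families listed.
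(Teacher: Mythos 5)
The paper does not prove this statement: it is imported verbatim from \cite[Theorem 2.3]{Kuz} and \cite{IP}, so there is no internal argument to compare against. Your sketch is essentially the standard proof underlying those references --- reduce to Fujita's and Iskovskikh's classification of del Pezzo threefolds via the $\Delta$-genus computation $\Delta(X,H)=3+d-h^0(H)=1$, bound $d$ by the degree of a del Pezzo surface section and discard $6\le d\le 8$ using the Picard rank one and primitivity hypotheses, then analyse $|H|$ degree by degree --- and the numerics ($H\cdot c_2=12$, $h^0(mH)=\tfrac{d}{6}m(m+1)(m+2)+m+1$, $h^0(\II_X(2))=2$ for $d=4$, the branch quartic for $d=2$, the weights $(3,2,1,1,1)$ for $d=1$) all check out. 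You correctly identify where the genuine work lies, namely base-point-freeness and very ampleness of $|H|$ in low degree and the Pfaffian/Grassmannian recognition in degree $5$; these are exactly the points one would in practice delegate to Fujita and to Iskovskikh--Prokhorov, as the paper does.
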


By \cite[Corollary 3.5]{Kuz}, the bounded derived category of coherent sheaves on $X$ has a semiorthogonal decomposition of the form
$$
\D(X)=\langle \Ku(X), \mathcal{O}_{X}, \mathcal{O}_{X}(H)\rangle,
$$
where $\Ku(X)$ is called the \emph{Kuznetsov component} of $X$. 

\begin{rmk}
\label{rmk_d=4or5_first}
For $d=4$ or $5$ the Kuznetsov component has an explicit description. Indeed, if $d=4$, then $\Ku(X)\cong \D(C)$,
where $C$ is a smooth curve of genus $2$ (cf.\ \cite[Theorem 4.4]{Kuz}). If $d=5$, then $\Ku(X)\cong \D(K(3))$,
where $K(3)$ is the Kronecker quiver with three arrows (cf.\ \cite[Theorem 4.2]{Kuz}).
\end{rmk}

\subsection{Stability conditions on the Kuznetsov component}
In this section, we apply Proposition \ref{prop_inducstab} to induce stability conditions on $\Ku(X)$ of a Fano threefold $X$ of index $2$ from the weak stability conditions $\sigma_{\alpha,\beta}^0$ of Proposition \ref{prop_secondtiltstab}. This computation appeared in \cite[Theorem 6.8]{BLMS}. Here we explicit the values of $\alpha$ and $\beta$ for which the inducing method works. 
We set $\A(\alpha, \beta):=\Coh_{\alpha, \beta}^{0}(X)\cap \Ku(X)$ and $Z(\alpha, \beta):=Z_{\alpha, \beta}^{0}|_{\Ku(X)}$, 
where $Z_{\alpha, \beta}^{0}=-i Z_{\alpha, \beta}$. We define the lattice 
$$\Lambda_{H, \Ku(X)}^{2}:= \mbox{Im}(K(\Ku(X)) \to K(X) \to \Lambda_H^2) \cong \Z^{2}.$$

\begin{thm}[{\cite[Theorem 6.8]{BLMS}}]
\label{thm_U}
Suppose $-\frac{1}{2} \leq \beta < 0$, $0< \alpha < -\beta$, or $-1 < \beta <-\frac{1}{2}$, $\alpha \leq 1+\beta$. Then the pair
\begin{equation*}
\sigma(\alpha, \beta):=(\A(\alpha, \beta), Z(\alpha, \beta))
\end{equation*}
is a Bridgeland stability condition on $\Ku(X)$ 
with respect to $\Lambda_{H, \Ku(X)}^{2}\cong \Z^{2}$.
\end{thm}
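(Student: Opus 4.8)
The plan is to verify the hypotheses of Proposition \ref{prop_inducstab} for the exceptional collection $\{\OO_X, \OO_X(H)\}$ inside $\mathscr{T}=\D(X)$, using as weak stability condition the tilted one $\sigma_{\alpha,\beta}^0=(\Coh_{\alpha,\beta}^0(X), Z_{\alpha,\beta}^0)$ from Proposition \ref{prop_secondtiltstab}. Concretely, one needs: (1) $\OO_X, \OO_X(H) \in \Coh_{\alpha,\beta}^0(X)$; (2) $S_{\D(X)}(\OO_X), S_{\D(X)}(\OO_X(H)) \in \Coh_{\alpha,\beta}^0(X)[1]$, where $S_{\D(X)}(-) = (-)\otimes\OO_X(-2H)[3]$ by Serre duality on the Fano threefold; (3) $Z_{\alpha,\beta}^0(\OO_X) \neq 0$ and $Z_{\alpha,\beta}^0(\OO_X(H)) \neq 0$; and finally (4) there is no nonzero $F \in \A(\alpha,\beta)=\Coh_{\alpha,\beta}^0(X)\cap\Ku(X)$ with $Z_{\alpha,\beta}^0(F)=0$, i.e.\ the restricted function is an honest stability function on $\A(\alpha,\beta)$. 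The conclusion that $\sigma(\alpha,\beta)$ is a Bridgeland stability condition on $\Ku(X)$ then follows verbatim from Proposition \ref{prop_inducstab}, once support property is addressed.

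First I would pin down membership in the tilted hearts. The line bundles $\OO_X$ and $\OO_X(H)$ are $\mu_H$-stable, hence lie in $\Coh^\beta(X)$ precisely when their $\mu_H$-slopes exceed $\beta$: $\mu_H(\OO_X)=0 > \beta$ and $\mu_H(\OO_X(H))=1>\beta$, which holds for all $\beta \in (-1,0)$, covering the range in the statement. Next, to land in $\Coh^0_{\alpha,\beta}(X)$ (the tilt of $\Coh^\beta(X)$ at slope $\mu_{\alpha,\beta}=0$), one computes the $\mu_{\alpha,\beta}$-slopes of $\OO_X(kH)$ for $k=-2,-1,0,1$ from the formula $Z_{\alpha,\beta}$ in Proposition \ref{first-tilting-wsc}; using $\ch^\beta$ of a line bundle one gets $\mu_{\alpha,\beta}(\OO_X(kH))$ as an explicit function of $\alpha,\beta$. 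The condition for $\OO_X,\OO_X(H)$ to be in the tilt is that they are $\sigma_{\alpha,\beta}$-semistable with positive slope (so they survive into $\TT$), and the Serre-functor condition amounts to $\OO_X(-2H)$ and $\OO_X(-H)$ being $\sigma_{\alpha,\beta}$-semistable with slope $\le 0$ (so they survive into $\FF$, hence $\FF[1]$, after the shift $[3]$—careful bookkeeping of the shift parity is needed). These slope inequalities, after simplification, are exactly what carve out the region $V$: the two branches $-\tfrac12\le\beta<0,\ \alpha<-\beta$ and $-1<\beta<-\tfrac12,\ \alpha\le 1+\beta$ are the loci where all four line bundles have the correct sign of slope and, crucially, where $\OO_X$, $\OO_X(H)$ (equivalently $\OO_X(-H)$, $\OO_X(-2H)$ after tensoring) are not destabilized—this is where one invokes the classification of walls and the Bogomolov–Gieseker-type inequalities, or alternatively a direct check that no subobject in $\Coh^\beta(X)$ can destabilize a line bundle in that chamber. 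Condition (3) is immediate since $Z_{\alpha,\beta}^0$ of a line bundle has nonzero imaginary or real part throughout the region.

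The main obstacle is step (4): showing $Z(\alpha,\beta)$ is a genuine stability function on $\A(\alpha,\beta)$, i.e.\ no nonzero object of the Kuznetsov component sitting in the double-tilted heart is sent to $0$. The function $Z_{\alpha,\beta}^0$ on $\Lambda^2_{H}$ vanishes only on a rank-one sublattice (a line through the origin), so one must rule out that any object $F\in\A(\alpha,\beta)$ has $\ch_{\le 2}(F)$ proportional to that kernel direction. The strategy is: such an $F$ would be $\sigma_{\alpha,\beta}^0$-semistable of maximal phase inside $\Coh^0_{\alpha,\beta}(X)$, and by the support property for $\sigma_{\alpha,\beta}$ (with $Q=\Delta_H$, from Proposition \ref{first-tilting-wsc}) its class is constrained; combined with the defining vanishing $\Hom(\OO_X,F[p])=\Hom(\OO_X(H),F[p])=0$, one derives that the only possible such class is $0$, arguing as in \cite[Lemma 6.7 and Theorem 6.8]{BLMS}. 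Finally, the support property for the induced $\sigma(\alpha,\beta)$ on $\Ku(X)$ is inherited: one restricts the quadratic form $\Delta_H$ to $\Lambda^2_{H,\Ku(X)}$ and checks it is negative definite on $\ker Z(\alpha,\beta)$, which here is trivial by step (4), and nonnegative on semistable objects since these are in particular $\sigma_{\alpha,\beta}$-semistable objects of $\D(X)$. Assembling (1)–(4) and the support property and quoting Proposition \ref{prop_inducstab} completes the proof.
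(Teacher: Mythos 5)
Your proposal is correct and follows essentially the same route as the paper: both verify the hypotheses of Proposition \ref{prop_inducstab} for the pair $\{\OO_X,\OO_X(H)\}$ with respect to $\sigma^0_{\alpha,\beta}$, deduce the region $V$ from the sign conditions on $\mu_{\alpha,\beta}$ of the four line bundles $\OO_X,\OO_X(H),\OO_X(-2H)[1],\OO_X(-H)[1]$ (whose $\sigma_{\alpha,\beta}$-stability for all $\alpha>0$ the paper gets directly from \cite[Corollary 3.11 (a)]{BMS} rather than from a wall analysis), and handle the no-vanishing-central-charge condition by invoking the relevant lemma of \cite{BLMS} (the paper cites \cite[Lemma 2.16]{BLMS}).
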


\begin{proof}
Note that $\mathcal{O}_{X}, \mathcal{O}_{X}(H)\in \Coh(X)$; applying the Serre functor of $\D(X)$ we get
$$
S_{X}(\mathcal{O}_{X})=\mathcal{O}_{X}(K_{X})[3]=\mathcal{O}_{X}(-2H)[3]\in\Coh(X)[3]
$$
and
$$
S_{X}(\mathcal{O}_{X}(H))=\mathcal{O}_{X}(H+K_{X})[3]=\mathcal{O}_{X}(-H)[3]\in\Coh(X)[3].
$$
In the next, we check that $\mathcal{O}_{X}$, $\mathcal{O}_{X}(H)$, $\mathcal{O}_{X}(-2H)[2]$ and $\mathcal{O}_{X}(-H)[2]$ belong to the heart $\Coh_{\alpha, \beta}^{0}(X)$ under the previous assumptions.

Since
\begin{eqnarray*}
\mu_{H}(\mathcal{O}_{X}) =0 &>& \beta \\
\mu_{H}(\mathcal{O}_{X}(-2H))=-2 &<& \beta \\
\mu_{H}(\mathcal{O}_{X}(H))=1 &>& \beta \\
\mu_{H}(\mathcal{O}_{X}(-H))=-1 &<& \beta,
\end{eqnarray*}
and these are $\mu_H$-stable line bundles, it follows that $\mathcal{O}_{X}, \mathcal{O}_{X}(-2H)[1], \mathcal{O}_{X}(H),\mathcal{O}_{X}(-H)[1]\in \Coh^{\beta}(X)$. Then, by \cite[Corollary 3.11 (a)]{BMS}, 
$\mathcal{O}_{X}$, $\mathcal{O}_{X}(-2H)[1]$, $\mathcal{O}_{X}(H)$ and $\mathcal{O}_{X}(-H)[1]$ 
are $\sigma_{\alpha, \beta}$-stable for $\alpha>0$.

Note the following inequalities:
\begin{eqnarray*}
\Re Z_{\alpha, \beta}(\mathcal{O}_{X}) 
&=& \frac{1}{2}\alpha^{2}H^{3}-\frac{\beta^{2}}{2}H^{3}<0,\\
\Re Z_{\alpha, \beta}(\mathcal{O}_{X}(-2H)[1]) 
&=& -\frac{1}{2}\alpha^{2}H^{3}+\frac{(\beta+2)^{2}}{2}H^{3}>0, \\
\Re Z_{\alpha, \beta}(\mathcal{O}_{X}(H)) 
&=& \frac{1}{2}\alpha^{2}H^{3}-\frac{(1-\beta)^{2}}{2}H^{3}<0, \\
\Re Z_{\alpha, \beta}(\mathcal{O}_{X}(-H)[1]) 
&=&  -\frac{1}{2}\alpha^{2}H^{3}+\frac{(1+\beta)^{2}}{2}H^{3} \geq 0.
\end{eqnarray*}
As $\Im Z_{\alpha, \beta}>0$ on $\mathcal{O}_{X}$, $\mathcal{O}_{X}(-2H)[1]$, $\mathcal{O}_{X}(H)$ and $\mathcal{O}_{X}(-H)[1]$, 
it follows that
$$
\mu_{\alpha, \beta}(\mathcal{O}_{X})>0,\; \mu_{\alpha, \beta}(\mathcal{O}_{X}(H))>0
$$
and
$$
\mu_{\alpha, \beta}(\mathcal{O}_{X}(-2H)[1])<0,\; \mu_{\alpha,\beta}(\mathcal{O}_{X}(-H)[1]) \leq 0.
$$

As a consequence, we obtain 
$\mathcal{O}_{X}, \mathcal{O}_{X}(-2H)[2], \mathcal{O}_{X}(H), \mathcal{O}_{X}(-H)[2]\in \Coh_{\alpha, \beta}^{0}(X)$. 
By \cite[Lemma 2.16]{BLMS}, there are no nonzero objects $F\in \Coh_{\alpha, \beta}^{0}(X) \cap \Ku(X)$ with $Z_{\alpha, \beta}^{0}(F)=0$.
The claim follows from Proposition \ref{prop_inducstab}.
\end{proof}

\begin{rmk}
Note that \cite[Theorem 6.8]{BLMS} and Theorem \ref{thm_U} hold for Fano threefolds of index two without the assumption on the Picard rank.
\end{rmk}

\begin{rmk}
\label{rmk_planeofstability}
In \cite[Section 1]{LiZhao} the authors introduce an alternative way to visualize the weak stability conditions $\sigma_{\alpha,\beta}$, which will be used in some computations of the last section. More precisely, a complex $E \in \D(X)$ such that $\ch_{\leq 2}(E) \neq (0,0,0)$ is represented by the point $[H^3\ch_0(E):H^2\ch_1(E):H\ch_2(E)]$ in a projective space, and when $\rk(E) \neq 0$, by the affine coordinates 
$$( s(E):=\frac{H^2\ch_1(E)}{H^3 \ch_0(E)}, q(E):=\frac{H\ch_2(E)}{H^3\ch_0(E)} ) \in \mathbb{A}^2_{\R}.$$
For every $(s,q) \in \mathbb{A}^2_{\R}$ with $q > \frac{1}{2} s^2$, the pair $\sigma'_{q,s}=(\Coh^s(X), Z'_{q,s})$, where
$$Z_{q,s}^{\prime}(E):=-\left( H\ch_2(E) -q H^3\ch_0(E)  \right)+ \sqrt{-1}\left( H^2\ch_1(E)-sH^3\ch_0(E) \right),$$
is a weak stability condition on $\D(X)$ with respect to $\Lambda_H^2$, as 
$$\sigma_{\alpha,\beta}= \sigma_{ \frac{\beta^2+\alpha^2}{2},\beta}^{\prime} \quad \text{and} \quad \mu_{\alpha,\beta}=\mu'_{\frac{\beta^2+\alpha^2}{2}, \beta}-\beta.$$
Thus by Theorem \ref{thm_U} the weak stability conditions $\sigma'_{q,s}$ which after tilting at $\mu'_{q,s}=-\frac{1}{2}$ restrict to stability conditions on $\Ku(X)$ are in the area
$$\{ (s,q) \in \mathbb{A}^2_{\R}: -\frac{1}{2} \leq s <0, \frac{1}{2}s^2 < q < s^2 \text{ or } -1 < s <-\frac{1}{2}, \frac{1}{2}s^2 < q \leq s^2+s+\frac{1}{2} \}.$$
Note that we will use the notation $ (\sigma'_{q,s})^{-\frac{1}{2}}$ for the weak stability condition obtained by tilting $\sigma'_{q,s}$ at $\mu'_{q,s}=-\frac{1}{2}$. Also for a point $P=(s(P),q(P))$ over the parabola $q-\frac{1}{2}s^2=0$, we will use the notation $\sigma_P:=\sigma'_{q(P),s(P)}$.
\end{rmk}

\subsection{Orbit of $\sigma(\alpha,\beta)$}
Let $X$ be a Fano threefold with $\text{Pic}(X) \cong \Z$ and of index $2$ with polarization $H:=-\frac{1}{2}K_X$. We set
\begin{equation*}
V:= \lbrace (\alpha,\beta) \in \R_{>0} \times \R : -\frac{1}{2} \leq \beta < 0, \alpha < -\beta, \text{ or } -1 < \beta <-\frac{1}{2}, \alpha \leq 1+\beta \rbrace.   
\end{equation*}
By Theorem \ref{thm_U}, we have a map 
$$
\varphi: V \to \Stab(\Ku(X))
$$
defined by 
$$
(\alpha,\beta) \mapsto \sigma(\alpha,\beta)=(\AA(\alpha,\beta),Z(\alpha,\beta)).
$$
Using slicings (see Definition \ref{def_slicing}), we write $\sigma(\alpha,\beta)=(\PP(\alpha,\beta),Z(\alpha,\beta))$, 
with
$$
\PP(\alpha,\beta)((0,1])=\AA(\alpha,\beta).
$$ 
Note that if $\PP^0_{\alpha,\beta}$ is the slicing in $\D(X)$ such that $\PP^0_{\alpha,\beta}((0,1])=\Coh^0_{\alpha,\beta}(X)$, then
$$\PP^0_{\alpha,\beta}(\phi) \cap \Ku(X) \subset \PP(\alpha,\beta)(\phi).$$
We will also denote by $\PP_{\alpha,\beta}$ the slicing such that $\PP_{\alpha,\beta}((0,1])=\Coh^\beta(X)$. 

The main result of this section states that the stability conditions in $\varphi(V)$ are in the same orbit of a fixed one $\sigma(\alpha_0,-\frac{1}{2})$ with respect to the $\tilde{\mathrm{GL}}_2^+(\R)$-action.

\begin{prop}
\label{prop_conncomp}
Fix $0< \alpha_0 < \frac{1}{2}$. For every $(\alpha,\beta) \in V$, there is $\tilde{g} \in \tilde{\emph{GL}}^+_2(\R)$ such that $\sigma(\alpha,\beta)= \sigma(\alpha_0,-\frac{1}{2}) \cdot \tilde{g}$.
\end{prop}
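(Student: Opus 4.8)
The plan is to produce the required element $\tilde g=(g,M)\in\tilde{\mathrm{GL}}^+_2(\R)$ by first comparing the central charges $Z(\alpha,\beta)$ and $Z(\alpha_0,-\tfrac12)$ as $\R$-linear maps $\Lambda^2_{H,\Ku(X)}\otimes\R\cong\R^2\to\C$, and then checking that the matrix $M$ relating them is orientation-preserving and compatible with the hearts. Since both central charges are restrictions of $Z^0_{\alpha,\beta}=-iZ_{\alpha,\beta}$ (and likewise for $\alpha_0$), and $Z_{\alpha,\beta}$ depends on $(\alpha,\beta)$ through the explicit formula in Proposition \ref{first-tilting-wsc}, the composite $Z(\alpha,\beta)\circ Z(\alpha_0,-\tfrac12)^{-1}$ is a well-defined element $M^{-1}\in\mathrm{GL}_2(\R)$ provided $Z(\alpha_0,-\tfrac12)$ is injective on the rank-two lattice; this injectivity follows from the fact that $\sigma(\alpha_0,-\tfrac12)$ is a genuine Bridgeland stability condition (Theorem \ref{thm_U}). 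First I would write out both maps in the basis of $\Lambda^2_{H,\Ku(X)}$ coming from $(H^3\ch_0,H^2\ch_1,H\ch_2)$ restricted to the Kuznetsov component, and read off $M$.

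The second step is to verify that $M\in\mathrm{GL}^+_2(\R)$, i.e.\ $\det M>0$. This is where a small computation is unavoidable, but it should reduce to checking that the two central charges induce the same orientation on $\R^2$; concretely, for the class $v=[\II_\ell]$ (or any fixed pair of classes spanning the lattice) one compares the sign of the determinant formed by $\Re$ and $\Im$ of $Z(\alpha,\beta)$ against that of $Z(\alpha_0,-\tfrac12)$, and one uses that $V$ is connected and that both stability conditions lie in $\Stab(\Ku(X))$ so the determinant never vanishes along a path in $V$. Once $\det M>0$, the pair $(g,M)$ with $g$ the unique lift of the $\mathrm{PSL}_2$-action on phases determined by $M$ and by a choice of base point gives an element of $\tilde{\mathrm{GL}}^+_2(\R)$, and $\sigma(\alpha_0,-\tfrac12)\cdot\tilde g$ has central charge $M^{-1}\circ Z(\alpha_0,-\tfrac12)=Z(\alpha,\beta)$ by construction.

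The remaining and genuinely substantive point is that $\sigma(\alpha_0,-\tfrac12)\cdot\tilde g$ and $\sigma(\alpha,\beta)$ have the \emph{same heart} (equivalently, the same slicing), not merely the same central charge. Here I would argue as follows: by the Bridgeland deformation theorem the map $\mathcal Z\colon\Stab(\Ku(X))\to\Hom(\Lambda,\C)$ is a local homeomorphism, so two stability conditions with equal central charge that can be joined by a path of stability conditions all having that central charge must coincide. So it suffices to exhibit a path from $\sigma(\alpha_0,-\tfrac12)\cdot\tilde g$ to $\sigma(\alpha,\beta)$ inside $\Stab(\Ku(X))$; but the $\tilde{\mathrm{GL}}^+_2(\R)$-action is continuous and $V$ is connected and simply connected, so $(\alpha,\beta)\mapsto\sigma(\alpha,\beta)$ together with the action traces out such a path. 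Concretely I expect one deforms first along the segment $\{(\alpha_0,-\tfrac12)\}$ to $(\alpha,-\tfrac12)$ if needed and then varies $\beta$, tracking that the heart $\Coh^0_{\alpha,\beta}(X)\cap\Ku(X)$ varies by the $\tilde{\mathrm{GL}}^+_2$-action — this uses that changing $\beta$ corresponds, at the level of the double-tilted heart, to an element of $\mathrm{GL}^+_2(\R)$ acting on $\Coh^0_{\alpha,\beta}(X)$ as observed in the weak stability picture, and that this action restricts to $\Ku(X)$. The main obstacle is precisely this last verification: matching slicings rather than just central charges, i.e.\ ruling out that $\sigma(\alpha_0,-\tfrac12)\cdot\tilde g$ and $\sigma(\alpha,\beta)$ differ by a deck transformation / sit in different components of the fiber of $\mathcal Z$. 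I would handle it by the connectedness-of-$V$ argument above, using that $V$ is contractible so the lift $\tilde g$ can be chosen to vary continuously with $(\alpha,\beta)$ starting from the identity at $(\alpha_0,-\tfrac12)$.
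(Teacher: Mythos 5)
Your first step (comparing central charges and checking orientations) matches the paper's Lemma \ref{lemma_orient}, though note one imprecision: injectivity of $Z(\alpha_0,-\tfrac12)$ on the rank-two lattice does \emph{not} follow formally from $\sigma(\alpha_0,-\tfrac12)$ being a Bridgeland stability condition (a stability condition can have central charge with image in a line); it is exactly the non-vanishing of the determinant you compute in the orientation check, so the two verifications collapse into one explicit $2\times 2$ computation, as in the paper.

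The genuine gap is in your final step, where you match the slicings. Your path-lifting argument requires as input that the map $\varphi\colon(\alpha,\beta)\mapsto\sigma(\alpha,\beta)$ is continuous, and that "the heart $\Coh^0_{\alpha,\beta}(X)\cap\Ku(X)$ varies by the $\tilde{\mathrm{GL}}^+_2(\R)$-action" as $\beta$ moves. The second assertion is precisely the statement to be proved, and the first is not available: in this paper the continuity of $\varphi$ is recorded as a \emph{consequence} of Proposition \ref{prop_conncomp} (see the remark immediately following its proof), and it is not obvious a priori, since the hearts $\AA(\alpha,\beta)$ are produced by tilting and the topology on $\Stab(\Ku(X))$ controls phases and masses, not hearts directly. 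So as written your argument is circular. The paper replaces this step by a direct structural comparison of the hearts: Lemma \ref{lemma_tiltCoh} shows $\Coh^\beta(X)$ is a tilt of $\Coh^{-\frac12}(X)$ (or of its shift by $[-1]$), Lemma \ref{prop_tilt} upgrades this to the statement that $\AA(\alpha,\beta)$ is a tilt of $\AA(\alpha_0,-\tfrac12)$ up to shift (using that cohomology with respect to $\AA(\alpha,\beta)$ of objects of $\Ku(X)$ agrees with cohomology in $\Coh^0_{\alpha,\beta}(X)$, and working up to objects supported on points, which do not lie in $\Ku(X)$), and then \cite[Lemma 8.11]{BMS} --- two stability conditions with the same central charge whose hearts are tilts of a common heart coincide --- finishes the proof. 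That lemma is the key ingredient your proposal is missing; if you want to salvage your route, you would first have to prove continuity of $\varphi$ independently, which is essentially as hard as the tilting lemmas.
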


We need the following starting lemmas.

\begin{lemma}
\label{lemma_tiltCoh}
If $-\frac{1}{2} < \beta < 0$, then $\CCoh^\beta(X)$ is a tilt of $\CCoh^{-\frac{1}{2}}(X)$. If $-1 < \beta <-\frac{1}{2}$, then $\CCoh^\beta(X)$ is a tilt of $\CCoh^{-\frac{1}{2}}(X)[-1]$.
\end{lemma}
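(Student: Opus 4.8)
The plan is to express $\Coh^\beta(X)$ as an HRS-tilt of $\Coh^{-\frac12}(X)$ (resp. of $\Coh^{-\frac12}(X)[-1]$) with respect to a suitable slope of a weak stability condition, using Proposition \ref{prop_HRS}. Recall that $\Coh^\beta(X)$ is by construction the tilt $\langle\TT^\beta_{\sigma_H},\FF^\beta_{\sigma_H}[1]\rangle$ of $\Coh(X)$ at slope $\mu_H=\beta$. The key observation is the nesting of torsion pairs: for $-\frac12\le\beta<0$ one has $\TT^{-1/2}_{\sigma_H}\subset\TT^{\beta}_{\sigma_H}$ and $\FF^{\beta}_{\sigma_H}\subset\FF^{-1/2}_{\sigma_H}$, since a $\mu_H$-semistable sheaf of slope $>-\tfrac12$ that is \emph{also} of slope $>\beta$ lies in the smaller class only when the slope genuinely exceeds $\beta$. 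So $\Coh^\beta(X)$ sits between $\Coh^{-1/2}(X)$ and $\Coh^{-1/2}(X)[-1]$ inside $\D(X)$, which is exactly the situation where one heart is a tilt of the other.

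First I would make this precise using the slope $\mu_{\alpha,\beta'}$ (for a fixed small $\alpha>0$ and $\beta'=-\tfrac12$) on $\Coh^{-1/2}(X)$, i.e.\ the weak stability condition $\sigma_{\alpha,-1/2}$ of Proposition \ref{first-tilting-wsc}, or more transparently the coordinates $(s,q)$ of Remark \ref{rmk_planeofstability}. The point is to identify, among the $\sigma_{\alpha,-1/2}$-semistable objects of $\Coh^{-1/2}(X)$, the torsion class consisting of those with $\mu_{\alpha,-1/2}$ above a chosen threshold $\mu_0$, and check that the resulting HRS-tilt $\Coh^{-1/2}_{\sigma_{\alpha,-1/2}}{}^{\mu_0}(X)$ equals $\Coh^\beta(X)$. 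Concretely: the line bundles $\OO_X(kH)$ that generate the torsion/torsion-free decomposition defining $\Coh^\beta(X)$ from $\Coh^{-1/2}(X)$ are precisely those with $-\tfrac12\ge\mu_H>\beta$ going to the shifted part and those with $\mu_H>-\tfrac12$, resp.\ $\mu_H\le\beta$, staying put, and one verifies the threshold $\mu_0$ can be chosen so that the classical tilt at $\mu_H=\beta$ of $\Coh(X)$ is reproduced as a tilt at $\mu_0$ of $\Coh^{-1/2}(X)$. The case $-1<\beta<-\tfrac12$ is symmetric but now $\Coh^\beta(X)$ lies between $\Coh^{-1/2}(X)[-1]$ and $\Coh^{-1/2}(X)[-2]$; shifting by $[-1]$ reduces it to the previous picture, giving that $\Coh^\beta(X)$ is a tilt of $\Coh^{-1/2}(X)[-1]$.

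The main obstacle is bookkeeping: verifying that the two torsion pairs (the one cutting $\Coh^\beta(X)$ out of $\Coh(X)$, and the one cutting it out of $\Coh^{-1/2}(X)$) really match up requires tracking $\mu_H$-semistable sheaves whose slopes lie in the window $(\beta,-\tfrac12]$ and confirming that every such sheaf, viewed inside $\Coh^{-1/2}(X)$, has $\mu_{\alpha,\beta'}$ below the threshold, while sheaves of slope $>-\tfrac12$ stay above it. This is where the Bogomolov–Gieseker inequality $\Delta_H(E)\ge0$ for $\mu_H$-semistable sheaves and the explicit formula for $\mu_{\alpha,\beta'}$ enter, pinning down the walls near $\beta'=-\tfrac12$. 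Once the inclusions of torsion classes are checked, Proposition \ref{prop_HRS} gives that $\Coh^\beta(X)$ is the asserted tilt, and this lemma then feeds into the proof of Proposition \ref{prop_conncomp}, where the corresponding $\tilde{g}\in\tilde{\mathrm{GL}}^+_2(\R)$ is assembled from these successive tilts.
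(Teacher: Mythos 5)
Your first paragraph contains the right idea (a case analysis on $\mu_H$-semistable sheaves showing that $\Coh^\beta(X)$ is sandwiched between two shifts of $\Coh^{-\frac{1}{2}}(X)$), but the directions are reversed. For $-\frac{1}{2}\leq\beta<0$ the condition $\mu_H>\beta$ is \emph{stronger} than $\mu_H>-\frac{1}{2}$, so the correct inclusions are $\TT^{\beta}_{\sigma_H}\subset\TT^{-1/2}_{\sigma_H}$ and $\FF^{-1/2}_{\sigma_H}\subset\FF^{\beta}_{\sigma_H}$, not the ones you wrote; a $\mu_H$-semistable $F$ with $-\frac{1}{2}<\mu_H(F)\leq\beta$ has $F\in\Coh^{-\frac{1}{2}}(X)$ but $F[1]\in\Coh^{\beta}(X)$, whence $\Coh^{\beta}(X)\subset\langle\Coh^{-\frac{1}{2}}(X),\Coh^{-\frac{1}{2}}(X)[1]\rangle$. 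You place $\Coh^\beta(X)$ between $\Coh^{-\frac{1}{2}}(X)$ and $\Coh^{-\frac{1}{2}}(X)[-1]$ (and, in the second case, between the shifts $[-1]$ and $[-2]$), which is off by one shift throughout and would deliver the wrong conclusion, namely a tilt of $\Coh^{-\frac{1}{2}}(X)[-1]$ in the first case.

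The more serious gap is the strategy in your second and third paragraphs: the torsion pair cutting $\Coh^\beta(X)$ out of $\Coh^{-\frac{1}{2}}(X)$ cannot be realized as a slope-threshold tilt for $\sigma_{\alpha,-\frac{1}{2}}$. That torsion pair is governed by the classical slopes $\mu_H$ of the HN factors of the cohomology sheaves, i.e.\ by $(\ch_0,\ch_1)$ alone, while $\mu_{\alpha,-\frac{1}{2}}$ depends on $\ch_2$. Concretely, for a curve $C\subset X$ the ideal sheaf $\II_C$ is $\mu_H$-stable of slope $0>\beta$, hence lies in the torsion class $\Coh^{\beta}(X)\cap\Coh^{-\frac{1}{2}}(X)$ for every such $C$; but $\mu_{\alpha,-\frac{1}{2}}(\II_C)\to-\infty$ as $\deg C\to\infty$, so the minimal $\sigma_{\alpha,-\frac{1}{2}}$-HN slope of $\II_C$ eventually drops below any fixed threshold $\mu_0$ and $\II_C\notin\TT^{\mu_0}_{\sigma_{\alpha,-\frac{1}{2}}}$. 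Hence no threshold $\mu_0$ reproduces the torsion pair, and the Bogomolov--Gieseker and wall analysis you invoke has nothing to pin down. None of this machinery is needed: here ``tilt'' means tilt at an \emph{arbitrary} torsion pair, and the general fact (\cite[Exercise 6.5]{MS}, which is what the paper cites) is that a heart $\BB$ with $\BB\subset\langle\AA,\AA[1]\rangle$ is automatically the tilt of $\AA$ at the torsion pair $(\BB\cap\AA,\,\BB[-1]\cap\AA)$. The paper's proof is exactly your first paragraph with the inclusions corrected, followed by this citation; the weak stability conditions $\sigma_{\alpha,\beta}$ play no role in this lemma.
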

\begin{proof}
Consider the case $-\frac{1}{2} < \beta < 0$. Let $F$ be a $\mu_H$-semistable object in $\Coh(X)$. If $\mu_H(F)>\beta>-\frac{1}{2}$, then $F \in \Coh^\beta(X) \cap \Coh^{-\frac{1}{2}}(X)$. If $\mu_H(F) \leq \beta$, then $F[1] \in \Coh^\beta(X)$. We have the following possibilities. If $-\frac{1}{2} < \mu_H(F) \leq \beta$, then $F \in \Coh^{-\frac{1}{2}}(X)$, and thus $F[1] \in \Coh^{-\frac{1}{2}}(X)[1]$. Otherwise, if $\mu_H(F) \leq -\frac{1}{2}$, then $F[1] \in \Coh^{-\frac{1}{2}}(X)$. As every element in $\Coh^\beta(X)$ is an extension of semistable coherent sheaves as above, we conclude that 
$$\Coh^\beta(X) \subset \langle \Coh^{-\frac{1}{2}}(X), \Coh^{-\frac{1}{2}}(X)[1] \rangle.$$
This implies the first part of the statement by \cite[Exercise 6.5]{MS}.

Now assume $-1 < \beta <-\frac{1}{2}$. Let $F$ be a $\mu_H$-semistable object in $\Coh(X)$. If $\mu_H(F)> \beta$, then $F \in \Coh^{\beta}(X)$. When $\mu_H(F)> -\frac{1}{2}$, we have $F \in \Coh^{-\frac{1}{2}}(X)$, while if $\beta < \mu_H(F) \leq -\frac{1}{2}$, then $F[1] \in \Coh^{-\frac{1}{2}}(X)$, i.e.\ $F \in \Coh^{-\frac{1}{2}}(X)[-1]$.  Otherwise, if $\mu_H(F) \leq \beta < -\frac{1}{2}$, then $F[1] \in \Coh^\beta(X) \cap \Coh^{-\frac{1}{2}}(X)$. It follows that 
$$\Coh^\beta (X) \subset \langle \Coh^{-\frac{1}{2}}(X)[-1], \Coh^{-\frac{1}{2}}(X) \rangle.$$
This ends the proof of the statement.
\end{proof}

As a consequence, we get the following relation between the hearts on $\Ku(X)$.
\begin{lemma}
\label{prop_tilt}
Fix $0< \alpha_0 < \frac{1}{2}$. If $-\frac{1}{2} < \beta < 0$ and $(\alpha,\beta) \in V$, then $\AA(\alpha,\beta)$ is a tilt of $\AA(\alpha_0,-\frac{1}{2})$. If $-1 < \beta <-\frac{1}{2}$ and $(\alpha,\beta) \in V$, then $\AA(\alpha,\beta)$ is a tilt of $\AA(\alpha_0,-\frac{1}{2})[-1]$. If $\beta=-\frac{1}{2}$ and $(\alpha,\beta) \in V$, then $\AA(\alpha,\beta)=\AA(\alpha_0,\beta)$.
\end{lemma}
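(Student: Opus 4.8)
The plan is to reduce each of the three cases to an inclusion of hearts and then recognize it as a tilt via the criterion of \cite[Exercise 6.5]{MS} already used in the proof of Lemma \ref{lemma_tiltCoh}: for hearts $\BB_1,\BB_2$ of bounded t-structures, $\BB_2$ is a tilt of $\BB_1$ exactly when $\BB_2\subset\langle\BB_1,\BB_1[1]\rangle$. So, when $-\tfrac12<\beta<0$ I want $\AA(\alpha,\beta)\subset\langle\AA(\alpha_0,-\tfrac12),\AA(\alpha_0,-\tfrac12)[1]\rangle$; when $-1<\beta<-\tfrac12$ I want $\AA(\alpha,\beta)\subset\langle\AA(\alpha_0,-\tfrac12)[-1],\AA(\alpha_0,-\tfrac12)\rangle$; and when $\beta=-\tfrac12$ I want $\AA(\alpha,\beta)=\AA(\alpha_0,-\tfrac12)$, noting that $(\alpha,-\tfrac12),(\alpha_0,-\tfrac12)\in V$ force $\alpha,\alpha_0\in(0,\tfrac12)$.

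For the first two cases I would first establish the analogous statement in $\D(X)$, that $\Coh_{\alpha,\beta}^{0}(X)\subset\langle\Coh_{\alpha_0,-1/2}^{0}(X),\Coh_{\alpha_0,-1/2}^{0}(X)[1]\rangle$ (respectively its $[-1]$-shift). Given $E\in\Coh_{\alpha,\beta}^{0}(X)$, I would unwind it through the two successive torsion pairs that define it --- first the one realizing $\Coh^{\beta}(X)$ as a tilt of $\Coh^{-1/2}(X)$ from Lemma \ref{lemma_tiltCoh}, then the slope-$0$ torsion pair on $\Coh^{\beta}(X)$ --- to see that the $\Coh^{-1/2}(X)$-cohomology objects $\mathcal H^{j}(E)$ are nonzero only for $j\in\{-2,-1,0\}$ and lie in controlled torsion, resp.\ free, subcategories of $\Coh^{-1/2}(X)$. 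It then remains to check that $\mathcal H^{0}(E)$ lies in the torsion part, and $\mathcal H^{-2}(E)$ in the free part, of the slope-$0$ torsion pair defining $\Coh_{\alpha_0,-1/2}^{0}(X)$; this makes the $\Coh_{\alpha_0,-1/2}^{0}(X)$-cohomology of $E$ concentrated in degrees $-1,0$, i.e.\ $E\in\langle\Coh_{\alpha_0,-1/2}^{0}(X),\Coh_{\alpha_0,-1/2}^{0}(X)[1]\rangle$. This is the step where the precise shape of $V$ enters: the verification proceeds as in the proof of Theorem \ref{thm_U}, comparing the signs of $\Re Z_{\alpha,\beta}$, $\Im Z_{\alpha,\beta}$ and $\Re Z_{\alpha_0,-1/2}$ on $\mu_H$-semistable sheaves of the relevant slopes, using the locally finite wall-crossing for $\sigma_{\alpha,\beta}$ from \cite[Proposition B.5]{BMS}.

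To pass to $\Ku(X)$, I would use that the inclusion $\Ku(X)\hookrightarrow\D(X)$ is t-exact for the bounded t-structures with hearts $\AA(\alpha_0,-\tfrac12)$ on $\Ku(X)$ and $\Coh_{\alpha_0,-1/2}^{0}(X)$ on $\D(X)$: since $\AA(\alpha_0,-\tfrac12)=\Coh_{\alpha_0,-1/2}^{0}(X)\cap\Ku(X)$ is a heart by Theorem \ref{thm_U}, the aisle $\Ku(X)^{\le 0}$ is the extension closure of the $\AA(\alpha_0,-\tfrac12)[k]$ with $k\ge 0$, which maps into $\D(X)^{\le 0}$, and dually for $\Ku(X)^{\ge 0}$. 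Hence for $E\in\AA(\alpha,\beta)\subset\Ku(X)\cap\Coh_{\alpha,\beta}^{0}(X)$ the $\Coh_{\alpha_0,-1/2}^{0}(X)$-cohomology objects of $E$ --- the only nonzero ones being in degrees $-1,0$ by the previous step --- already lie in $\Ku(X)$, hence in $\AA(\alpha_0,-\tfrac12)$, so $E\in\langle\AA(\alpha_0,-\tfrac12),\AA(\alpha_0,-\tfrac12)[1]\rangle$. The case $-1<\beta<-\tfrac12$ is identical after a global shift by $[-1]$.

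For $\beta=-\tfrac12$ I would instead show that $\AA(\alpha,-\tfrac12)$ is independent of $\alpha\in(0,\tfrac12)$. By Hirzebruch--Riemann--Roch, for $E\in\Ku(X)$ the vanishings $\chi(\OO_X,E)=\chi(\OO_X(H),E)=0$ yield $H\ch_2^{-1/2}(E)=(\tfrac d8-1)\ch_0(E)$, using $-K_X=2H$ and hence $H\cdot c_2(X)=12\,\chi(\OO_X)=12$. Thus on $\Lambda_{H,\Ku(X)}^{2}\cong\Z^2$ the central charges $Z(\alpha,-\tfrac12)$ and $Z(\alpha_0,-\tfrac12)$ have equal real part, while their imaginary parts differ by the positive factor $r=(\tfrac d8-1-\tfrac{\alpha^2 d}{2})/(\tfrac d8-1-\tfrac{\alpha_0^2 d}{2})$; that is, $Z(\alpha,-\tfrac12)=M\circ Z(\alpha_0,-\tfrac12)$ with $M=\mathrm{diag}(1,r)\in\mathrm{GL}_2^+(\R)$. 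A lift $\tilde g\in\tilde{\mathrm{GL}}_2^+(\R)$ of $M^{-1}$ has $g$-component fixing $0$ and $1$, hence $g((0,1])=(0,1]$, so $\sigma(\alpha_0,-\tfrac12)\cdot\tilde g$ has heart $\AA(\alpha_0,-\tfrac12)$ and central charge $Z(\alpha,-\tfrac12)$; since $\varphi$ is continuous and the map $\mathcal Z\colon\Stab(\Ku(X))\to\Hom(\Lambda_{H,\Ku(X)}^{2},\C)$, $(\AA,Z)\mapsto Z$, is a local homeomorphism by \cite{Bri}, for $\alpha$ near $\alpha_0$ this forces $\sigma(\alpha,-\tfrac12)=\sigma(\alpha_0,-\tfrac12)\cdot\tilde g$, hence $\AA(\alpha,-\tfrac12)=\AA(\alpha_0,-\tfrac12)$, and connectedness of $(0,\tfrac12)$ upgrades this to all $\alpha,\alpha_0$. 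I expect the main obstacle to be the torsion/free bookkeeping in the second paragraph --- tracking how the slope-$0$ tilt interacts with the tilt of Lemma \ref{lemma_tiltCoh}, and ensuring nothing is lost after restriction to $\Ku(X)$ --- which is a more elaborate version of the sign analysis in the proof of Theorem \ref{thm_U}.
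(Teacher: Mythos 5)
Your skeleton matches the paper's: reduce to the inclusion $\Coh^0_{\alpha,\beta}(X)\subset\langle\Coh^0_{\alpha_0,-\frac12}(X),\Coh^0_{\alpha_0,-\frac12}(X)[1]\rangle$ in $\D(X)$ and then restrict to $\Ku(X)$ using that the $\Coh^0_{\alpha_0,-\frac12}(X)$-cohomology of an object of $\Ku(X)$ stays in $\Ku(X)$ (this is \cite[Lemma 4.3]{BLMS}, exactly as in the paper), and your characterization of membership in the double tilt ($\HH^0\in\TT_0$, $\HH^{-2}\in\FF_0$, cohomology in degrees $-2,-1,0$) is correct. The gap is the step you defer to ``a sign comparison as in the proof of Theorem \ref{thm_U}'': that is precisely the content of the lemma and is not reachable that way. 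Membership of $\HH^0(E)$ in the torsion part $\TT_0$ of the slope-$0$ torsion pair at $(\alpha_0,-\frac12)$ is a condition on the $\sigma_{\alpha_0,-\frac12}$-HN factors of $\HH^0(E)$; these factors are objects of the tilted heart $\Coh^{-\frac12}(X)$, not $\mu_H$-semistable sheaves, while your hypothesis only controls the $\sigma_{\alpha,\beta}$-HN factors at the \emph{different} point $(\alpha,\beta)$. Comparing HN filtrations at two points of the $(\alpha,\beta)$-plane means crossing walls, and \cite[Proposition B.5]{BMS} only gives constancy of stability inside a chamber. The proof of Theorem \ref{thm_U} evades this because it deals with four $\mu_H$-stable line bundles, which are $\sigma_{\alpha,\beta}$-stable for all $\alpha>0$. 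The paper's actual mechanism is different: it first proves that $\Coh^\beta(X)=\PP^0_{\alpha,\beta}((-\frac12,\frac12])$ up to objects supported on points (tilting a weak stability condition preserves semistability up to objects with vanishing central charge), so the inclusion of Lemma \ref{lemma_tiltCoh} becomes an inclusion of slicing intervals $\PP^0_{\alpha,\beta}((-\frac12,\frac12])\subset\PP^0_{\alpha_0,-\frac12}((-\frac12,\frac32])$, which is then rotated by $\frac\pi2$ to yield the statement for the double tilts; objects supported on points are then discarded because $\Ku(X)$ contains none. Without this identification, or a tool such as \cite[Lemma 3]{LiZhao2} controlling the phases of HN factors under deformation, your torsion/free bookkeeping cannot be closed.

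For $\beta=-\frac12$ your central charge computation is correct (and is essentially Lemma \ref{lemma_orient} restricted to the line $\beta=-\frac12$): on $\Lambda^2_{H,\Ku(X)}$ one has $Z(\alpha,-\frac12)=M\circ Z(\alpha_0,-\frac12)$ with $M=\mathrm{diag}(1,r)$, $r>0$, and a lift of $M^{-1}$ fixes the interval $(0,1]$. But the conclusion is circular: you invoke continuity of $\varphi$, which the paper only obtains as a consequence of Proposition \ref{prop_conncomp}, i.e.\ downstream of the present lemma. Theorem \ref{thm_U} produces the stability conditions $\sigma(\alpha,-\frac12)$ one at a time and does not by itself assert that they form a continuous path in $\Stab(\Ku(X))$; and two stability conditions with the same central charge need not coincide without some control relating their hearts --- that is exactly what \cite[Lemma 8.11]{BMS} supplies once one knows the hearts are tilts of a common heart, which is again the statement being proved. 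The paper settles $\beta=-\frac12$ by the same slicing identity, $\Coh^{-\frac12}(X)=\PP^0_{\alpha,-\frac12}((-\frac12,\frac12])=\PP^0_{\alpha_0,-\frac12}((-\frac12,\frac12])$ up to objects supported on points, followed by the restriction to $\Ku(X)$.
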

\begin{proof}
Firstly, we observe that $\Coh^\beta(X)=\PP^0_{\alpha,\beta}((-\frac{1}{2}, \frac{1}{2}])$ for every $\alpha >0$ up to objects supported on points. Indeed, an object $F \in \D(X)$ is $\sigma_{\alpha,\beta}$-semistable if and only if it is $\sigma^0_{\alpha,\beta}$-semistable up to objects supported on points (see \cite[Proof of Proposition 2.15]{BLMS}). This is a consequence of the fact that we are tilting a weak stability condition, so the stability is preserved up to objects with vanishing central charge. Consider a $\sigma_{\alpha,\beta}$-semistable object $F \in \Coh^\beta(X)$. Then $\Re(Z^0_{\alpha,\beta}(F))=\Im(Z_{\alpha,\beta}(F)) \geq 0$. Note that, if $\Im(Z_{\alpha,\beta}(F))=0$, then $\Im(Z^0_{\alpha,\beta}(F))=-\Re(Z_{\alpha,\beta}(F)) \geq 0$. This implies that $F \in \PP^0_{\alpha,\beta}(\frac{1}{2})$. Assume $\Im(Z_{\alpha,\beta}(F))>0$. If $\mu_{\alpha,\beta}(F)>0$, then $F \in \Coh^0_{\alpha,\beta}(X)$. It follows that $F \in \PP^0_{\alpha,\beta}((0,\frac{1}{2}))$. On the other hand, if $\mu_{\alpha,\beta}(F) \leq 0$, then $F \in \Coh^0_{\alpha,\beta}(X)[-1]$. Then, we have $F \in \PP^0_{\alpha,\beta}((-\frac{1}{2},0])$.  We deduce that $\Coh^\beta(X) \subset \PP^0_{\alpha,\beta}((-\frac{1}{2}, \frac{1}{2}])$. Since they are both hearts, we conclude that they are the same up to objects supported on points, as we claimed.

Now, assume $-\frac{1}{2} < \beta < 0$. As a consequence, by Lemma \ref{lemma_tiltCoh} we have
$$\Coh^\beta(X)=\PP^0_{\alpha,\beta}((-\frac{1}{2}, \frac{1}{2}]) \subset \PP^0_{\alpha_0,-\frac{1}{2}}((-\frac{1}{2}, \frac{3}{2}])=\langle \Coh^{-\frac{1}{2}}(X),\Coh^{-\frac{1}{2}}(X)[1] \rangle,$$
up to objects supported on points.
The same relation holds after rotating by $\frac{\pi}{2}$, namely
\begin{equation}
\label{eq_1}  
\Coh^0_{\alpha,\beta}(X)=\PP^0_{\alpha,\beta}((0, 1]) \subset \PP^0_{\alpha_0,-\frac{1}{2}}((0,2])= \langle \Coh^0_{\alpha_0,-\frac{1}{2}}(X), \Coh^0_{\alpha_0,-\frac{1}{2}}(X)[1] \rangle,
\end{equation}
up to objects supported on points. An analogous relation holds when $-1 < \beta <-\frac{1}{2}$. If $\beta=-\frac{1}{2}$, then by Lemma  \ref{lemma_tiltCoh}, we have
\begin{equation}
\label{eq_2} 
\Coh^{-\frac{1}{2}}(X)= \PP^0_{\alpha,\beta}((-\frac{1}{2}, \frac{1}{2}])=\PP^0_{\alpha_0,\beta}((-\frac{1}{2}, \frac{1}{2}]),
\end{equation}
up to objects supported on points.

Finally, we restrict to the heart $\AA(\alpha,\beta)$. By construction, the cohomology with respect to $\AA(\alpha,\beta)$ of an object $F \in \Ku(X)$ is the same as the cohomology with respect to $\Coh^0_{\alpha,\beta}(X)$ (see \cite[Lemma 4.3]{BLMS}). Thus, the statement is a consequence of \eqref{eq_1} or \eqref{eq_2}, and the fact that $\Ku(X)$ does not contain objects supported on points.
\end{proof}

The next lemma implies that the central charges $Z(\alpha,\beta)$ for $(\alpha,\beta) \in V$ are in the same orbit by the action of $\mathrm{GL}^+_2(\R)$. Recall that by \cite[Proposition 3.9]{Kuz}), a basis for the numerical Grothendieck group $\NN(\Ku(X))$ of $\Ku(X)$ is
$$\kappa_1:=[\II_\ell]=1-\frac{H^2}{d}, \kappa_2:=H-\frac{H^2}{2}+\frac{d-6}{6d}H^3.$$
\begin{lemma} 
\label{lemma_orient}
For every $(\alpha,\beta) \in V$, the image of the stability function $Z(\alpha,\beta)$ is not contained in a line and the basis $\lbrace Z(\alpha,\beta)(\kappa_1), Z(\alpha,\beta)(\kappa_2) \rbrace$ of $\C$ have the same orientation.
\end{lemma}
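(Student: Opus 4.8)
The plan is a short explicit computation of $Z(\alpha,\beta)$ on the fixed basis $\kappa_1,\kappa_2$, followed by a single determinant check. Since $Z(\alpha,\beta)=Z^0_{\alpha,\beta}|_{\Ku(X)}=-i\,Z_{\alpha,\beta}|_{\Ku(X)}$ and multiplication by $-i$ is a rotation, hence orientation- and determinant-preserving, it is harmless to work with $Z_{\alpha,\beta}$ throughout. Recalling $H^3=d$, the only input I would record is $(H^{3}\ch_0,H^{2}\ch_1,H\ch_2)=(d,0,-1)$ for $\kappa_1=[\II_\ell]=1-\frac{H^2}{d}$ and $(H^{3}\ch_0,H^{2}\ch_1,H\ch_2)=(0,d,-\frac d2)$ for $\kappa_2=H-\frac{H^2}{2}+\frac{d-6}{6d}H^3$; the $H^3$-term in $\kappa_2$ contributes only to $\ch_3$ and is irrelevant for $Z_{\alpha,\beta}$. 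Twisting by $\beta$ and substituting into the formula of Proposition \ref{first-tilting-wsc} yields
$$Z_{\alpha,\beta}(\kappa_1)=\frac{\alpha^2 d}{2}+1-\frac{\beta^2 d}{2}-i\,\beta d,\qquad Z_{\alpha,\beta}(\kappa_2)=\frac d2+\beta d+i\,d.$$

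The next step is to evaluate the determinant of the real $2\times 2$ matrix whose columns are the real and imaginary parts of these two numbers:
$$\det\begin{pmatrix}\frac{\alpha^2 d}{2}+1-\frac{\beta^2 d}{2} & \frac d2+\beta d\\ -\beta d & d\end{pmatrix}=\frac{d^2}{2}\left(\alpha^2+\beta^2+\beta\right)+d=\frac{d^2}{2}\left(\alpha^2+\left(\beta+\frac12\right)^2\right)+d\left(1-\frac d8\right).$$
Here the first summand is $\geq 0$ and, since $1\leq d\leq 5<8$, the second summand is strictly positive, so the determinant is strictly positive for every $(\alpha,\beta)\in\mathbb{R}_{>0}\times\mathbb{R}$, a fortiori on $V$. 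Non-vanishing of the determinant shows that $Z(\alpha,\beta)(\kappa_1)$ and $Z(\alpha,\beta)(\kappa_2)$ are $\mathbb{R}$-linearly independent, so the image $\mathbb{Z}\,Z(\alpha,\beta)(\kappa_1)+\mathbb{Z}\,Z(\alpha,\beta)(\kappa_2)$ of $Z(\alpha,\beta)$ on $\Lambda^{2}_{H,\Ku(X)}$ is not contained in a line; positivity of the determinant shows that the ordered basis $\big(Z(\alpha,\beta)(\kappa_1),Z(\alpha,\beta)(\kappa_2)\big)$ induces the standard orientation of $\mathbb{C}$, which is then the same for every $(\alpha,\beta)\in V$ — exactly what is needed to deduce in Proposition \ref{prop_conncomp} that the central charges lie in one $\mathrm{GL}^+_2(\mathbb{R})$-orbit.

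There is no real conceptual obstacle; the only point deserving care is the sign of the determinant. The summand $\frac{d^2}{2}(\alpha^2+\beta^2+\beta)$ is genuinely negative for some $(\alpha,\beta)\in V$ — e.g.\ for $\beta$ near $-\frac12$ with $\alpha$ small, a limit point of $V$ — so one cannot discard the additive constant $d$, and the bound $d\leq 5$ is used essentially; the completed-square form above makes this transparent. One should also check (trivially) the passage from $Z_{\alpha,\beta}$ to $Z^0_{\alpha,\beta}=-i\,Z_{\alpha,\beta}$ and that $\{\kappa_1,\kappa_2\}$ generates $\Lambda^{2}_{H,\Ku(X)}$, so that the displayed image computation is complete. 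Finally, it is worth remarking that this argument in fact establishes the conclusion for every $(\alpha,\beta)\in\mathbb{R}_{>0}\times\mathbb{R}$, the set $V$ being relevant only to guarantee that $\sigma(\alpha,\beta)$ is a bona fide Bridgeland stability condition.
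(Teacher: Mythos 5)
Your proof is correct and follows essentially the same route as the paper: both compute $Z(\alpha,\beta)$ on the basis $\kappa_1,\kappa_2$ and verify that the resulting $2\times 2$ real matrix has positive determinant (your matrix is $d^2$ times the paper's, and your completed-square form $\frac{d^2}{2}\bigl(\alpha^2+(\beta+\tfrac12)^2\bigr)+d\bigl(1-\tfrac d8\bigr)$ just makes explicit why $d\leq 5$ suffices, a point the paper leaves implicit). The reduction from $Z^0_{\alpha,\beta}=-i\,Z_{\alpha,\beta}$ to $Z_{\alpha,\beta}$ via orientation-preservation of the rotation is a harmless and correctly justified simplification.
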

\begin{proof}
The matrix
$$
\begin{pmatrix} 
-\beta & 1\\
\frac{\beta^2}{2}-\frac{1}{d}-\frac{1}{2}\alpha^2 & -\beta-\frac{1}{2}
\end{pmatrix}
$$
has positive determinant for every $\beta$.
Thus the basis $Z(\alpha,\beta)(\kappa_1)$, $Z(\alpha,\beta)(\kappa_2)$ have the same orientation for every $\alpha, \beta \in \R$ with respect to the standard basis of $\C$.
\end{proof}

\begin{proof}[Proof of Proposition \ref{prop_conncomp}]
Fix $(\alpha, \beta) \in V$. By Lemma \ref{lemma_orient}, there is an element $\tilde{g} \in \tilde{\mathrm{GL}}^+_2(\R)$ such that $\sigma(\alpha_0,-\frac{1}{2}) \cdot \tilde{g}=(\AA',Z(\alpha,\beta))$. By definition, denoting $\tilde{g}=(g,M)$ with $g:\R \to \R$ and $M \in \mathrm{GL}^+_2(\R)$, we have
$$\AA'=\PP'((0,1])=\PP(\alpha_0,-\frac{1}{2})((r,r+1]),$$ 
where $r:=g(0)$. Up to shifting $\AA'$ by an integer, we can assume that $\AA'$ is a tilt of $\AA(\alpha_0,-\frac{1}{2})$.

On the other hand, by Lemma \ref{prop_tilt} the heart $\AA(\alpha,\beta)$ is a tilt of $\AA(\alpha_0,-1/2)$ up to shift. Since $\sigma(\alpha_0,-\frac{1}{2}) \cdot \tilde{g}$ and $\sigma(\alpha,\beta)$ have the same stability function and their hearts are tilt of the same heart, they are the same stability condition by \cite[Lemma 8.11]{BMS}. This proves the statement.
\end{proof}

\begin{rmk}
Proposition \ref{prop_conncomp} implies that the map $\varphi$ is continuous.
\end{rmk}

Fix a stability condition $\sigma(\alpha_0,-\frac{1}{2})$ with $0 < \alpha_0 < \frac{1}{2}$. We set 
$$\KK:=\sigma(\alpha_0,-\frac{1}{2}) \cdot \tilde{\mathrm{GL}}^+_2(\R) \subset \Stab(\Ku(X)),$$ 
which is the universal covering space of $M^+:= Z(\alpha_0,-\frac{1}{2}) \cdot \mathrm{GL}^+_2(\R)$. By Proposition \ref{prop_conncomp}, the image $\varphi(V)$ is contained in $\KK$, and by definition $\KK$ is an open subset of a connected component of $\Stab(\Ku(X))$. 

\begin{rmk}
\label{rmk_almostcc}
Note that all elements in $\KK$ satisfy the support property with respect to the trivial quadratic form $Q=0$, because their central charge is injective. Recall the setting of \cite[Proposition A.5]{BMS}. Consider the open subset of $\Hom(\NN(\Ku(X)),\C)$ containing central charges which are not injective. Let $U$ be the connected component containing $Z(\alpha_0,-\frac{1}{2})$ of this open subset. Let $\mathcal{U}$ be the connected component of the preimage $\mathcal{Z}^{-1}(U)$ containing $\sigma(\alpha_0,-\frac{1}{2})$. We have that $U=M^+$. Indeed, note that $\Hom(\NN(\Ku(X)),\C)$ is the disjoint union of $M^+$ with the component of matrices with negative determinant and the component $M^0$ containing matrices with determinant equal to $0$. Since central charges parametrized by $M^0$ have non-trivial kernel, we get the above equality. By \cite[Proposition A.5]{BMS}, the restriction $\mathcal{Z}|_{\mathcal{U}}: \mathcal{U} \to M^+$ is a covering map. As $\KK$ is the universal covering space of $M^+$, there is a covering map $\KK \to \mathcal{U}$ commuting with $\mathcal{Z}$. But $\KK$ is a subset of $\mathcal{U}$, so we conclude that $\KK=\mathcal{U}$. In particular, we deduce that $\KK$ is a connected component of $\mathcal{Z}^{-1}(U)$.
\end{rmk}

\begin{rmk}
\label{rmk_d=4or5}
If $d=4$, then $\Ku(X)\cong \D(C)$,
where $C$ is a smooth curve of genus $2$ (cf.\ \cite[Theorem 4.4]{Kuz}). By \cite[Theorem 2.7]{Macri}, we have $\KK\cong\tilde{\mathrm{GL}}^+_2(\R) \cong \Stab(\Ku(X))$.

If $d=5$, then $\Ku(X)\cong \D(K(3))$,
where $K(3)$ is the Kronecker quiver with three arrows (cf.\ \cite[Theorem 4.2]{Kuz}). In this case, by \cite[Theorem 1.1]{DK19}, it is known that $\Stab(\Ku(X)) \cong \HH \times \C$, where $\HH:=\lbrace z \in \C: \Im z >0 \rbrace$.
\end{rmk}

\section{Hilbert scheme of lines and stability}

In this section, we study the stability of ideal sheaves of lines in $X$ and of their dual object; then we prove Theorem \ref{Fanolines_modspace}.

\subsection{Lines and stability}

Let $X$ be a Fano threefold of Picard rank $1$ and index $2$. Given a line $\ell \subset X$, we denote by $\II_\ell$ the ideal sheaf of $\ell$ in $X$. By \cite[Proposition 3.12]{Kuz2}, we know that $\II_\ell \in \Ku(X)$. The Chern character of $\II_{\ell}$ is 
$$\ch(\II_{\ell})=(1,0,-\frac{1}{d}H^{2}, 0)$$ and the twisted Chern character with respect to $-\frac{1}{2}$ till degree $2$ is
$$\ch^{-\frac{1}{2}}_{\leq 2}(\II_{\ell})=(1,\frac{1}{2}H,\frac{d-8}{8d}H^{2}).$$

\begin{prop}
\label{lemma_idealsheafstab}
The ideal sheaf $\II_\ell$ of a line $\ell \subset X$ is $\sigma(\alpha,-\frac{1}{2})$-stable for every $0 < \alpha < \frac{1}{2}$. 
\end{prop}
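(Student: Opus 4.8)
The plan is to use Proposition~\ref{prop_inducstab} indirectly: since $\II_\ell \in \A(\alpha,-\frac{1}{2})$ by the computation of $\ch^{-\frac{1}{2}}_{\leq 2}(\II_\ell)$ (one checks $\mu_{\alpha,-\frac{1}{2}}$-positivity and membership in $\Coh^{-\frac{1}{2}}(X)$, then that it lands in the tilted heart $\Coh^0_{\alpha,-\frac{1}{2}}(X)$), it suffices to rule out destabilizing subobjects and quotients in $\A(\alpha,-\frac{1}{2})$. First I would record the numerical invariants: with $v:=\ch^{-\frac{1}{2}}_{\leq 2}(\II_\ell)=(1,\frac{1}{2}H,\frac{d-8}{8d}H^2)$, compute $Z(\alpha,-\frac{1}{2})(\II_\ell)$ and note $\Im Z_{\alpha,-\frac{1}{2}}(\II_\ell)=H^2\ch_1^{-1/2}(\II_\ell)=\frac{1}{2}H^3>0$, so $\II_\ell$ has finite slope. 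The key point is that the imaginary part is the \emph{minimal positive value} it can take: any subobject or quotient $F$ in $\Coh^{-\frac{1}{2}}(X)$ has $\Im Z_{\alpha,-\frac{1}{2}}(F)=H^2\ch_1^{-1/2}(F)\in \frac{1}{2}\Z_{\geq 0}$, so in a would-be destabilizing sequence $0\to F\to \II_\ell\to G\to 0$ either $F$ or $G$ has vanishing imaginary part.

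The main work is then a case analysis on short exact sequences in $\A(\alpha,-\frac{1}{2})$ destabilizing $\II_\ell$. By the slope/imaginary-part observation, a destabilizing subobject $F$ must have $\Im Z(F)=0$, hence (since $Z$ restricted to $\Ku(X)$ is a genuine stability function, by Theorem~\ref{thm_U}) $F=0$, contradiction; symmetrically for the quotient. Wait — more carefully: the subtlety is that $\II_\ell$ might fail to be \emph{stable} only through a subobject of the same slope, i.e.\ with $\mu_\sigma(F)=\mu_\sigma(\II_\ell)$. So I would argue: if $0\neq F\subsetneq \II_\ell$ in $\A(\alpha,-\frac{1}{2})$ with $\mu_\sigma(F)\geq \mu_\sigma(\II_\ell)$, then comparing imaginary parts forces $\Im Z(F)>0$ to satisfy $\Im Z(F)\leq \Im Z(\II_\ell)=\frac{1}{2}H^3$, so $\Im Z(F)=\frac{1}{2}H^3$ and $\Im Z(G)=0$ where $G=\II_\ell/F$. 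Then $G\in\Ku(X)$ with $\Im Z(G)=0$ forces $G=0$ (as $Z|_{\Ku(X)}$ is a stability function and $G$ is an object of the heart $\A(\alpha,-\frac{1}{2})$, a nonzero $G$ with $\Im Z(G)=0$ would need $\Re Z(G)<0$, but then $G$ has infinite slope, contradicting $\mu_\sigma(G)\geq \mu_\sigma(F)>\mu_\sigma(\II_\ell)$ in the semistability inequality chain). Hence $F=\II_\ell$, a contradiction, giving stability.

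Alternatively — and this is probably the cleaner route — I would pass through $\Coh^0_{\alpha,-\frac{1}{2}}(X)$: by \cite[Lemma 4.3]{BLMS} an object of $\Ku(X)$ lying in $\Coh^0_{\alpha,-\frac{1}{2}}(X)$ is $\sigma(\alpha,-\frac{1}{2})$-(semi)stable as soon as it is $\sigma^0_{\alpha,-\frac{1}{2}}$-(semi)stable in $\D(X)$, and then it suffices to show $\II_\ell$ is $\sigma_{\alpha,-\frac{1}{2}}$-stable in $\Coh^{-\frac{1}{2}}(X)$ for the relevant range (after the second tilt at $\mu=0$, stability is preserved up to objects with vanishing central charge, which $\Ku(X)$ does not contain). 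So the reduction is: show $\II_\ell$ is $\sigma_{\alpha,-\frac{1}{2}}$-stable as an object of $\Coh^{-\frac{1}{2}}(X)$ for $0<\alpha<\frac{1}{2}$. For this, note $\II_\ell$ is the kernel of $\OO_X\twoheadrightarrow \OO_\ell$, and $\OO_X$, $\OO_\ell$ have well-understood positions; a wall-crossing / Bogomolov-type estimate using the quadratic form $Q=\Delta_H$ and the smallness of $\alpha$ shows no wall for $v=\ch_{\leq 2}(\II_\ell)$ meets the vertical line $\beta=-\frac{1}{2}$ in the range $\alpha<\frac{1}{2}$: any destabilizer $E\hookrightarrow \II_\ell$ would have $0\leq H^2\ch_1^{-1/2}(E)<\frac{1}{2}H^3$, hence $=0$, reducing again to the imaginary-part argument. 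The main obstacle is handling the boundary case $\Im Z(E)=0$ rigorously — ensuring that the only objects of $\Coh^{-\frac{1}{2}}(X)$ with zero imaginary part that could appear (torsion sheaves supported in dimension $\leq 1$, and shifts of $\mu_H$-semistable sheaves of slope exactly $-\frac{1}{2}$) cannot be sub or quotient of $\II_\ell$ with the wrong slope inequality; this is where one invokes that $\ell$ is a line (degree-$1$ rational curve) and $\Pic(X)=\Z$ to pin down $\ch_1$, and where the explicit constraint $\alpha<\frac{1}{2}$ is actually used.
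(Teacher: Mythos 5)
Your second, ``cleaner'' route is the strategy the paper actually follows (reduce to $\sigma_{\alpha,-\frac{1}{2}}$-stability in $\Coh^{-\frac{1}{2}}(X)$, exploit that $H^{2}\ch_{1}^{-\frac{1}{2}}$ takes values in $\tfrac{1}{2}H^{3}\Z_{\geq 0}$ on that heart, and pass to $\sigma(\alpha,-\frac{1}{2})$ at the end), but both versions as written have genuine gaps. The first route does not work: a destabilizing sequence for $\sigma(\alpha,-\frac{1}{2})$ lives in the \emph{double}-tilted heart $\AA(\alpha,-\frac{1}{2})\subset\Coh^{0}_{\alpha,-\frac{1}{2}}(X)$, where the imaginary part of the stability function $Z(\alpha,-\frac{1}{2})=-iZ_{\alpha,-\frac{1}{2}}$ is $-\Re Z_{\alpha,-\frac{1}{2}}$, not $H^{2}\ch_{1}^{-\frac{1}{2}}$. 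Subobjects there can perfectly well have negative $\ch_{1}^{-\frac{1}{2}}$ (they are built from shifts $A[1]$ of non-positive-slope objects of $\Coh^{-\frac{1}{2}}(X)$), and $-\Re Z_{\alpha,-\frac{1}{2}}$ is neither quantized nor bounded below by a positive constant, so the ``minimal positive imaginary part'' argument collapses. Relatedly, your claim of $\mu_{\alpha,-\frac{1}{2}}$-positivity is false: $\mu_{\alpha,-\frac{1}{2}}(\II_\ell)=\frac{d-8-4d\alpha^{2}}{4d}<0$ for $d\leq 5$, so it is $\II_\ell[1]$, not $\II_\ell$, that lies in $\Coh^{0}_{\alpha,-\frac{1}{2}}(X)$.

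In the second route three things are missing. First, a base case: a wall-crossing argument needs a region where stability is already known; the paper gets this from slope stability of the torsion-free rank-one sheaf $\II_\ell$ together with \cite[Lemma 2.7]{BMS} for $\alpha\gg 0$, and then must show there are \emph{no} walls on the whole ray $\beta=-\frac{1}{2}$, $\alpha>0$ (not just $\alpha<\frac{1}{2}$) to connect back down --- the bound $\alpha<\frac{1}{2}$ plays no role in the wall analysis and is only the range where $\sigma(\alpha,-\frac{1}{2})$ is defined on $\Ku(X)$. Second, your step ``$0\leq H^{2}\ch_{1}^{-\frac{1}{2}}(E)<\frac{1}{2}H^{3}$, hence $=0$'' silently discards the case $H^{2}\ch_{1}^{-\frac{1}{2}}(E)=\frac{1}{2}H^{3}$, i.e.\ the quotient having vanishing $\ch_{1}^{-\frac{1}{2}}$; this is the $b=1$ case in the paper and it must be (and is) excluded by a separate computation with the slope equality and the discriminant bounds $0\leq\Delta_{H}(E),\Delta_{H}(F)\leq\Delta_{H}(\II_\ell)$. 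Third, that numerical case analysis ($b=0$ and $b=1$, with sub-cases on $a$) is the actual content of the proof and is only gestured at, not carried out. The concluding descent from $\sigma_{\alpha,-\frac{1}{2}}$-stability to $\sigma^{0}_{\alpha,-\frac{1}{2}}$- and then $\sigma(\alpha,-\frac{1}{2})$-stability is correctly identified and matches the paper.
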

\begin{proof}
The sheaf $\II_{\ell}$ is  slope stable, because it is a torsion-free sheaf of rank $1$. As $\mu_{H}(\II_{\ell})=0>-\frac{1}{2}$, we have $\II_{\ell}\in \Coh^{-\frac{1}{2}}(X)$. Since $H^{2}\ch_{1}^{-\frac{1}{2}}(\II_{\ell})>0$, it follows from \cite[Lemma 2.7]{BMS} that $\II_{\ell}$ is $\sigma_{\alpha, -\frac{1}{2}}$-stable for any $\alpha\gg 0$.

In the next we show that there are no walls for the stabilty of $\II_\ell$ with respecto to $\sigma_{\alpha,-\frac{1}{2}}$. A wall would be given by a short exact sequence in the heart $\Coh^{-\frac{1}{2}}(X)$ of the form
$$
0\rightarrow E\rightarrow \II_{\ell}\rightarrow F \rightarrow 0,
$$
such that the following conditions hold:
\begin{enumerate}
\item[(i)] $\mu_{\alpha, -\frac{1}{2}}(E)=\mu_{\alpha, -\frac{1}{2}}(\II_{\ell})=\mu_{\alpha, -\frac{1}{2}}(F)$;
\item[(ii)] $\Delta_H(E)\geq 0$, $\Delta_H(F)\geq 0$;
\item[(iii)]  $\Delta_H(E)\leq \Delta_H(\II_{\ell})$, $\Delta_H(E)\leq \Delta_H(\II_{\ell})$.
\end{enumerate}
The truncated twisted characters of $E$ and $F$ have to satisfy
\begin{equation*}
(1,\frac{1}{2}H,\frac{d-8}{8d}H^{2})
=(a,\frac{b}{2}H, \frac{c}{8d}H^{2})+(1-a,\frac{1-b}{2}H,\frac{d-8-c}{8d}H^{2}),
\end{equation*}
for some $a, b, c \in \Z$. As $E$ and $F$ are in $\Coh^{-\frac{1}{2}}(X)$, we have
$$b \geq 0 \quad \text{and} \quad 1-b \geq 0,$$
i.e.\ $b=0$ or $b=1$.

As $\mu_{\alpha, -\frac{1}{2}}(\II_{\ell})=\frac{d-8-4d\alpha^{2}}{4d}$ 
and $\Delta_H(\II_{\ell})=\frac{2}{d}(H^3)^2$, dividing the discriminant by $d^2$, the previous conditions are 
\begin{enumerate}
\item[(i)] $\frac{1}{b}(\frac{c}{4d}-\alpha^{2}a)=\frac{d-8-4d\alpha^{2}}{4d}=\frac{1}{1-b}(\frac{d-8-c}{4d}-\alpha^{2}(1-a))$;
\item[(ii)] $(\frac{b}{2})^{2}-\frac{ac}{4d}\geq 0$, $(\frac{1-b}{2})^{2}+\frac{(1-a)(c+8-d)}{4d}\geq 0$;
\item[(iii)]  $(\frac{b}{2})^{2}-\frac{ac}{4d}\leq \frac{2}{d}$, $(\frac{1-b}{2})^{2}+\frac{(1-a)(c+8-d)}{4d}\leq \frac{2}{d}$.
\end{enumerate}

Assume $b=0$. If $a \neq 0$, we get 
$$4d \alpha^2=\frac{c}{a}>0 \quad \text{and} \quad -8 \leq ac \leq 0$$
which is impossible. If $a=0$, then $c=0$ by the first equation. If $E$ has twisted character $(1,\frac{1}{2}H,\frac{d-8}{8d}H^{2})$, then $\II_\ell$ has a subobject with the same slope for every $\alpha >0$. This contradicts the stability of $\II_\ell$ above the wall. In the other case, $\II_\ell$ would have a subobject with infinite slope, in contradiction with the fact that it is stable for $\alpha$ large. 

If $b=1$ and $a \neq 1$, we get a contradiction from
$$4d \alpha^2=\frac{8-d+c}{a-1}>0 \quad \text{and} \quad -8 \leq (a-1)(8-d+c) \leq 0.$$
The case $b=a=1$ can be excluded as done for $b=a=0$.

Since there are no values of $a, b, c$ satisfying all the required conditions, we deduce that $\II_{\ell}$ is $\sigma_{\alpha, -\frac{1}{2}}$-stable for any $\alpha>0$. As a consequence, $\II_\ell$ is $\sigma_{\alpha,-\frac{1}{2}}^0$-stable and thus stable with respect to $\sigma(\alpha,-\frac{1}{2})$.
\end{proof}

As explained in \cite[Section 3.6]{Kuz2}, there is another object in $\Ku(X)$ naturally associated to a line $\ell \subset X$. Indeed, consider the triangle
\begin{equation}
\label{eq_defJ}   
\OO_X(-1)[1] \to \JJ_\ell \to \OO_\ell(-1),
\end{equation}
where $\OO_X(1):=\OO_X(H)$.
By \cite[Lemma 3.4]{Kuz2}, we have $\JJ_\ell \in \Ku(X)$. The Chern character of $\JJ_\ell$ is
$$\ch(\JJ_\ell)=(-1,H,\frac{2-d}{2d}H^2,\frac{d-6}{6d}H^3).$$
We study the stability of this object, which will be used in the proof of Theorem \ref{thm_conncomp}.

\begin{lemma}
\label{lemma_stabJontop}
Set $\beta=-\frac{1}{2}$. The complex $\JJ_\ell$ is $\sigma_{\alpha,\beta}$-stable for $\alpha \gg 0$.
\end{lemma}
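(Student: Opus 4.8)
The plan is to rule out destabilizing subobjects directly, as in the proof of Proposition~\ref{lemma_idealsheafstab}; the new feature is that $\JJ_\ell$ is a genuine two-term complex in $\Coh^{-\frac{1}{2}}(X)$. First I would record that $\JJ_\ell\in\Coh^{-\frac{1}{2}}(X)$: in \eqref{eq_defJ} one has $\mu_H(\OO_X(-H))=-1<-\frac{1}{2}$, so $\OO_X(-1)[1]\in\Coh^{-\frac{1}{2}}(X)$, and $\OO_\ell(-1)$ is a torsion sheaf, hence lies in $\Coh^{-\frac{1}{2}}(X)$ as well, so $\JJ_\ell$ is an extension inside the heart, with $\mathcal{H}^{-1}(\JJ_\ell)=\OO_X(-H)$ and $\mathcal{H}^{0}(\JJ_\ell)=\OO_\ell(-1)$. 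A short computation gives $\ch^{-\frac{1}{2}}_{\leq 2}(\JJ_\ell)=(-1,\tfrac{1}{2}H,\tfrac{8-d}{8d}H^{2})$, hence $\Im Z_{\alpha,-\frac{1}{2}}(\JJ_\ell)=H^2\ch_1^{-\frac{1}{2}}(\JJ_\ell)=\tfrac{1}{2}H^{3}$.

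The key remark is that $\tfrac{1}{2}H^{3}$ is the smallest positive value of $\Im Z_{\alpha,-\frac{1}{2}}$ on $\Coh^{-\frac{1}{2}}(X)$: since $\ch_1^{-\frac{1}{2}}(E)=\ch_1(E)+\tfrac{1}{2}\ch_0(E)H$ and $\text{Pic}(X)=\Z H$, the function $\Im Z_{\alpha,-\frac{1}{2}}$ takes values in $\tfrac{1}{2}\Z_{\geq 0}\,H^{3}$. Therefore every nonzero proper subobject $A\subsetneq\JJ_\ell$ in $\Coh^{-\frac{1}{2}}(X)$ has $\Im Z_{\alpha,-\frac{1}{2}}(A)\in\{0,\tfrac{1}{2}H^{3}\}$. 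If $\Im Z_{\alpha,-\frac{1}{2}}(A)=\tfrac{1}{2}H^{3}$, then $\Im Z_{\alpha,-\frac{1}{2}}(\JJ_\ell/A)=0$, so $\mu_{\alpha,-\frac{1}{2}}(\JJ_\ell/A)=+\infty>\mu_{\alpha,-\frac{1}{2}}(A)$ and $A$ does not violate stability. Hence it is enough to show that $\JJ_\ell$ has no nonzero subobject $A$ in $\Coh^{-\frac{1}{2}}(X)$ with $\Im Z_{\alpha,-\frac{1}{2}}(A)=0$. This reduction is independent of $\alpha$, so the argument will in fact give stability for every $\alpha>0$.

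Next I would constrain such a hypothetical $A$. From $\mathcal{H}^{-1}(A)\hookrightarrow\mathcal{H}^{-1}(\JJ_\ell)=\OO_X(-H)$, a nonzero $\mathcal{H}^{-1}(A)$ would be torsion-free of rank one and $\mu_H$-slope $\leq-1<-\frac{1}{2}$, hence $\Im Z_{\alpha,-\frac{1}{2}}(\mathcal{H}^{-1}(A)[1])>0$; this is impossible since $\mathcal{H}^{-1}(A)[1]$ is a subobject of $A$ and $\Im Z_{\alpha,-\frac{1}{2}}(A)=0$. So $A$ is a sheaf, and being a sheaf in $\Coh^{-\frac{1}{2}}(X)$ all its $\mu_H$-Harder--Narasimhan factors have slope $>-\frac{1}{2}$; then $H^2\ch_1^{-\frac{1}{2}}(A)=0$ forces $A$ to be torsion supported in dimension $\leq 1$. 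For such $A$ one has $\Ext^{\leq 1}_X(A,\OO_X(-H))=0$, so applying $\Hom_X(A,-)$ to \eqref{eq_defJ} shows that the composite $A\hookrightarrow\JJ_\ell\to\OO_\ell(-1)$ is a monomorphism of sheaves (its kernel is a sheaf mapping monomorphically to $\OO_X(-H)[1]$, hence zero); since $\OO_\ell(-1)$ is pure of dimension one, $A\cong\OO_\ell(-j)$ for some integer $j\geq 1$.

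It remains to prove $\Hom_X(\OO_\ell(-j),\JJ_\ell)=0$ for $j\geq 1$, which would complete the proof. Applying $\Hom_X(\OO_\ell(-j),-)$ to \eqref{eq_defJ} and using $\Ext^1_X(\OO_\ell(-j),\OO_X(-H))=0$ identifies $\Hom_X(\OO_\ell(-j),\JJ_\ell)$ with the kernel of $c\circ-\colon\Hom_X(\OO_\ell(-j),\OO_\ell(-1))\to\Ext^2_X(\OO_\ell(-j),\OO_X(-H))$, where $c\colon\OO_\ell(-1)\to\OO_X(-H)[2]$ is the connecting morphism of \eqref{eq_defJ}. As $\ell\cong\P^1$ is a line, $\det N_{\ell/X}\cong\OO_\ell$ (since $\deg N_{\ell/X}=-K_X\cdot\ell-2=0$), so Grothendieck duality identifies both groups with $H^0(\P^1,\OO_{\P^1}(j-1))$ and identifies $c\circ-$ with multiplication by a scalar; this scalar is nonzero because $\JJ_\ell$ is indecomposable (e.g.\ $\JJ_\ell\in\Ku(X)$, whereas $\OO_\ell(-1)$ and $\OO_X(-H)[1]$ are not), so $c\circ-$ is injective and $\Hom_X(\OO_\ell(-j),\JJ_\ell)=0$. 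I expect the main obstacles to be the classification step (excluding mixed-dimensional or embedded torsion and identifying $A$ with a negative twist of $\OO_\ell$) and the duality bookkeeping behind the injectivity of $c\circ-$. A shorter alternative would be to identify $\JJ_\ell\cong R\lHom_X(\II_\ell,\OO_X)(-H)[1]$ by dualizing the structure sequence $0\to\II_\ell\to\OO_X\to\OO_\ell\to 0$, and then to combine the $\sigma_{\alpha,-\frac{1}{2}}$-stability of $\II_\ell$ established in the proof of Proposition~\ref{lemma_idealsheafstab} with the facts that $E\mapsto R\lHom_X(E,\OO_X)[1]$ interchanges $\sigma_{\alpha,\beta}$- and $\sigma_{\alpha,-\beta}$-stability and that $-\otimes\OO_X(-H)$ intertwines $\sigma_{\alpha,\beta}$- with $\sigma_{\alpha,\beta-1}$-stability.
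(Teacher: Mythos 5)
Your proof is correct, but it takes a genuinely different route from the one in the paper, and in fact proves more. The paper argues directly with a putative destabilizing sequence $0\to P\to\JJ_\ell\to Q\to 0$ for $\alpha\gg 0$, runs a case analysis on $\rk(P)\in\{\geq 1,\,-1,\,0\}$ using the limiting slope $-\rk/\ch_1^{-\frac{1}{2}}$ and the cohomology sheaves $\HH^{\bullet}(P)$, $\HH^{\bullet}(Q)$, and disposes of the last case by a commutative diagram forcing the connecting morphism $\OO_\ell(-1)\to\OO_X(-1)[2]$ to factor through a map that vanishes for support reasons; the extension of stability to small $\alpha$ is then a separate wall computation (Proposition \ref{prop_stabJ}). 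You instead observe that $\Im Z_{\alpha,-\frac{1}{2}}$ takes values in $\frac{1}{2}\Z_{\geq 0}H^3$ on $\Coh^{-\frac{1}{2}}(X)$ and that $\JJ_\ell$ realizes the minimal positive value, so stability reduces to excluding subobjects with $\Im Z=0$; you classify these as $\OO_\ell(-j)$, $j\geq 1$, and exclude them by computing $\Hom(\OO_\ell(-j),\JJ_\ell)$ via the extension class of \eqref{eq_defJ} and $\det N_{\ell/X}\cong\OO_\ell$. Your reduction is independent of $\alpha$, so it yields $\sigma_{\alpha,-\frac{1}{2}}$-stability for all $\alpha>0$ in one stroke and makes the wall analysis of Proposition \ref{prop_stabJ} redundant for this object; the price is genuine geometric input (the normal bundle computation, the non-splitting of \eqref{eq_defJ}), whereas the paper's argument is pure tilt-stability bookkeeping. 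Two small points to tighten: the vanishing of the kernel of $A\to\OO_\ell(-1)$ is cleanest via $\Hom(K,\OO_X(-H)[1])=\Ext^1(K,\OO_X(-H))=0$ for $K$ supported in dimension $\leq 1$ (the same vanishing you already use), and the identification of $c\circ-$ with scalar multiplication should be justified by noting that all the relevant local $\mathcal{E}xt$ sheaves are concentrated in a single cohomological degree, so the composition pairing is computed on $H^0$ of the sheaf-level pairing. Your closing alternative via $\JJ_\ell\cong\mathsf{R}\lHom(\II_\ell(1),\OO_X)[1]$ is also consistent with the paper, which records exactly this description of $\JJ_\ell$ in the remark following the proof of Theorem \ref{Fanolines_modspace}.
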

\begin{proof}
Note that $\OO_X(-1)[1]$ and $\OO_\ell(-1)$ belong to the heart $\Coh^\beta(X)$. Thus the same property holds for their extension $\JJ_\ell$.

Assume that $\JJ_\ell$ is not stable. Then there exists a destabilizing sequence in $\Coh^\beta(X)$ of the form
$$0 \to P \to \JJ_\ell \to Q \to 0,$$
where $P$ and $Q$ are $\mu_{\alpha,\beta}$-semistable for $\alpha \gg 0$, with $\mu_{\alpha,\beta}(P) > \mu_{\alpha,\beta}(Q)$. Recall that we can check the stability with rispect to $\sigma_{\alpha,\beta}$ for $\alpha \to \infty$ using the slope $\frac{\ch_1^{\beta}(-)}{\rk(-)}$, as
$$\lim_{\alpha \to \infty} \frac{2}{\alpha^2}\frac{\ch_2^{\beta}(-)-\frac{1}{2}\alpha^2\rk(-)}{\ch_1^\beta(-)}=-\frac{\rk(-)}{\ch_1^\beta(-)}$$
and taking the opposite of the inverse does not change the inequalities.

Consider the cohomology sequence
$$0 \to \HH^{-1}(P) \to \HH^{-1}(\JJ_\ell) \to \HH^{-1}(Q) \to \HH^0(P) \to \HH^0(\JJ_\ell) \to \HH^0(Q) \to 0,$$
where $\HH^{-1}(\JJ_\ell)=\OO_X(-1)$ and $\HH^0(\JJ_\ell)=\OO_\ell(-1)$ by definition. Then $\rk(\HH^{-1}(P))=1$ as it is a subsheaf of a line bundle, and $\rk(\HH^0(Q))=0$ as it is the quotient of a torsion sheaf. Since $\rk(\HH^0(P)) \geq 0$, we have $\rk(P) \geq -1$.

If $\rk(P) \geq 1$, then $\rk(Q) \leq -2$ as $\rk(P)+\rk(Q)=\rk(\JJ_\ell)=-1$. Since $Q$ is a destabilizing quotient for $\JJ_\ell$, we have the relation
$$\frac{\ch_1^{-\frac{1}{2}}(\JJ_\ell)}{\rk(\JJ_\ell)}= -\frac{1}{2} \geq \frac{\ch_1^{-\frac{1}{2}}(Q)}{\rk(Q)} \geq -\frac{\ch_1^{-\frac{1}{2}}(Q)}{2}$$
which implies $\ch_1^{-\frac{1}{2}}(Q) \geq 1$. This contradicts the fact that $0 \leq \ch_1^{-\frac{1}{2}}(Q) \leq \ch_1^{-\frac{1}{2}}(\JJ_\ell)=\frac{1}{2}$. It follows that the rank of $P$ can be equal to $0$ or $-1$. 

Assume that $P$ has rank $-1$. Then $\HH^{-1}(Q)=0$, because $Q$ is a torsion object in the heart. Thus we have the sequence 
$$0 \to \HH^0(P) \to \OO_\ell(-1) \to Q \to 0,$$
where $\HH^0(P)$ is a sheaf supported on the line $\ell$. Note that $\OO_\ell(-1)$ has rank $1$ and it is torsion free as a sheaf on $\ell$. It follows that $\HH^0(P)$ is a rank $1$ torsion free sheaf on $\ell$. As a consequence, we have 
$\ch_{\leq 2}(\HH^0(P))=\ch_{\leq 2}(\OO_\ell(-1))$, which implies 
$$\ch_{\leq 2}(P)=\ch_{\leq 2}(\JJ_\ell) \quad \text{and} \quad \mu_{\alpha,\beta}(Q)=+\infty \text{ for }\alpha \gg 0.$$
This is impossible for a destabilizing sequence.

Assume now that $P$ has rank $0$. If $\ch_1(P) \neq 0$, then $\mu_{\alpha,\beta}(P)$ would be a finite number, while $\mu_{\alpha,\beta}(Q)=+\infty$ for $\alpha \gg 0$, which is impossible. If $\ch_1(P)=\rk(P)=0$, then we have $P \cong \HH^0(P)$, $\HH^{-1}(Q) \cong \OO_X(-1)$ and the sequence
$$0 \to P \to \OO_\ell(-1) \to \HH^0(Q) \to 0.$$
As $\OO_\ell(-1)$ is locally free on $\ell$ and $P$ is a subsheaf of $\OO_\ell(-1)$, we must have $\ch_2(P)=\ch_2(\OO_\ell(-1))$; thus $\HH^0(Q)$ is supported in codimension $3$. On the other hand, consider the commutative diagram
$$
\xymatrix{
0 \ar[r] \ar[d]& P \ar[r]^\cong \ar[d]& P \ar[r] \ar[d]& \ar[d]0\\
\OO_X(-1)[1] \ar[r] \ar[d]^\cong & \JJ_\ell \ar[r] \ar[d] & \OO_\ell(-1) \ar[r]^{\psi} \ar[d]^{\phi} & \OO_X(-1)[2] \ar[d]^\cong\\
\OO_X(-1)[1] \ar[r] & Q \ar[r]& \HH^0(Q) \ar[r]^\gamma& \OO_X(-1)[2],
}
$$
where $\psi, \phi \neq 0$. Note that $\Hom(\HH^0(Q),\OO_X(-1)[2])=0$, as $\HH^0(Q)$ is supported on points and by Serre duality. It follows that $\gamma =0$, which is impossible. This proves $\JJ_\ell$ is stable as claimed.
\end{proof}

\begin{prop}
\label{prop_stabJ}
The complex $\JJ_\ell$ is $\sigma(\alpha,-\frac{1}{2})$-stable for every $0 < \alpha < \frac{1}{2}$.
\end{prop}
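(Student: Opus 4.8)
The plan is to follow the template of the proof of Proposition~\ref{lemma_idealsheafstab}: first upgrade the ``$\alpha\gg 0$'' stability of Lemma~\ref{lemma_stabJontop} to $\sigma_{\alpha,-\frac{1}{2}}$-stability for \emph{every} $\alpha>0$ by ruling out walls, and then transfer this to $\sigma(\alpha,-\frac{1}{2})$-stability for $0<\alpha<\frac{1}{2}$. As preliminary bookkeeping, from $\ch(\JJ_\ell)=(-1,H,\frac{2-d}{2d}H^2,\frac{d-6}{6d}H^3)$ one computes $\ch^{-\frac{1}{2}}_{\leq 2}(\JJ_\ell)=(-1,\frac{1}{2}H,\frac{8-d}{8d}H^2)$ and $\Delta_H(\JJ_\ell)=\frac{2}{d}(H^3)^2$; in particular $\Im Z_{\alpha,-\frac{1}{2}}(\JJ_\ell)=\frac{1}{2}H^3\neq 0$ and $\mu_{\alpha,-\frac{1}{2}}(\JJ_\ell)=\alpha^2+\frac{8-d}{4d}>0$ for $1\leq d\leq 5$. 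These numerical data coincide with those of $\II_\ell$ up to the sign of the rank, so the wall analysis will be essentially identical.

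Next I would exclude walls. A wall for $\JJ_\ell$ with respect to $\sigma_{\alpha,-\frac{1}{2}}$ is a short exact sequence $0\to E\to \JJ_\ell\to F\to 0$ in $\Coh^{-\frac{1}{2}}(X)$ with $E$ and $F$ both $\sigma_{\alpha,-\frac{1}{2}}$-semistable and $\mu_{\alpha,-\frac{1}{2}}(E)=\mu_{\alpha,-\frac{1}{2}}(\JJ_\ell)=\mu_{\alpha,-\frac{1}{2}}(F)$. Writing $\ch^{-\frac{1}{2}}_{\leq 2}(E)=(a,\frac{b}{2}H,\frac{c}{8d}H^2)$ for integers $a,b,c$, the requirement $E,F\in\Coh^{-\frac{1}{2}}(X)$ forces $b\geq 0$ and $1-b\geq 0$, hence $b\in\{0,1\}$; thus exactly one of $E$, $F$ has vanishing $\Im Z_{\alpha,-\frac{1}{2}}$. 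For the non-degenerate numerical walls (namely $b=1$, $a\neq -1$, or $b=0$, $a\neq 0$) I would solve the slope equality for $\alpha$, obtaining $\alpha^2=\frac{c+d-8}{4d(a+1)}$ respectively $\alpha^2=\frac{c}{4da}$, and then contradict the Bogomolov--Gieseker inequality $\Delta_H\geq 0$ valid for $\sigma_{\alpha,-\frac{1}{2}}$-semistable objects: when $b=1$ one computes $\Delta_H(F)=\frac{(a+1)(8-d-c)}{4d}(H^3)^2$, so $\Delta_H(F)\geq 0$ is incompatible with $\alpha^2>0$; when $b=0$ one computes $\Delta_H(E)=-\frac{ac}{4d}(H^3)^2$, so $\Delta_H(E)\geq 0$ is incompatible with $\alpha^2>0$. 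The remaining (degenerate) cases $a=-1$, $b=1$ and $a=0$, $b=0$ give $\ch_{\leq 2}(E)=\ch_{\leq 2}(\JJ_\ell)$ and $\ch_{\leq 2}(F)=0$, or the symmetric situation: here one argues that $\JJ_\ell$ has no proper subobject with $\Im Z_{\alpha,-\frac{1}{2}}=0$, since such a subobject has slope $+\infty$ and, as $\Coh^{-\frac{1}{2}}(X)$ does not depend on $\alpha$, its existence would contradict the $\sigma_{\alpha,-\frac{1}{2}}$-semistability of $\JJ_\ell$ for $\alpha\gg 0$ from Lemma~\ref{lemma_stabJontop}; a zero-dimensional quotient of $\JJ_\ell$ (such as $\OO_\ell(-1)\twoheadrightarrow\OO_p$) also has slope $+\infty$ and hence does not destabilize $\JJ_\ell$. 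In this step the cohomology description $\HH^{-1}(\JJ_\ell)=\OO_X(-1)$, $\HH^0(\JJ_\ell)=\OO_\ell(-1)$ from Lemma~\ref{lemma_stabJontop} is what rules out destabilizing sub- or quotient-objects supported in positive codimension.

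Once all walls in $\{\,\alpha>0,\ \beta=-\frac{1}{2}\,\}$ are excluded, local finiteness of walls together with the constancy of stability inside chambers (\cite[Proposition~B.5]{BMS}) imply that $\JJ_\ell$ is $\sigma_{\alpha,-\frac{1}{2}}$-stable for every $\alpha>0$. Since $\mu_{\alpha,-\frac{1}{2}}(\JJ_\ell)>0$ we have $\JJ_\ell\in\Coh^0_{\alpha,-\frac{1}{2}}(X)$, and as tilting a weak stability condition preserves stability of objects with nonzero central charge (see Proposition~\ref{prop_secondtiltstab} and the proof of \cite[Proposition~2.15]{BLMS}), $\JJ_\ell$ is $\sigma^0_{\alpha,-\frac{1}{2}}$-stable; finally, since $\JJ_\ell\in\Ku(X)$ and $\sigma(\alpha,-\frac{1}{2})$ is the restriction of $\sigma^0_{\alpha,-\frac{1}{2}}$ to $\AA(\alpha,-\frac{1}{2})=\Coh^0_{\alpha,-\frac{1}{2}}(X)\cap\Ku(X)$, the object $\JJ_\ell$ is $\sigma(\alpha,-\frac{1}{2})$-stable for $0<\alpha<\frac{1}{2}$, exactly as in the last lines of the proof of Proposition~\ref{lemma_idealsheafstab}.

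The step I expect to be the main obstacle is the careful treatment of the degenerate walls: one has to be sure that, as $\alpha$ decreases, $\JJ_\ell$ acquires neither a destabilizing subobject with vanishing imaginary part of the central charge nor one supported in positive codimension. Controlling this is precisely the purpose of the two-term cohomology analysis already carried out in Lemma~\ref{lemma_stabJontop}; the rest of the argument is routine Chern-character bookkeeping, parallel to the ideal-sheaf case treated in Proposition~\ref{lemma_idealsheafstab}.
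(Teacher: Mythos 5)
Your proposal is correct and follows essentially the same route as the paper: starting from the large-volume stability of Lemma \ref{lemma_stabJontop}, ruling out walls on the line $\beta=-\frac{1}{2}$ via the integrality of the twisted Chern characters together with the Bogomolov--Gieseker inequality (the incompatibility of $\alpha^2>0$ with $\Delta_H\geq 0$ is exactly the paper's contradiction, just phrased without the intermediate bound on $ac$), handling the degenerate cases $\ch_{\leq 2}(E)=\ch_{\leq 2}(\JJ_\ell)$ or $\ch_{\leq 2}(E)=0$ by the same appeal to stability for $\alpha\gg 0$, and then passing through $\sigma^0_{\alpha,-\frac{1}{2}}$ to $\sigma(\alpha,-\frac{1}{2})$ as in Proposition \ref{lemma_idealsheafstab}. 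Your write-up is in fact somewhat more explicit than the paper's (which compresses the $b=1$ case into ``can be excluded similarly''), but there is no substantive difference in method.
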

\begin{proof}
Set $\beta=-\frac{1}{2}$. By Lemma \ref{lemma_stabJontop}, it is enough to check that there are no walls for the stability of $\JJ_\ell$ with respect to $\sigma_{\alpha,\beta}$. As in the case of the ideal sheaf $\II_\ell$, this is enough to prove the statement.

Note that
$$\ch_{\leq 2}^\beta(\JJ_\ell)=(-1, \frac{1}{2}H, \frac{8-d}{8d}H^2)$$
and
$$\mu_{\alpha,\beta}(\JJ_\ell)=\frac{8-d+4d\alpha^2}{4d}, \quad \Delta_H(\JJ_\ell)=\frac{2}{d}(H^3)^2.$$
Assume there is a short exact sequence in the heart $\Coh^{\beta}(X)$ of the form
$$
0\rightarrow E\rightarrow \JJ_{\ell}\rightarrow F \rightarrow 0,
$$
corresponding to a wall for $\JJ_\ell$. The twisted characters of $E$ and $F$ have to satisfy
\begin{equation*}
(-1,\frac{1}{2}H,\frac{8-d}{8d}H^{2})
=(a,\frac{b}{2}H, \frac{c}{8d}H^{2})+(-1-a,\frac{1-b}{2}H,\frac{8-d-c}{8d}H^{2}),
\end{equation*}
for some $a, b, c \in \Z$. As $E$ and $F$ are in $\Coh^\beta(X)$, we have $b=0$ or $b=1$.

Assume $b=0$. From the equality of slopes and the bounds on $\Delta_H$, we get
$$c=4d\alpha^2 \quad \text{and} \quad -\frac{8d}{3} \leq ac \leq 0.$$
This is impossible, unless $a=0$. But in this case, we have $c=0$ and $\JJ_\ell$ would have either a subobject with infinite slope or a subobject with the same slope. Both possibilities would give a contradiction with Lemma \ref{lemma_stabJontop}. The case $b=1$ can be excluded similarly.
\end{proof}

\subsection{Proof of Theorem \ref{Fanolines_modspace}}
Assume again that $X$ is a Fano threefold of Picard rank one and index two and of degree $1\leq d\leq 5$. We denote by $\Sigma(X)$ the Hilbert scheme parametrizing lines in $X$. Let us summarize what is known on $\Sigma(X)$ (see also \cite[Section 5]{CZ}):
\begin{itemize}
\item If $d=1$, then $\Sigma(X)$ is a projective and irreducible scheme given by a smooth surface with an embedded curve, whose reduced scheme is smooth (see \cite[Theorem 4]{Tihomirov}).
\item If $d \geq 2$, then $\Sigma(X)$ is $2$-dimensional and generically smooth (see \cite[Lemma 2.2.6]{KPS}).  
\item If $d \geq 3$, then $\Sigma(X)$ is a smooth and irreducible surface. In particular:
\begin{itemize}
\item if $d=3$, then $\Sigma(X)$ is a minimal surface of general type;
\item if $d=4$, then $\Sigma(X)$ is an abelian surface;
\item if $d=5$, then $\Sigma(X) \cong \P^2$;
\end{itemize}
(see \cite[Proposition 2.2.10]{KPS}).
\end{itemize}

Making use of the results in the previous section, we describe $\Sigma(X)$ as a moduli space of objects in $\Ku(X)$.

\begin{prop}
\label{lemma_idealsheafstabinK}
The ideal sheaf $\II_\ell$ of a line $\ell \subset X$ is $\sigma$-stable for every $\sigma \in \KK$. 
\end{prop}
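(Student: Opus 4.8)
The plan is to leverage Proposition~\ref{lemma_idealsheafstab}, which already establishes that $\II_\ell$ is $\sigma(\alpha,-\frac{1}{2})$-stable for every $0 < \alpha < \frac{1}{2}$, together with the description of $\KK$ as the $\tilde{\mathrm{GL}}^+_2(\R)$-orbit of $\sigma(\alpha_0,-\frac{1}{2})$. Recall the general principle that stability of an object is invariant under the $\tilde{\mathrm{GL}}^+_2(\R)$-action: if $\sigma = (\PP, Z)$ and $\sigma' = \sigma \cdot \tilde{g} = (\PP', Z')$ with $\tilde{g} = (g, M)$, then $\PP'(\phi) = \PP(g(\phi))$, so the semistable objects of $\sigma'$ are exactly those of $\sigma$, merely with relabelled phases; in particular an object is $\sigma'$-stable if and only if it is $\sigma$-stable. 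Thus the stability of $\II_\ell$ is constant along the entire orbit $\sigma(\alpha_0,-\frac{1}{2}) \cdot \tilde{\mathrm{GL}}^+_2(\R) = \KK$.

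Concretely, I would argue as follows. By Proposition~\ref{lemma_idealsheafstab}, $\II_\ell$ is $\sigma(\alpha_0,-\frac{1}{2})$-stable. Given an arbitrary $\sigma \in \KK$, by definition of $\KK$ there exists $\tilde{g} \in \tilde{\mathrm{GL}}^+_2(\R)$ with $\sigma = \sigma(\alpha_0,-\frac{1}{2}) \cdot \tilde{g}$. Writing $\tilde{g} = (g, M)$ and $\sigma(\alpha_0,-\frac{1}{2}) = (\PP, Z)$, we have $\sigma = (\PP', Z')$ with $\PP'(\phi) = \PP(g(\phi))$ and $Z' = M^{-1} \circ Z$. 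Since $\II_\ell$ is $\sigma(\alpha_0,-\frac{1}{2})$-stable, it lies in some $\PP(\phi_0)$ and is stable there; hence it lies in $\PP'(g^{-1}(\phi_0))$ and, because the subcategories $\PP'(\phi)$ are literally the subcategories $\PP(g(\phi))$, it remains stable. Therefore $\II_\ell$ is $\sigma$-stable for every $\sigma \in \KK$.

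There is essentially no obstacle here: the content is entirely in Proposition~\ref{lemma_idealsheafstab} (the wall-crossing analysis ruling out destabilizing subobjects) and in Proposition~\ref{prop_conncomp}, both of which are already proved. The only point requiring minor care is to spell out that $\tilde{\mathrm{GL}}^+_2(\R)$-equivalent stability conditions have identical semistable and stable objects, which is immediate from the definition of the action recalled in Section~2.1 (following \cite[Lemma 8.2]{Bri}): the action changes the central charge by an invertible linear map and relabels phases by an increasing bijection $g$ of $\R$, neither of which affects the subobject inequalities defining (semi)stability. I expect the proof in the paper to be a one- or two-line invocation of exactly this principle.
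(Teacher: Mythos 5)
Your proposal is correct and coincides with the paper's own argument: the paper's proof is exactly the two-line observation that the $\tilde{\mathrm{GL}}^+_2(\R)$-action preserves stability, so it suffices to check stability with respect to $\sigma(\alpha,-\frac{1}{2})$, which is Proposition \ref{lemma_idealsheafstab}. Your more detailed unwinding of the action on slicings is a faithful expansion of the same reasoning.
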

\begin{proof}
Note that it is enough to check the stability with respect to $\sigma(\alpha,-\frac{1}{2})$ for a certain $0< \alpha < \frac{1}{2}$, as $\tilde{\mathrm{GL}}^+_2(\R)$-action preserves the stability. This is provided by Proposition \ref{lemma_idealsheafstab}.
\end{proof}

\begin{rmk}
Similarly, Proposition \ref{prop_stabJ} imply that $\JJ_\ell$ is $\sigma$-stable for every $\sigma \in \KK$.
\end{rmk}

\begin{prop}\label{lem_numclassI}
Assume $d \neq 1$. If $F \in \Ku(X)$ is $\sigma$-stable for $\sigma \in \KK$ with $[F]=[\II_{\ell}]\in \mathcal{N}(\Ku(X))$, then $F\cong \II_{\ell'}[2k]$ for some line $\ell'\subset X$ and $k \in \Z$.
\end{prop}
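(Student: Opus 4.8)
The plan is to reduce to a single stability condition, show that a suitable shift of $F$ is a torsion-free sheaf, and then recover a line from its Chern character.

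Since the $\tilde{\mathrm{GL}}^+_2(\R)$-action preserves stability, I would first assume $\sigma=\sigma(\alpha_0,-\tfrac{1}{2})$ for a fixed $0<\alpha_0<\tfrac{1}{2}$; as stability is shift-invariant and $[\II_\ell]$ is preserved by even shifts, it suffices to prove that some shift of $F$ is the ideal sheaf of a line. A preliminary observation, obtained exactly as in the proof of Proposition~\ref{lemma_idealsheafstab}, is that there are no walls for the numerical class $\ch_{\leq 2}(\II_\ell)$ with respect to the weak stability conditions $\sigma_{\alpha,-\frac{1}{2}}$, $\alpha\in\R_{>0}$: the numerical relations forced by a putative wall, together with the Bogomolov--Gieseker inequality $\Delta_H\geq 0$, admit no integral solutions. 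Hence, if I can show that some shift $F[m]$ lies in $\Coh^{-\frac{1}{2}}(X)$ and is $\sigma_{\alpha_0,-\frac{1}{2}}$-semistable there, then $F[m]$ remains $\sigma_{\alpha,-\frac{1}{2}}$-semistable for every $\alpha>0$, in particular for $\alpha\gg 0$; by the description of $\sigma_{\alpha,-\frac{1}{2}}$-semistable objects as $\alpha\to\infty$ (cf.\ \cite[Lemma 2.7]{BMS}), and since $H^2\ch_1^{-\frac{1}{2}}(F[m])\geq 0$ forces $m$ to be even, this yields that $F[m]$ is a $\mu_H$-semistable sheaf of rank one, hence torsion-free.

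The step that requires real work is to pass from the $\sigma(\alpha_0,-\tfrac{1}{2})$-stability of $F$ in the heart $\A(\alpha_0,-\tfrac{1}{2})=\Coh^0_{\alpha_0,-\frac{1}{2}}(X)\cap\Ku(X)$ to the $\sigma_{\alpha_0,-\frac{1}{2}}$-semistability of a shift of $F$ in $\Coh^{-\frac{1}{2}}(X)$. A destabilizing sub- or quotient object of $F$ inside $\Coh^0_{\alpha_0,-\frac{1}{2}}(X)$ need not belong to $\Ku(X)$, so it is not automatically a subobject of $F$ in $\A(\alpha_0,-\tfrac{1}{2})$; to exclude it one has to analyze the cohomology objects of $F$ with respect to $\Coh^0_{\alpha_0,-\frac{1}{2}}(X)$ together with the way such a destabilizer maps to or from the exceptional pair $\OO_X,\OO_X(H)$, bounding the truncated Chern characters of the resulting pieces by means of the support property $\Delta_H\geq 0$. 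These inequalities turn out to be incompatible with $\ch_{\leq 2}(F)=\ch_{\leq 2}(\II_\ell)$ precisely when $d\neq 1$; for $d=1$ such destabilizers genuinely occur, which is why $\Sigma(X)$ is only an irreducible component of $M_\sigma(\Ku(X),[\II_\ell])$ in that case. I expect this to be the main obstacle.

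Finally, once a shift of $F$, which I again denote by $F$, is known to be a torsion-free rank-one sheaf with $\ch(F)=(1,0,-\tfrac{1}{d}H^2,0)$, we have $\ch_1(F)=0$, so the reflexive hull $F^{\vee\vee}$ is a line bundle with trivial first Chern class; since $\mathrm{Pic}(X)=\Z H$ this gives $F^{\vee\vee}\cong\OO_X$, hence $F\cong\II_Z$ for a closed subscheme $Z\subset X$ of codimension $\geq 2$. Then $\ch(\OO_Z)=\ch(\OO_X)-\ch(\II_Z)=(0,0,\tfrac{1}{d}H^2,0)$, so $Z$ has pure dimension one and degree $H\cdot\ch_2(\OO_Z)=1$ with $\ch_3(\OO_Z)=0$; therefore $Z=\ell'$ is a line and $F\cong\II_{\ell'}$. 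Undoing the shifts, $F\cong\II_{\ell'}[2k]$ for some $k\in\Z$, and conversely $\II_{\ell'}\in\Ku(X)$ is $\sigma$-stable by Proposition~\ref{lemma_idealsheafstabinK}. This completes the proof.
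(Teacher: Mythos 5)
Your overall skeleton (reduce to $\sigma(\alpha_0,-\frac{1}{2})$, show a shift of $F$ is a $\mu_H$-semistable torsion-free rank-one sheaf, then read off a line from the Chern character) matches the paper's, and your endgame is fine. But the proof has a genuine gap exactly where you say "I expect this to be the main obstacle": the passage from $\sigma(\alpha_0,-\frac{1}{2})$-stability of $F$ in $\AA(\alpha_0,-\frac{1}{2})=\Coh^0_{\alpha_0,-\frac{1}{2}}(X)\cap\Ku(X)$ to $\sigma^0_{\alpha_0,-\frac{1}{2}}$-semistability in the ambient heart is never carried out. You correctly observe that a destabilizer in $\Coh^0_{\alpha_0,-\frac{1}{2}}(X)$ need not lie in $\Ku(X)$, but the claim that analyzing its maps to and from $\OO_X,\OO_X(H)$ and invoking $\Delta_H\geq 0$ leads to inequalities "incompatible with $\ch_{\leq 2}(F)=\ch_{\leq 2}(\II_\ell)$ precisely when $d\neq 1$" is asserted, not proved, and it is not clear this direct attack at $\beta=-\frac{1}{2}$ works. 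A smaller gap: you take $\ch(F)=(1,0,-\frac{1}{d}H^2,0)$ for granted, whereas the hypothesis is only $[F]=[\II_\ell]$ in $\mathcal{N}(\Ku(X))$; deducing the Chern character requires the Hirzebruch--Riemann--Roch computation with $\chi(\OO_X,F)=\chi(\OO_X(1),F)=0$, $\chi(\II_\ell,F)=\chi(F,\II_\ell)=-1$, and for $d=2$ the additional input $\chi(F,F)=-1$.

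The paper circumvents the obstacle by a different mechanism. Using Proposition \ref{prop_conncomp}, $F$ is $\sigma(\alpha,\beta)$-stable for \emph{all} $(\alpha,\beta)\in V$, and one deforms to a point $(\bar\alpha,\bar\beta)$ on the locus $2-d\beta^2+d\alpha^2=0$, where the slope $\mu^0_{\alpha,\beta}$ of $G:=F[1]$ becomes $+\infty$: since $G$ then has the largest possible slope in $\Coh^0_{\bar\alpha,\bar\beta}(X)$, it is automatically $\sigma^0_{\bar\alpha,\bar\beta}$-semistable in the \emph{ambient} heart, with no need to control non-Kuznetsov destabilizers. One then wall-crosses back to $\beta=-\frac{1}{2}$; there is one potential wall (the semicircle through $\OO_X(-1)[2]$), and if it were actual one builds the reversed extension $G'$ and derives a contradiction from $G'\cong\II_\ell[1]\in\Ku(X)$ together with $\Hom(G',\OO_X(-1)[2])\neq 0$. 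Finally, at $\beta=-\frac{1}{2}$ the triangle $A[1]\to G\to B$ with $B$ supported on points is killed not by a $\Delta_H$ computation but by Li's theorem, i.e.\ \cite[Conjecture 4.1]{BMS} at $\alpha=0$, which gives $\ch_3(A)=m<1$; this is precisely where $d\neq 1$ enters. Your proposal does not contain a substitute for either the infinite-slope trick or the $\ch_3$ bound, so as written it does not constitute a proof.
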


\begin{proof}
Set $\ch(F):=(a_0, a_1H, a_2H^2, a_3H^3)$. 
As $[F]=[\II_{\ell}]\in \mathcal{N}(\Ku(X))$ and $\chi(\II_{\ell},\II_{\ell})=-1$, the following conditions hold:
$$
\chi(\mathcal{O}_{X}, F)=0,\; \chi(\mathcal{O}_{X}(1), F)=0,\; \chi(\II_{\ell}, F)=-1, \chi(F, \II_{\ell})=-1.
$$
Recall that
$$
\td(X)=(1, H, (\frac{1}{3}+\frac{1}{d})H^{2}, \frac{1}{d}H^{3})
$$
(cf.\ case 3 of \cite[Lemma 1.2]{Li}). By Hirzebruch-Riemann-Roch Theorem, we get 
$$
\begin{cases}
a_0+ \frac{d+3}{3}a_1+da_2+da_3=0  \\
\frac{6-d}{6}a_1+da_3=0  \\
\frac{d}{3}a_1+da_2+da_3=-1  \\
-\frac{d}{3}a_1+da_2-da_3=-1.
\end{cases}
$$
If $d \neq 2$, then $a_0=1, a_1=0, a_2=-\frac{1}{d}, a_3=0$. If $d=2$, the condition $\chi(F, F)=-1$ implies $\ch(F)=\ch(\II_\ell)$.

Now by assumption and Proposition \ref{prop_conncomp}, $F$ is $\sigma(\alpha,\beta)$-stable for every $(\alpha,\beta) \in V$. In particular, $F[2k+1] \in \AA(\alpha, \beta)$ for some integer $k$. Up to shifting, we may assume $G:=F[1] \in \AA(\alpha, \beta)$ is $\sigma(\alpha,\beta)$-stable with slope $$\mu_{\alpha,\beta}^0(G)=\frac{-\beta}{\frac{2-d\beta^2}{2d}+\frac{1}{2}\alpha^2}.$$
In particular, we have $\mu^0_{\alpha,\beta}(G)=+\infty$ if
\begin{equation}
\label{eq_slopeinfinity}    
2-d\beta^2+d\alpha^2=0.
\end{equation}

We distinguish the cases $d \geq 3$ and $d=2$. If $d \geq 3$ we can find pairs $(\bar{\alpha},\bar{\beta}) \in V$ such that \eqref{eq_slopeinfinity} holds (in this case $\bar{\beta}$ satisfies $-\frac{d+2}{2d}< \bar{\beta}< -\sqrt{\frac{2}{d}}$). Then $G$ is $\sigma^0_{\bar{\alpha},\bar{\beta}}$-semistable, since $G$ has the largest slope in the heart. A similar computation as in Proposition \ref{lemma_idealsheafstab} shows that $\beta=-1$ is not on a wall for the $\sigma_{\alpha,\beta}^0$-stability of $G$. Moreover, the semicircle $\CC$ in the $(\alpha,\beta)$-plane of center $(0,-\frac{d+2}{2d})$ and ray $\frac{d-2}{2d}$ gives a numerical wall for $G$, which could be realized for instance by the object $\OO_X(-1)[2] \in \Coh^0_{\alpha,\beta}(X)$. 

Assume that $\CC$ is not an actual wall for $G$. All the other numerical walls would be nested semicircles in $\CC$.
Thus we may choose $(\bar{\alpha},\bar{\beta})=(\frac{d-2}{2d},-\frac{d+2}{2d})$, so that $G$ is $\sigma^0_{\bar{\alpha},\bar{\beta}}$-semistable and $G$ remains semistable for $\bar{\beta}$ approaching $-\frac{1}{2}$ (see Figure \ref{fig0}).
\begin{figure}[htb]
\centering
\begin{tikzpicture}[domain=-8:1]
\draw[->] (-8,0) -- (1,0) node[right] {$\beta$};
\draw[->] (0,-2) -- (0, 5.5) node[above] {$\alpha$};
\coordinate (q1) at (0,0);

\coordinate (q2) at (0,4);
\node  at (q2) {$-$};
\draw (0, 4) node [right]  {$\frac{1}{2}$};
\coordinate (s1) at (-4,0);
\node  at (s1) {$|$};
\draw (-4,0) node [below]  {$-\frac{1}{2}$};
\coordinate (s2) at (-8,0);
\node  at (s2) {$\mid$};
\draw (-8,0) node [below]  {$-1$};

\coordinate (OH) at (-4,4);

\coordinate (A) at (-6.66,0);
\node  at (A) {$\bigcdot$};
\draw (-6.66,0) node [below]{$-\frac{d+2}{2d}$};

\draw[dotted] (s2)--(OH)  (0.05, -0.005);
\draw[dotted] (q1)--(OH)  (0.05, -0.005);

\draw[dashed] (-5.33,0) arc (0:180:1.33cm);
\draw (-5.66,0) arc (0:180:1cm);

\coordinate (P) at (-6.66, 1.33);
\node  at (P) {$\bigcdot$};
\draw (-6.66,1.33) node [above right]{$(\bar{\alpha},\bar{\beta})$};

\end{tikzpicture}
\caption{The point $(\bar{\alpha},\bar{\beta})$ lies above the first wall for $G$. \label{fig0}}
\end{figure}
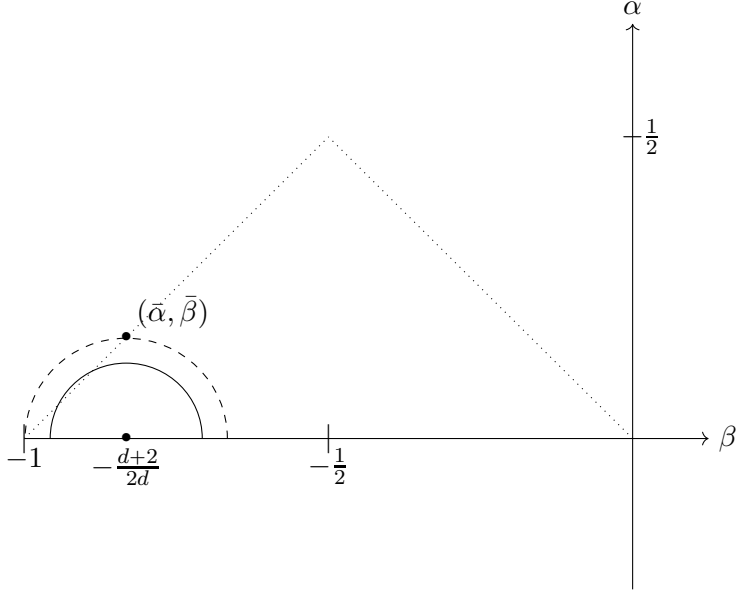

Now we argue as in \cite[Lemma 2.15]{PeRo}. By definition of $\Coh^0_{\alpha,-\frac{1}{2}}(X)$, we have a triangle
$$A[1] \to G \to B$$
such that $A$ (resp.\ $B$) is in $\Coh^{-\frac{1}{2}}(X)$ with $\sigma_{\alpha,-\frac{1}{2}}$-semistable factors having slope $\mu_{\alpha,-\frac{1}{2}} \leq 0$ (resp.\ $> 0$). Since $G$ is $\sigma^0_{\alpha,-\frac{1}{2}}$-semistable, we have that $B$ is either supported on points or $0$. Moreover, $A[1]$ is $\sigma^0_{\alpha,-\frac{1}{2}}$-semistable and, since $A \in \Coh^{-\frac{1}{2}}(X)$, we have that $A$ is $\sigma_{\alpha,-\frac{1}{2}}$-semistable. Hence $\ch(A)=(1, 0, -\frac{1}{d}H^2, m)$, where $m \geq 0$ is the length of the support of $B$. The walls computation in Proposition \ref{lemma_idealsheafstab} shows that $A$ is $\sigma_{\alpha,-\frac{1}{2}}$-semistable for every $\alpha>0$. Hence by \cite{Li}, \cite[Conjecture 4.1]{BMS} holds for $\alpha=0, \beta=-\frac{1}{2}$, so that
$$4 ( \frac{d-8}{8} )^2-6 \frac{d}{2}\ch_3^{-\frac{1}{2}}(A) \geq 0.$$
Performing the computation, we get that $\ch_3(A)=m <1$ for $d \neq 1$. We deduce that $m=0$, so $B=0$. We conclude that $G=A[1]$, equivalently $F=A$ is $\sigma_{\alpha,-\frac{1}{2}}$-semistable for every $\alpha>0$. By \cite[Lemma 2.7]{BMS},  it follows that $F$ is a slope semistable torsion free sheaf.  As $\ch(F)=\ch(\II_{\ell})$ and $\mathrm{Pic}(X)\cong \Z$, we must have $F\cong \II_{\ell'}$ for some line $\ell'\in X$. 

Assume instead that $\CC$ defines an actual wall for $G$ and that $G$ becomes unstable for $\beta \to -\frac{1}{2}$. Set $(\bar{\alpha},\bar{\beta})=(\frac{d-2}{2d},-\frac{d+2}{2d})$. Then $G$ is strictly $\sigma^0_{\bar{\alpha},\bar{\beta}}$-semistable and there is a sequence
$$0 \to P \to G \to Q \to 0$$
in $\Coh^0_{\bar{\alpha},\bar{\beta}}(X)$ where $P, Q$ are $\mu^0_{\bar{\alpha},\bar{\beta}}$-semistable with the same slope $+\infty$. A computation shows that $\ch_{\leq 2}(P)=(1,-H, \frac{1}{2}H^2)$, $\ch_{\leq 2}(Q)=(-2,H, \frac{2-d}{2d}H^2)$ and $d \neq 5$. 

Note that $P \cong \OO_X(-1)[2]$. Indeed, we have $\Hom(\OO_X(-1)[2], P[i])=0$ for every $i \neq 0,1$, by Serre duality and the fact that they are in $\Coh^0_{\bar{\alpha},\bar{\beta}}(X)$. Denote by $\HH^i(P)$ the degree-$i$ cohomology of $P$ in $\Coh^{\bar{\beta}}(X)$. Then $\HH^0(P)$ is either $0$ or supported on points. Then 
$$\Hom(\OO_X(-1)[2], P[1])=\Hom(\OO_X(-1)[1],P)=\Hom(\OO_X(-1),\HH^{-1}(P))=0,$$
since $\OO_X(-1)$ is stable with the same character of $\HH^{-1}(P)$ till degree $2$. As $\chi(\OO_X(-1),P) \neq 0$, we deduce that $\Hom(\OO_X(-2)[2], P) \neq 0$. The stability of $\OO_X(-2)[2]$ implies the claim. 

As a consequence, $\chi(P,Q) \neq 0$ and $\Hom(P,Q[i])=0$ for $i \neq 1$, since $P$ and $Q$ are in the same heart and by Serre duality. So we can define $G'$ as the extension
$$0 \to Q \to G' \to P \to 0$$
in $\Coh^0_{\bar{\alpha},\bar{\beta}}(X)$. Now $G'$ is $\sigma^0_{\alpha,\beta}$-semistable for $\beta \to -\frac{1}{2}$. But then the computation done in the previous case for $\beta=-\frac{1}{2}$ shows that $G' \cong \II_\ell[1]$. However, this would imply $G' \in \Ku(X)$, so that $\Hom(G',P)=0$ giving a contradiction. This proves the statement for $d >2$.

If $d=2$, note that \eqref{eq_slopeinfinity}  holds for $(\bar{\alpha},\bar{\beta})=(0,-1)$. Since there are no walls intersecting the vertical line $\beta=-1$, we deduce that $G$ is $\sigma_{\alpha,\beta}^0$-semistable for $\beta=-1$ and $\alpha>0$. As explained before, $G$ sits in a sequence
$$A[1] \to G \to B$$
where $B$ is either $0$ or supported on points and $A$ is $\sigma_{\alpha,-1}$-semistable in $\Coh^{-1}(X)$. Applying \cite[Conjecture 4.1]{BMS} for $\alpha=0$, we deduce $G=A[1]$. The argument applied in the previous case proves the statement for $d=2$.
\end{proof}

\begin{proof}[Proof of Theorem \ref{Fanolines_modspace}]
Assume $d \neq 1$. By Proposition \ref{lemma_idealsheafstabinK} and Proposition \ref{lem_numclassI} there is a bijection between $\Sigma(X)$ and $M_\sigma(\Ku(X),[\II_\ell])$. Moreover, the universal ideal sheaf on $X \times \Sigma(X)$ is a universal family of stable objects on $X \times M_\sigma(\Ku(X),[\II_\ell])$. Thus the bijection induces an isomorphism, arguing as in \cite[Section 5.2]{BMMS}. The case $d=1$ follows from Proposition \ref{lemma_idealsheafstabinK} by a similar argument as above. 
\end{proof}

\begin{rmk}
By \cite[Section 3.6]{Kuz2} the complex $\JJ_\ell$ is the (derived) dual of $\II_\ell(1)$ shifted by $1$. In fact $\II_\ell$ and $\JJ_\ell$ are exchanged by the autoequivalence 
$$\Phi(-):=\mathsf{R}\lHom(-,\OO_X(-1))[1]: \Ku(X) \to \Ku(X).$$
As a consequence, we deduce that $\Sigma(X) \cong M_\sigma(\Ku(X),[\II_\ell]) \cong M_{\Phi \cdot \sigma}(\Ku(X),[\JJ_\ell])$ for $d \neq 1$.
\end{rmk}

\begin{rmk}
Note that there exist a non-trivial map $\II_\ell \to \JJ_{\ell'}$ if $\ell$ and $\ell'$ intersect in a point and a non-zero morphism $\JJ_\ell \to \II_{\ell'}[1]$ for disjoint $\ell$ and $\ell'$. As consequence, if $\II_\ell$ and $\JJ_\ell$ are $\sigma$-stable of phases $\phi_\II$ and $\phi_\JJ$, respectively, for a stability condition $\sigma$, then
\begin{equation}
\label{eq_phases}   
\phi_\JJ-1 < \phi_\II < \phi_\JJ.
\end{equation}
\end{rmk}

\begin{rmk}
Assume $X$ is a cubic threefold ($d=3$) and let $\bar{\sigma}$ be the stability condition on $\Ku(X)$ constructed in \cite{BMMS}. By \cite[Theorem 4.1]{BMMS} the ideal sheaves of lines are $\bar{\sigma}$-stable. Moreover, since $\Hom(\JJ_\ell,\JJ_\ell[i])= \Hom(\II_\ell,\II_\ell[i])$, by \cite[Proposition 4.2]{BMMS}, it follows that $\JJ_\ell$ is $\bar{\sigma}$-stable. An interesting question would be to understand whether $\bar{\sigma}$ belongs to $\KK$.
\end{rmk}

\section{Cubic threefolds}
Assume that $X$ is a cubic threefold. In this section we show that the Serre functor of $\Ku(X)$ preserves the orbit $\KK$. As a consequence, we deduce Theorem \ref{cor_smoothmod} and we give an alternative proof of the categorical Torelli Theorem proved in \cite{BMMS}.

\subsection{Serre functor and stability}
Let $X$ be a cubic threefold. Recall that $\Ku(X)$ is a Calabi-Yau category of dimension $5/3$, which means that $S_{\Ku(X)}^3 \cong [5]$ by \cite[Lemmas 4.1 and 4.2]{Kuz04}. Moreover, if we denote by $\Phi: \D(X) \to \D(X)$ the autoequivalence $\Phi(-)=(-) \otimes \OO_X(H)$, by \cite[Lemma 4.1]{Kuz04} the Serre functor of $\Ku(X)$ satisfies the relation
\begin{equation}
\label{eq_Serrefunctor}
S_{\Ku(X)}^{-1}=(\L_{\OO_X} \circ \Phi) \circ (\L_{\OO_X} \circ \Phi)[-3].
\end{equation}
Here $\L_{\OO_X}: \D(X) \to \D(X)$ is the left mutation functor with respect to $\OO_X$. Let us firstly study the action of the autoequivalence $\L_{\OO_X} \circ \Phi$ of $\Ku(X)$ on a stability condition $\sigma(\alpha,-\frac{1}{2})$. We need the following lemmas.

\begin{lemma}
\label{lemma_firsttilt}
The heart $\Phi(\emph{Coh}^{-\frac{1}{2}}(X))$ of $\emph{D}^b(X)$ is a tilt of $\emph{Coh}^{-\frac{1}{2}}(X)$.
\end{lemma}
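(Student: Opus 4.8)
The plan is to reduce the statement to a slope computation of the same type as in Lemma \ref{lemma_tiltCoh}. First I would record how $\Phi(-)=(-)\otimes\OO_X(H)$ interacts with the relevant structures. It is an exact autoequivalence taking coherent sheaves to coherent sheaves (with inverse $(-)\otimes\OO_X(-H)$ doing the same), so it restricts to an autoequivalence of the standard heart $\Coh(X)$. Moreover $\ch(\Phi(E))=e^{H}\ch(E)$, so $\ch_1$ shifts by $H\ch_0$ and hence $\mu_H(\Phi(E))=\mu_H(E)+1$ for every sheaf $E$ of positive rank, while torsion sheaves go to torsion sheaves; thus $\Phi$ sends a $\mu_H$-semistable sheaf of slope $\mu$ to a $\mu_H$-semistable sheaf of slope $\mu+1$. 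In particular $\Phi$ carries the torsion pair on $\Coh(X)$ defining $\Coh^{-\frac12}(X)$ to the torsion pair defining $\Coh^{\frac12}(X)$, so that $\Phi(\Coh^{-\frac12}(X))=\Coh^{\frac12}(X)$.

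Next, by \cite[Exercise 6.5]{MS} it suffices to verify the inclusion
$$\Coh^{\frac12}(X)\subset\langle\Coh^{-\frac12}(X),\Coh^{-\frac12}(X)[1]\rangle.$$
Since the right-hand side is extension-closed and every object of $\Coh^{\frac12}(X)$ is a finite extension of objects of the form $G$ or $G[1]$ with $G\in\Coh(X)$ a $\mu_H$-semistable sheaf, it is enough to place such $G$, $G[1]$ in the right-hand side, by a case analysis on $\mu_H(G)$. If $\mu_H(G)>\frac12$, then in particular $\mu_H(G)>-\frac12$, so $G\in\Coh^{-\frac12}(X)$. If $\mu_H(G)\le\frac12$, then $G[1]\in\Coh^{\frac12}(X)$: when $-\frac12<\mu_H(G)\le\frac12$ we have $G\in\Coh^{-\frac12}(X)$, hence $G[1]\in\Coh^{-\frac12}(X)[1]$; when $\mu_H(G)\le-\frac12$ we have $G[1]\in\Coh^{-\frac12}(X)$. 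This gives the inclusion, and since $\Coh^{\frac12}(X)$ and $\Coh^{-\frac12}(X)$ are both hearts of bounded $t$-structures, \cite[Exercise 6.5]{MS} yields that $\Coh^{\frac12}(X)=\Phi(\Coh^{-\frac12}(X))$ is a tilt of $\Coh^{-\frac12}(X)$.

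There is no real obstacle here; the only points needing care are that $\Phi$ preserves the standard heart and shifts $\mu_H$ by exactly $1$ (so that twisting by $\OO_X(H)$ turns $\Coh^{-1/2}$ into $\Coh^{1/2}$), and the reduction to $\mu_H$-semistable sheaves via Harder--Narasimhan filtrations together with extension-closedness — both standard. It is worth emphasizing that we genuinely obtain only a one-sided inclusion into a width-two subcategory, which is exactly what "being a tilt" requires.
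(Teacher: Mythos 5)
Your proof is correct and follows essentially the same route as the paper's: both arguments reduce to $\mu_H$-semistable sheaves via HN filtrations and extension-closure, use that twisting by $\OO_X(H)$ shifts $\mu_H$ by exactly $1$, and run the identical three-way case analysis on the slope to land the images in $\langle \Coh^{-\frac12}(X),\Coh^{-\frac12}(X)[1]\rangle$. Your intermediate identification $\Phi(\Coh^{-\frac12}(X))=\Coh^{\frac12}(X)$ is a harmless repackaging of the same computation, and the appeal to \cite[Exercise 6.5]{MS} matches the paper's justification in Lemma \ref{lemma_tiltCoh}.
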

\begin{proof}
Let $F \in \Coh(X)$ be a slope semistable sheaf with $\mu_H(F) > -\frac{1}{2}$. Then $\Phi(F)$ is a slope semistable sheaf with $\mu_H(\Phi(F))> \frac{1}{2}> -\frac{1}{2}$. Thus $\Phi(F) \in \Coh^{-\frac{1}{2}}(X)$. If $F \in \Coh(X)$ is slope semistable with $\mu_H(F) \leq -\frac{1}{2}$, then we can have either $-\frac{1}{2} < \mu_H(\Phi(F)) \leq \frac{1}{2}$, or $\mu_H(\Phi(F)) \leq - \frac{1}{2}$. In the first case, $\Phi(F)[1] \in \Coh^{-\frac{1}{2}}(X)[1]$, in the second we have $\Phi(F)[1] \in \Coh^{-\frac{1}{2}}(X)$. We deduce $\Phi(\Coh^{-\frac{1}{2}}(X)) \subset \langle \Coh^{-\frac{1}{2}}(X), \Coh^{-\frac{1}{2}}(X)[1] \rangle$, as we wanted.
\end{proof}

\begin{lemma}
\label{lemma_secondtilt}
Assume that $E \in \emph{Coh}^0_{\alpha,-\frac{1}{2}}(X)$ for every $0 < \alpha < \frac{1}{2}$. Then there exists $\varepsilon>0$ such that for $\alpha':=\frac{1}{2}-\varepsilon>0$ we have
$$\Phi(E) \in \langle \emph{Coh}_{\alpha',-\frac{1}{2}}^0(X), \emph{Coh}_{\alpha',-\frac{1}{2}}^0(X)[1], \OO_X[2] \rangle.$$
\end{lemma}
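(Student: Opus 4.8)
The plan is to transport the heart $\Coh^0_{\alpha,-\frac12}(X)$ through $\Phi$ and then compare the resulting heart with $\Coh^0_{\alpha',-\frac12}(X)$. Since $\Phi=(-)\otimes\OO_X(H)$ acts on Chern characters by $\ch(\Phi(F))=\ch(F)e^{H}$, one computes $\ch^{\beta}(\Phi(F))=\ch^{\beta-1}(F)$, hence $Z_{\alpha,\beta}(\Phi(F))=Z_{\alpha,\beta-1}(F)$ and $\mu_{\alpha,\beta}(\Phi(F))=\mu_{\alpha,\beta-1}(F)$; since moreover tensoring by the ample line bundle $\OO_X(H)$ shifts slope stability by $1$, we have $\Phi(\Coh^{\beta-1}(X))=\Coh^{\beta}(X)$. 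Thus $\Phi$ commutes with the double-tilting construction and $\Phi(\Coh^0_{\alpha,-\frac12}(X))=\Coh^0_{\alpha,\frac12}(X)$. Applying the hypothesis to the value $\alpha=\alpha':=\frac12-\varepsilon\in(0,\frac12)$ (with $\varepsilon>0$ to be chosen) gives $\Phi(E)\in\Coh^0_{\alpha',\frac12}(X)$, so the remaining task is a comparison of hearts: to show that, for $\varepsilon$ small, every object of $\Coh^0_{\alpha',\frac12}(X)$ arising this way lies in $\langle\Coh^0_{\alpha',-\frac12}(X),\Coh^0_{\alpha',-\frac12}(X)[1],\OO_X[2]\rangle$.

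By definition of the tilt, $\Phi(E)$ fits into a triangle $\tilde A[1]\to\Phi(E)\to\tilde B$ with $\tilde A,\tilde B\in\Coh^{\frac12}(X)$, where all $\sigma_{\alpha',\frac12}$-Harder--Narasimhan factors of $\tilde A$ have slope $\leq 0$ and all those of $\tilde B$ have slope $>0$. As the target category is closed under extensions, it suffices to show $\tilde B$ and $\tilde A[1]$ lie in it; for this I would split $\tilde A$ and $\tilde B$ into their $\sigma_{\alpha',\frac12}$-semistable factors and treat each factor $F$ separately, tracking it through the two tilts that separate $\Coh^0_{\alpha',\frac12}(X)$ from $\Coh^0_{\alpha',-\frac12}(X)$. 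First, the passage from $\Coh^{\frac12}(X)$ to $\Coh^{-\frac12}(X)$ moves $F$ by at most one shift, and does so only when a slope-semistable subquotient sheaf of $F$ has $\mu_H\in(-\frac12,\frac12]$, by (the proof of) Lemma \ref{lemma_firsttilt}; then the second tilt at slope $0$ with respect to $\sigma_{\alpha',-\frac12}$ puts the outcome either into $\Coh^0_{\alpha',-\frac12}(X)$ or into $\Coh^0_{\alpha',-\frac12}(X)[1]$, according to the sign of its $\sigma_{\alpha',-\frac12}$-slope. The substance of this step is a finite wall-crossing analysis: along the segment where $\alpha=\alpha'$ and $\beta$ runs from $-\frac12$ to $\frac12$, only finitely many numerical walls are relevant for the (bounded) Chern characters occurring in $\Phi(E)$, and $\varepsilon$ is chosen small enough that $\sigma_{\alpha',\frac12}$-semistability of the factors $F$ is governed by the chamber adjacent to the parabola; carrying out the bookkeeping shows that every $F$ behaves as above, with a single class of exceptions.

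To identify the exceptional class I would use the full hypothesis, that $E\in\Coh^0_{\alpha,-\frac12}(X)$ for \emph{every} $\alpha\in(0,\frac12)$. This forces the Chern characters of the $\sigma_{\alpha',\frac12}$-semistable factors of the degree-$(-1)$ part $\tilde A$ to be extremal, and combining the resulting inequalities on $\mu_{\alpha,-\frac12}$ for all $\alpha\in(0,\frac12)$ with the Bogomolov--Gieseker inequality $\Delta_H\geq 0$ --- together with, in the limit $\alpha=0$, the stronger inequality of \cite[Conjecture 4.1]{BMS}, which holds here by \cite{Li}, exactly as in the proofs of Propositions \ref{lemma_idealsheafstab} and \ref{lem_numclassI} --- leaves room only for shifts of $\OO_X$, the unique such object saturating $\Delta_H=0$ and whose slope $\mu_H=0$ lies precisely on the boundary between $\Coh^{-\frac12}(X)$ and $\Coh^{\frac12}(X)$. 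Such a factor contributes $\OO_X[1]$ to $\tilde A$, hence $\OO_X[2]$ to $\tilde A[1]$, which is exactly the third generator in the statement, while $\OO_X$ itself lies in $\Coh^0_{\alpha',-\frac12}(X)$ as checked in the proof of Theorem \ref{thm_U}. Reassembling the factors via extension-closedness then yields the claim.

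The hard part will be Steps two and three: controlling, uniformly for $\alpha'$ near $\frac12$, the finitely many walls between $\beta=\frac12$ and $\beta=-\frac12$, and then using the ``for all $\alpha$'' hypothesis and the Bogomolov--Gieseker machinery to prove that the only semistable factors failing to lie in $\langle\Coh^0_{\alpha',-\frac12}(X),\Coh^0_{\alpha',-\frac12}(X)[1]\rangle$ are direct sums of $\OO_X$. This is precisely the point at which one needs, following \cite[Lemma 3]{LiZhao2} (cf.\ \cite[Lemma 3.1]{FLLQ}), precise control on the phases of the semistable factors as the weak stability condition is deformed.
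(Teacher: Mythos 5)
Your opening reduction is correct and is a clean way to phrase the problem: since $\ch^{\beta}(\Phi(F))=\ch^{\beta-1}(F)$ and $\Phi$ shifts $\mu_H$ by $1$, indeed $\Phi(\Coh^0_{\alpha,-\frac12}(X))=\Coh^0_{\alpha,\frac12}(X)$, and the lemma becomes a comparison of $\Coh^0_{\alpha',\frac12}(X)$ with the extension closure of $\Coh^0_{\alpha',-\frac12}(X)$, its shift, and $\OO_X[2]$. You have also correctly identified the two essential ingredients: the phase control of \cite[Lemma 3]{LiZhao2} under deformation of the weak stability condition, and the fact that the third generator $\OO_X[2]$ is forced by factors numerically proportional to $\ch(\OO_X)$. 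However, what you submit is a plan rather than a proof: the two steps you yourself label ``the hard part'' --- the uniform control of the $\sigma_{\alpha',-\frac12}$-phases of the semistable factors for $\alpha'$ near $\frac12$, and the classification of the exceptional factors --- are precisely the entire technical content of the lemma, and neither is carried out. In addition, the intermediate description is too optimistic: a $\sigma_{\alpha',\frac12}$-semistable factor $F$ is in general not $\sigma_{\alpha',-\frac12}$-semistable, so the statement that the second tilt places ``the outcome'' into $\Coh^0_{\alpha',-\frac12}(X)$ or its shift ``according to the sign of its slope'' does not make sense until the phases of all its $\sigma_{\alpha',-\frac12}$-HN factors have been pinned down; this is exactly where the segment construction $\Phi(B)^{\pm}$, $\Phi(A[1])^{\pm}$ and the choice of $\varepsilon$ making $\Phi(A[1])^+$ fall to the left of the line $l_P$ are needed, and you only gesture at this at the very end.

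The paper also handles the exceptional term in the opposite order, and this difference matters. Rather than applying $\Phi$ first and then trying to recognize the offending factors a posteriori by a numerical argument, the paper first splits off $\OO_X(-H)^{\oplus k}[2]$ with $k=\dim\Hom(\OO_X(-H)[2],E)$ via the triangle $\OO_X(-H)^{\oplus k}[2]\to E\to E_1$; since $\mu^0_{\alpha,-\frac12}(\OO_X(-H)[2])\to+\infty$ as $\alpha\to\frac12$, this is the top HN piece, $E_1$ stays in the heart, and $\Phi(\OO_X(-H)[2])=\OO_X[2]$ produces the third generator for free. The remaining task is then only to show $\Phi(E_1)\in\langle\Coh^0_{\alpha',-\frac12}(X),\Coh^0_{\alpha',-\frac12}(X)[1]\rangle$, which works because no semistable factor of $E_1$ sits at the exceptional point $(-1,\frac12)$ of the $(s,q)$-plane. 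Your alternative --- arguing that the slope inequalities for all $\alpha\in(0,\frac12)$ together with $\Delta_H\geq 0$ and \cite[Conjecture 4.1]{BMS} ``leave room only for shifts of $\OO_X$'' --- is not obviously adequate as stated: you would need to show that the offending factors are genuinely isomorphic to $\OO_X[1]^{\oplus k}$ (not merely numerically proportional to $\ch(\OO_X)$, which requires a $\Delta_H=0$ rigidity argument of the type used for $P\cong\OO_X(-1)[2]$ in Proposition \ref{lem_numclassI}), and you would further need these factors to occupy an extremal position in the HN filtration so that they can be extracted compatibly with the extension closure. None of this is supplied, so the proposal as written has a genuine gap.
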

\begin{proof}
Set $k:=\text{dim}\Hom(\OO_X(-H)[2],E)$ and consider the exact triangle
\begin{equation}
\label{eq_extwithO-H}
\OO_X(-H)^{\oplus k}[2] \to E \to E_1
\end{equation}
with $E_1$ satisfying $\Hom(\OO_X(-H)[2],E_1)=0$. Denote by $ \Coh^{-\frac{1}{2}}(X)_{\mu_{\alpha, -\frac{1}{2}}  > 0}$ (resp.\ $ \Coh^{-\frac{1}{2}}(X)_{\mu_{\alpha, -\frac{1}{2}}  \leq  0}$) the subcategory of $\Coh^{-\frac{1}{2}}(X)$ generated by $\mu_{\alpha,-\frac{1}{2}}$-semistable objects with slope $\mu_{\alpha,-\frac{1}{2}} >0$ (resp.\ $\leq 0$). Let us summarize the argument of the proof. Firstly we note that up to choosing a $\alpha$ close to $\frac{1}{2}$, we may assume that $\OO_X(-H)^{\oplus k}[2]$ is the $\sigma_{\alpha,-\frac{1}{2}}$-semistable factor of $E$ with bigger slope. In particular, $E_1$ belongs to $\Coh^0_{\alpha,-\frac{1}{2}}(X)$. Thus there exist $A \in \Coh^{-\frac{1}{2}}(X)_{\mu_{\alpha, -\frac{1}{2}}  \leq  0}$, $B \in  \Coh^{-\frac{1}{2}}(X)_{\mu_{\alpha, -\frac{1}{2}}  > 0}$ and an extension
$$A[1] \to E_1 \to B$$
in $\Coh^0_{\alpha,-\frac{1}{2}}(X)$. We will show that, choosing a suitable $\alpha'=\frac{1}{2}-\varepsilon >0$ with $\varepsilon>0$, we have
\begin{equation}
\label{eq_phi(B)}
\Phi(B) \in \langle \Coh^{-\frac{1}{2}}(X)_{\mu_{\alpha', -\frac{1}{2}} > 0}, \Coh^{-\frac{1}{2}}(X)[1] \rangle
\end{equation}
and
\begin{equation}
\label{eq_phi(A)}
\Phi(A)[1] \in \langle \Coh^{-\frac{1}{2}}(X)[1], \Coh^{-\frac{1}{2}}(X)_{\mu_{\alpha', -\frac{1}{2}} \leq 0}[2] \rangle.
\end{equation}
Together with \eqref{eq_extwithO-H}, this implies the statement as
$$\langle \Coh^{-\frac{1}{2}}(X)_{\mu_{\alpha', -\frac{1}{2}} > 0}, \Coh^{-\frac{1}{2}}(X)[1] \rangle \subset \langle \Coh^0_{\alpha',-\frac{1}{2}}(X), \Coh^0_{\alpha',-\frac{1}{2}}(X)[1] \rangle$$
and
$$\langle \Coh^{-\frac{1}{2}}(X)[1], \Coh^{-\frac{1}{2}}(X)_{\mu_{\alpha', -\frac{1}{2}} \leq 0}[2] \rangle \subset \langle \Coh^0_{\alpha',-\frac{1}{2}}(X), \Coh^0_{\alpha',-\frac{1}{2}}(X)[1]  \rangle.$$
We point out that $E$ belongs to the heart after changing $\alpha$ by hypothesis.

In order to do the computation, it is convenient to use the setup introduced in \cite[Section 1]{LiZhao}, recalled in Remark \ref{rmk_planeofstability}. Then we will prove the statement using the weak stability condition $\sigma_{q,s}'$ with reparamentrized central charge, and the condition $\alpha'=\frac{1}{2} - \varepsilon$ is equivalent to $q=\frac{1}{4}-\varepsilon$.

Firstly, we claim that, up to changing $q$, we can assume that $\OO_X(-H)^{\oplus k}[2]$ is the $(\sigma_{q,-\frac{1}{2}}^{\prime})^{ -\frac{1}{2}}$-semistable factor of $E$ with bigger slope. Indeed, note that $$(\mu_{\frac{1}{4},-\frac{1}{2}}')^{-\frac{1}{2}}(\OO_X(-H)[2])= + \infty;$$ 
thus the slope $(\mu_{q,-\frac{1}{2}}')^{-\frac{1}{2}}(\OO_X(-H)[2])$ converges to $+\infty$ for $q \to \frac{1}{4}$. Now assume that $E$ has a $(\sigma_{q,-\frac{1}{2}}')^{-\frac{1}{2}}$-semistable factor $A_i$ with $(\mu_{q,-\frac{1}{2}}')^{-\frac{1}{2}}(A_i) \geq (\mu_{q,-\frac{1}{2}}')^{-\frac{1}{2}}(\OO_X(-H)[2])$. Then we can choose $\varepsilon>0$ such that the slope $(\mu_{\frac{1}{4}-\varepsilon,-\frac{1}{2}}')^{-\frac{1}{2}}(A_i) < (\mu_{\frac{1}{4}-\varepsilon,-\frac{1}{2}}')^{-\frac{1}{2}}(\OO_X(-H)[2])$ as required (see Figure \ref{fig_1}).

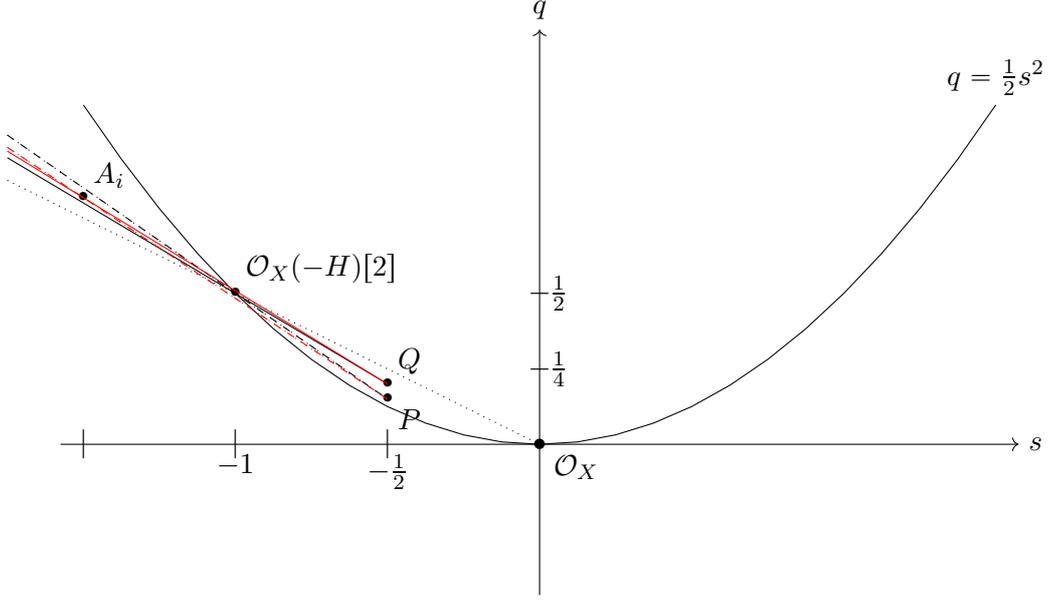
\begin{figure}[htb]
\centering
\begin{tikzpicture}[domain=-6:6]
\draw[->] (-6.3,0) -- (6.3,0) node[right] {$s$};
\draw[->] (0,-2) -- (0, 5.5) node[above] {$q$};
\draw plot (\x,{0.125*\x*\x}) node [above] {$q = \frac{1}{2}s^{2}$};
\coordinate (O) at (0,0);
\node  at (O) {$\bullet$};

\coordinate (q1) at (0,1);
\node  at (q1) {$-$};
\draw (0, 1) node [right]  {$\frac{1}{4}$};
\coordinate (q2) at (0,2);
\node  at (q2) {$-$};
\draw (0, 2) node [right]  {$\frac{1}{2}$};
\coordinate (s1) at (-2,0);
\node  at (s1) {$|$};
\draw (-2,0) node [below]  {$-\frac{1}{2}$};
\coordinate (s2) at (-4,0);
\node  at (s2) {$\mid$};
\draw (-4,0) node [below]  {$-1$};
\coordinate (s3) at (-6,0);
\node  at (s3) {$\mid$};

\coordinate (OH) at (-4,2);
\node  at (OH) {$\bigcdot$};
\draw (-4,2) node [above right]  {$\mathcal{O}_{X}(-H)[2]$};

\coordinate (A) at (-6,3.27);
\node  at (A) {$\bigcdot$};
\draw (-6,3.27) node [above right]{$A_{i}$};

\coordinate (Q) at (-2, 0.8);
\node  at (Q) {$\bigcdot$};
\coordinate (P) at (-2, 0.6);
\node  at (P) {$\bigcdot$};

\coordinate (OHOE) at (-7, 3.5);
\coordinate (OHQE) at (-7, 3.8);
\coordinate (OHPE) at (-7, 4.1);
\coordinate (AQE) at (-7, 3.8875);
\coordinate (APE) at (-7, 3.9375);

\draw[dotted] (OHOE)--(O)  (0.05, -0.005)  node [below right]   {$\OO_X$};

\draw (OHQE)--(Q) node [above right]{$Q$}; 
\draw[dashed] (OHPE)--(P)  node [below right]{$P$};
\draw[dotted] (OHPE)--(P); 

\draw[red] (AQE)--(Q) ;
\draw[red, dashed] (APE)--(P);
\draw[red, dotted] (APE)--(P);

\end{tikzpicture}
\caption{In this picture we compare the slope of $\OO_X(-H)[2]$ and $A_i$.  The slope with respect to $P$ is determined by the dotted-dashed lines, while that with respect to $Q$ by full lines. We use red lines for the slope of $A_i$ and black lines for that of $\OO_X(-H)$. Note that $A_i$ has bigger slope at $P$ than $\OO_X(-H)[2]$, while for $Q$ approaching $(-\frac{1}{2}, \frac{1}{4})$ the slope of $A_i$ with respect to $Q$ is less than that of $\OO_X(-H)[2]$. \label{fig_1}}
\end{figure}
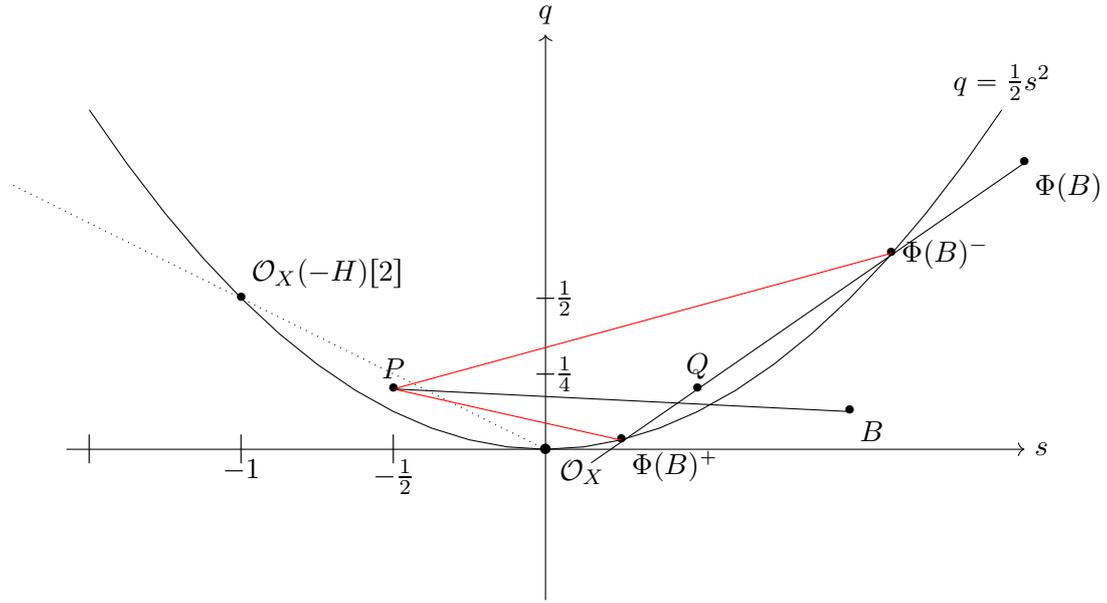
\begin{figure}[htb]
\centering
\begin{tikzpicture}[domain=-6:6]
\draw[->] (-6.3,0) -- (6.3,0) node[right] {$s$};
\draw[->] (0,-2) -- (0, 5.5) node[above] {$q$};
\draw plot (\x,{0.125*\x*\x}) node [above] {$q = \frac{1}{2}s^{2}$};
\coordinate (O) at (0,0);
\node  at (O) {$\bullet$};

\coordinate (q1) at (0,1);
\node  at (q1) {$-$};
\draw (0, 1) node [right]  {$\frac{1}{4}$};
\coordinate (q2) at (0,2);
\node  at (q2) {$-$};
\draw (0, 2) node [right]  {$\frac{1}{2}$};
\coordinate (s1) at (-2,0);
\node  at (s1) {$|$};
\draw (-2,0) node [below]  {$-\frac{1}{2}$};
\coordinate (s2) at (-4,0);
\node  at (s2) {$\mid$};
\draw (-4,0) node [below]  {$-1$};
\coordinate (s3) at (-6,0);
\node  at (s3) {$\mid$};

\coordinate (OH) at (-4,2);
\node  at (OH) {$\bigcdot$};
\draw (-4,2) node [above right]  {$\mathcal{O}_{X}(-H)[2]$};

\coordinate (Q) at (2, 0.8);
\node  at (Q) {$\bigcdot$};
\draw (2,0.8) node [above] {$Q$};

\coordinate (P) at (-2, 0.8);
\node  at (P) {$\bigcdot$};
\draw (-2,0.8) node [above] {$P$};

\coordinate (OHOE) at (-7, 3.5);
\draw[dotted] (OHOE)--(O)  (0.05, -0.005)  node [below right]   {$\OO_X$};

\coordinate (PBJA) at (4.55, 2.6);
\node  at (PBJA) {$\bigcdot$};
\draw (4.55, 2.6) node [right]  {$\Phi(B)^{-}$};
\draw[red] (P)--(PBJA);

\coordinate (PBJAB) at (6.3, 3.798);
\node  at (PBJAB) {$\bigcdot$};

\coordinate (PBJ) at (1, 0.12);
\node  at (PBJ) {$\bigcdot$};
\draw (1, 0.12) node [below right]  {$\Phi(B)^{+}$};

\draw[red] (P)--(PBJ) ;

\coordinate (B) at (4, 0.5);
\node  at (B) {$\bigcdot$};
\draw (P)--(B) node [below right]  {$B$};

\coordinate (sq4) at (0.6, -0.18);
\draw (sq4)--(PBJAB) node [below right]  {$\Phi(B)$};

\end{tikzpicture}
\caption{The slope with respect to $P$ of the semistable factors of $\Phi(B)$ is between that of $\Phi(B)^-$ and $\Phi(B)^+$. \label{fig_2}} 
\end{figure}

Secondly, we show \eqref{eq_phi(B)}, i.e.\ that
$$\Phi(B) \in \langle \Coh^{-\frac{1}{2}}(X)_{\mu'_{\frac{1}{4}-\varepsilon, - \frac{1}{2}}>-\frac{1}{2}}, \Coh^{-\frac{1}{2}}(X)[1] \rangle.$$
By Lemma \ref{lemma_firsttilt} we know that $\Phi(B)$ has semistable factors in $\Coh^{-\frac{1}{2}}(X)$ and $\Coh^{-\frac{1}{2}}(X)[1]$. Up to passing to a stable factor, we may assume that $B$ is $\sigma'_{\frac{1}{4}-\varepsilon,-\frac{1}{2}}$-stable. Then $\Phi(B)$ is $\sigma'_{\frac{1}{4}-\varepsilon,\frac{1}{2}}$-stable. We can use \cite[Lemma 3]{LiZhao2} (see also \cite[Lemma 3.1]{FLLQ}) to control the slope of the $\sigma'_{\frac{1}{4}-\varepsilon,-\frac{1}{2}}$-semistable factors of $\Phi(B)$. Indeed, let $\sigma_R$ be a stability condition corresponding to a point $R$ on the segment connecting $P=(-\frac{1}{2},\frac{1}{4}-\varepsilon)$ and $Q=(\frac{1}{2},\frac{1}{4}-\varepsilon)$. The point $\Phi(B)$ has coordinate $s(\Phi(B))=s(B)+1$ and belongs to the parabola $q-\frac{1}{2}s^2=q(B)-\frac{1}{2}s(B)^2$. Denote by $\Phi(B)^+$ and $\Phi(B)^-$ the points where the parabola $q-\frac{1}{2}s^2=0$ and the line connecting $\Phi(B)$ and $Q$ intersect. By \cite[Lemma 3]{LiZhao2} the slope of a $\sigma_R$-semistable factor $F$ of $\Phi(B)$ satisfies $\mu_R(\Phi(B)^-) \leq \mu_R(F) \leq \mu_R(\Phi(B)^+)$ (see Figure \ref{fig_2}). Since $\mu_P(\Phi(B)^-) \geq \mu_P(B)>-\frac{1}{2}$, we deduce that the $\sigma_P$-semistable factors of $\Phi(B)$ are either in $\Coh^{-\frac{1}{2}}(X)$ with slope $\mu_P>-\frac{1}{2}$ or in $\Coh^{-\frac{1}{2}}(X)[1]$, as we claimed.

Finally, we study $\Phi(A)[1]$. By Lemma \ref{lemma_firsttilt} the semistable factors of $\Phi(A)[1]$ are in $\Coh^{-\frac{1}{2}}(X)[1]$ and $\Coh^{-\frac{1}{2}}(X)[2]$. Up to considering a stable factor, we may assume that $A$ is $\sigma'_{\frac{1}{4}-\varepsilon,-\frac{1}{2}}$-stable. Note that by definition $A[1]$ determines a point between the parabola $q -\frac{1}{2}s^2=0$ and the line $l_P$ passing through $P$ and parallel to the line $q=-\frac{1}{2}s$. 

Assume that $A[1]$ is in the region between the line $l_{P\OO(-H)}$ connecting $P$ with the point $(-1,\frac{1}{2})$ corresponding to $\OO_X(-H)[2]$ and the parabola $q-\frac{1}{2}s^2=0$. Denote by $\Phi(A[1])^+$ and $\Phi(A[1])^-$ the intersection points of the line connecting $\Phi(A)[1]$ and $Q$. Then by \cite[Lemma 3]{LiZhao2} a $\sigma_P$-semistable factor $F$ of $\Phi(A)[1]$ satisfies $\mu_P(\Phi(A[1])^-) \leq \mu_P(F) \leq \mu_P(\Phi(A[1])^+)$. Since for such $A[1]$, the point $\Phi(A[1])^+$ is on the left of the line $l_P$, it follows that $\mu_P(F) \leq -\frac{1}{2}$ (see Figure \ref{fig_3}).
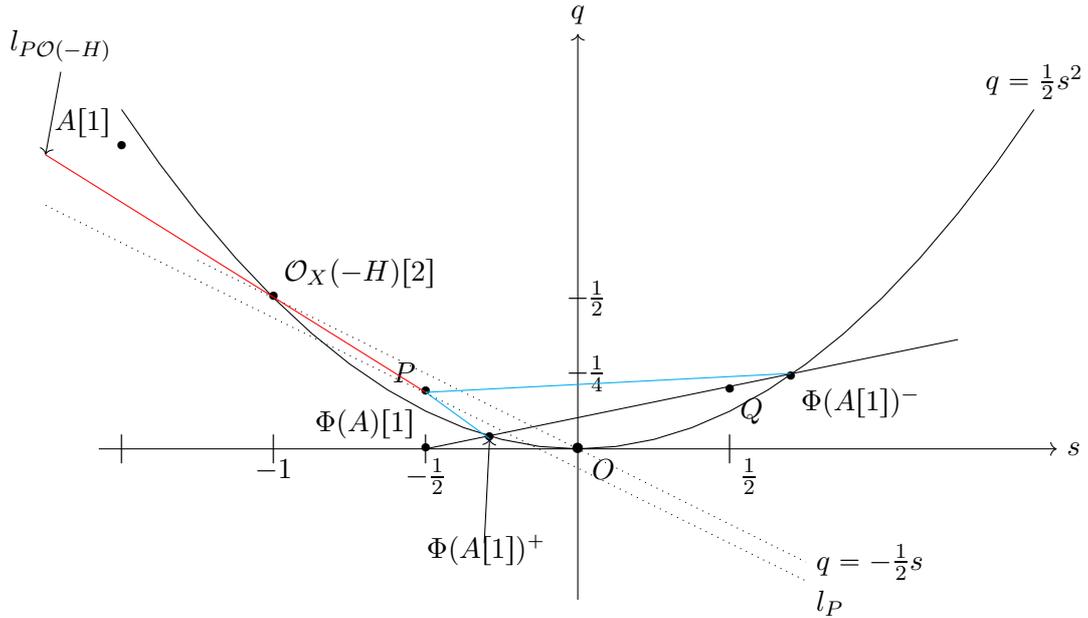
\begin{figure}[h!]
\centering
\begin{tikzpicture}[domain=-6:6]
\draw[->] (-6.3,0) -- (6.3,0) node[right] {$s$};
\draw[->] (0,-2) -- (0, 5.5) node[above] {$q$};
\draw plot (\x,{0.125*\x*\x}) node [above] {$q = \frac{1}{2}s^{2}$};
\coordinate (O) at (0,0);
\node  at (O) {$\bullet$};
\draw (0.05, -0.005)  node [below right]   {$O$};
\coordinate (q1) at (0,1);
\node  at (q1) {$-$};
\draw (0, 1) node [right]  {$\frac{1}{4}$};
\coordinate (q2) at (0,2);
\node  at (q2) {$-$};
\draw (0, 2) node [right]  {$\frac{1}{2}$};
\coordinate (s1) at (-2,0);
\node  at (s1) {$|$};
\node  at (s1) {$\bigcdot$};
\draw (-2, 0) node [above left]  {$\small{\Phi(A)[1]}$};

\coordinate (sj1) at (2,0);
\node  at (sj1) {$|$};

\draw (-2,0) node [below]  {$-\frac{1}{2}$};
\draw (2,0) node [below right]  {$\frac{1}{2}$};
\coordinate (s2) at (-4,0);
\node  at (s2) {$\mid$};
\draw (-4,0) node [below]  {$-1$};
\coordinate (s3) at (-6,0);
\node  at (s3) {$\mid$};

\coordinate (OH) at (-4,2);
\node  at (OH) {$\bigcdot$};
\draw (-4,2) node [above right]  {$\mathcal{O}_{X}(-H)[2]$};

\coordinate (Q) at (2, 0.78);
\node  at (Q) {$\bigcdot$};
\draw (2,0.8) node [below right] {$Q$};

\coordinate (P) at (-2, 0.75);
\node  at (P) {$\bigcdot$};
\draw (-2, 0.75) node [above left] {$P$};

\coordinate (OHOE) at (-5, 2.5);
\coordinate (OE) at (3, -1.5);
\draw[dotted] (OHOE)--(OE)  node [right]  {$q=-\frac{1}{2}s$};

\coordinate (PAJ) at (2.8, 0.95);
\node  at (PAJ) {$\bigcdot$};
\draw (PAJ) node [below right]  {$\Phi(A[1])^{-}$};

\coordinate (PHA) at (-1.16, 0.135);
\node  at (PHA) {$\bigcdot$};

\coordinate (OHE) at (-7, 3.9);
\draw[red] (P)--(OHE);

\coordinate (PHAJ) at (-1.23, -1.25);
\draw (-1.2, -1) node [below ]  {$\Phi(A[1])^{+}$};

\coordinate (PHAE) at (5, 1.45);

\draw (s1)--(PHAE);

\draw[->] (PHAJ)--(PHA);
\coordinate (PAJJ) at (2.8, 0.9999);
\draw[cyan] (P)--(PAJJ);
\draw[cyan] (P)--(PHA);

\coordinate (PRE) at (3, -1.75);
\coordinate (PLE) at (-7, 3.23);
\draw[dotted] (PLE)--(PRE) node [below right]  {$l_{P}$};

\coordinate (LP) at (-6.8, 5);
\draw (-6.8, 5) node [above]  {$l_{P\OO(-H)}$};
\draw[->] (LP)--(OHE);

\coordinate (AS) at (-6, 4);
\node  at (AS) {$\bigcdot$};
\draw (-6,4) node [above left] {$A[1]$};

\end{tikzpicture}
\caption{Case of $A[1]$ above the line connecting $P$ and $\OO_X(-H)[2]$. \label{fig_3}} 
\end{figure}
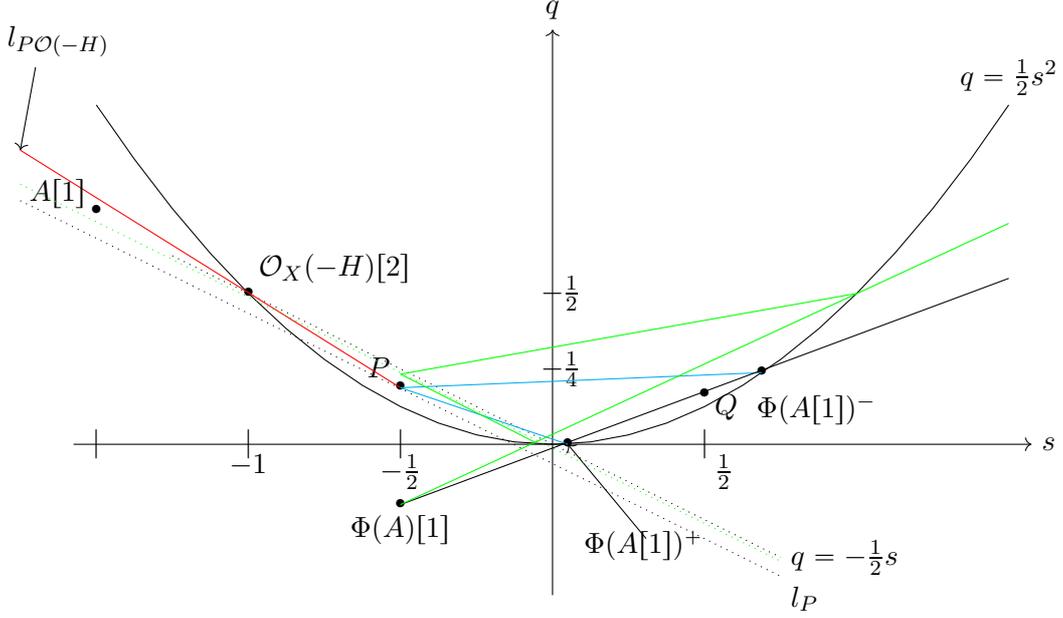
\begin{figure}[h!]
\centering
\begin{tikzpicture}[domain=-6:6]
\draw[->] (-6.3,0) -- (6.3,0) node[right] {$s$};
\draw[->] (0,-2) -- (0, 5.5) node[above] {$q$};
\draw plot (\x,{0.125*\x*\x}) node [above] {$q = \frac{1}{2}s^{2}$};

\coordinate (q1) at (0,1);
\node  at (q1) {$-$};
\draw (0, 1) node [right]  {$\frac{1}{4}$};
\coordinate (q2) at (0,2);
\node  at (q2) {$-$};
\draw (0, 2) node [right]  {$\frac{1}{2}$};
\coordinate (s1) at (-2,0);
\node  at (s1) {$|$};

\coordinate (sj1) at (2,0);
\node  at (sj1) {$|$}; 
\draw (-2,0) node [below]  {$-\frac{1}{2}$};
\draw (2,0) node [below right]  {$\frac{1}{2}$};
\coordinate (s2) at (-4,0);
\node  at (s2) {$\mid$};
\draw (-4,0) node [below]  {$-1$};
\coordinate (s3) at (-6,0);
\node  at (s3) {$\mid$};

\coordinate (OH) at (-4,2);
\node  at (OH) {$\bigcdot$};
\draw (-4,2) node [above right]  {$\mathcal{O}_{X}(-H)[2]$};

\coordinate (Q) at (2, 0.66);
\node  at (Q) {$\bigcdot$};
\draw (2,0.8) node [below right] {$Q$};

\coordinate (P) at (-2, 0.75);
\node  at (P) {$\bigcdot$};
\draw (-2, 0.75) node [above left] {$P$};

\coordinate (OHOE) at (-5, 2.5);
\coordinate (OE) at (3, -1.5);

\draw[dotted] (OHOE)--(OE)  node [right]  {$q=-\frac{1}{2}s$};

\coordinate (PAJ) at (2.75, 0.95);
\node  at (PAJ) {$\bigcdot$};
\draw (2.55, 0.8) node [below right]  {$\Phi(A[1])^{-}$};
\coordinate (PAJE) at (6, 2.2);

\draw[cyan] (P)--(PAJ);

\coordinate (PHA) at (-2, -0.8);
\node  at (PHA) {$\bigcdot$};
\draw (PHA) node [below ]  {$\Phi(A)[1]$};

\coordinate (sJ1) at (0.2,0);
\node  at (sJ1) {$\bigcdot$};

\coordinate (OHE) at (-7, 3.9);
\draw[red] (P)--(OHE);

\coordinate (PHAJ) at (1.23, -1.25);
\draw (1.2, -1) node [below ]  {$\Phi(A[1])^{+}$};
\draw[->] (PHAJ)--(sJ1);

\draw[cyan] (P)--(sJ1);

\coordinate (PRE) at (3, -1.75);
\coordinate (PLE) at (-7, 3.23);
\draw[dotted] (PLE)--(PRE) node [below right]  {$l_{P}$};

\coordinate (LP) at (-6.8, 5);
\draw (-6.5, 5) node [above]  {$l_{P\OO(-H)}$};
\draw[->] (LP)--(OHE);

\coordinate (AS) at (-6, 3.1);
\node  at (AS) {$\bigcdot$};
\draw (-6,3) node [above left] {$A[1]$};

\draw[] (PHA)--(PAJE);

\coordinate (PAB) at (-2, 0.93);
\coordinate (PAJA) at (4, 2);
\coordinate (PAJAE) at (6, 2.93);

\coordinate (PABL) at (-7, 3.45);
\coordinate (PABR) at (3, -1.55);

\draw[green,dotted] (PABL)--(PABR);

\draw[green] (PAB)--(PAJA);

\draw[green] (PHA)--(PAJAE);

\coordinate (sp1) at (-0.22, 0.022);
\draw[green] (PAB)--(sp1);

\end{tikzpicture}
\caption{Case of $A[1]$ below the line connecting $P$ and $\OO_X(-H)[2]$. Moving $P$ towards $(-\frac{1}{2},\frac{1}{4})$, the point $\Phi(A[1])^+$ moves to the left on the parabola $q-\frac{1}{2}s^2=0$, as represented by the green lines in the picture. \label{fig_4}} 
\end{figure}
This implies that $\Phi(A)[1]$ has $\sigma'_{\frac{1}{4}-\varepsilon,-\frac{1}{2}}$-semistable factors in $\Coh^{-\frac{1}{2}}(X)[1]$ and in $\Coh^{-\frac{1}{2}}(X)_{\mu'_{\frac{1}{4}-\varepsilon, - \frac{1}{2}}\leq -\frac{1}{2}}[2]$, giving \eqref{eq_phi(A)} in this case.

Assume instead $A[1]$ is in the region between the lines $l_{P\OO(-H)}$ and $l_P$. Keeping the previous notation, it might happen that $\Phi(A[1])^+$ appears on the right of $l_P$. In this case, choosing a suitable $\varepsilon>0$, we can require that $\Phi(A[1])^+$ is on the left of $l_P$ (see Figure \ref{fig_4}). Then we reduce to the previous situation and we conclude that
$$\Phi(A)[1] \in \langle \Coh^{-\frac{1}{2}}(X)[1], \Coh^{-\frac{1}{2}}(X)_{\mu'_{\frac{1}{4}-\varepsilon, - \frac{1}{2}}\leq -\frac{1}{2}}[2]  \rangle.$$
Choosing $\varepsilon$ as above, we deduce the stament for $\alpha'=\frac{1}{2}-\varepsilon$.
\end{proof}

\begin{lemma}
\label{lemma_restriction}
For $0 < \alpha < \frac{1}{2}$, the heart $\L_{\OO_X}(\Phi(\AA(\alpha,-\frac{1}{2})))$ is a tilt of $\AA(\alpha,-\frac{1}{2})$.
\end{lemma}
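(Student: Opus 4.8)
The plan is to reduce the assertion, exactly as in the proofs of Lemmas~\ref{lemma_firsttilt} and \ref{prop_tilt}, to an inclusion of hearts and then feed it through Lemma~\ref{lemma_secondtilt}. Since $\L_{\OO_X}\circ\Phi$ is an autoequivalence of $\Ku(X)$ by \eqref{eq_Serrefunctor}, the category $\L_{\OO_X}(\Phi(\AA(\alpha,-\tfrac{1}{2})))$ is automatically the heart of a bounded $t$-structure on $\Ku(X)$; hence by \cite[Exercise~6.5]{MS} it is enough to show that
$$\L_{\OO_X}(\Phi(E))\in\langle\AA(\alpha,-\tfrac{1}{2}),\,\AA(\alpha,-\tfrac{1}{2})[1]\rangle\qquad\text{for every }E\in\AA(\alpha,-\tfrac{1}{2}).$$
By the last part of Lemma~\ref{prop_tilt} the heart $\AA(\alpha,-\tfrac{1}{2})$ is independent of $\alpha\in(0,\tfrac{1}{2})$, so every such $E$ lies in $\Coh^0_{\alpha,-\frac{1}{2}}(X)$ for all $0<\alpha<\tfrac{1}{2}$ and Lemma~\ref{lemma_secondtilt} applies to it.

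Fix $E\in\AA(\alpha,-\tfrac{1}{2})$ and choose $\alpha'=\tfrac{1}{2}-\varepsilon$ as in Lemma~\ref{lemma_secondtilt} (the $\varepsilon$ may depend on $E$, but $\AA(\alpha',-\tfrac{1}{2})=\AA(\alpha,-\tfrac{1}{2})$ in any case). Then $\Phi(E)\in\langle\Coh^0_{\alpha',-\frac{1}{2}}(X),\Coh^0_{\alpha',-\frac{1}{2}}(X)[1],\OO_X[2]\rangle$, and since $\OO_X$ is exceptional with $\Ext^1_X(\OO_X,\OO_X)=H^1(X,\OO_X)=0$ the top graded piece of the corresponding filtration is a direct sum $\OO_X^{\oplus m}[2]$. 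Hence there is a triangle $X_2\to\Phi(E)\to\OO_X^{\oplus m}[2]$ with $X_2\in\langle\Coh^0_{\alpha',-\frac{1}{2}}(X),\Coh^0_{\alpha',-\frac{1}{2}}(X)[1]\rangle$; applying the triangulated functor $\L_{\OO_X}$ and using $\L_{\OO_X}(\OO_X)=0$ gives $\L_{\OO_X}(\Phi(E))\cong\L_{\OO_X}(X_2)$, which in particular lies in $\Ku(X)$. By \cite[Lemma~4.3]{BLMS}, according to which the $\AA(\alpha',-\tfrac{1}{2})$-cohomology of an object of $\Ku(X)$ is computed by its $\Coh^0_{\alpha',-\frac{1}{2}}(X)$-cohomology, it now suffices to prove that $\L_{\OO_X}(X_2)$ has $\Coh^0_{\alpha',-\frac{1}{2}}(X)$-cohomology concentrated in degrees $-1$ and $0$.

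This last point is the substantive part. The plan is to analyse the mutation triangle $\mathsf{R}\Hom(\OO_X,X_2)\otimes\OO_X\to X_2\to\L_{\OO_X}(X_2)$ and track cohomology with respect to $\Coh^0_{\alpha',-\frac{1}{2}}(X)$. Using $\OO_X\in\Coh^0_{\alpha',-\frac{1}{2}}(X)$ (proof of Theorem~\ref{thm_U}) and comparing with the triangle above, one identifies $\mathsf{R}\Hom(\OO_X,X_2)$, in degrees $\geq0$, with $\mathsf{R}\Hom(\OO_X,\Phi(E))=\mathsf{R}\Hom(\OO_X(-H),E)=\mathsf{R}\Gamma(X,E\otimes\OO_X(H))$; the crucial vanishings
$$H^2(X,E\otimes\OO_X(H))=H^3(X,E\otimes\OO_X(H))=0$$
then follow from Serre duality on $X$ together with $\OO_X(-3H)[2]\in\Coh^0_{\alpha',-\frac{1}{2}}(X)$ (a computation of the type in the proof of Theorem~\ref{thm_U}) and the vanishing of negative $\Hom$'s inside the heart, while $\OO_X(-H)[2]\in\Coh^0_{\alpha',-\frac{1}{2}}(X)$ bounds $\mathsf{R}\Hom(\OO_X,X_2)$ from below. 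This confines $\mathsf{R}\Hom(\OO_X,X_2)$ to degrees $-1,0,1$, and the long exact sequence of the mutation triangle then shows that $\L_{\OO_X}(X_2)$ can at worst acquire extra cohomology in degree $-2$, equal to the kernel of a morphism from a power of $\OO_X$ to an object of $\Coh^0_{\alpha',-\frac{1}{2}}(X)$, hence a subobject of a power of $\OO_X$ in the heart that lies in $\Ku(X)$. Finally, such a subobject vanishes: its cokernel $Q$ is right-orthogonal to $\OO_X(H)$ by Kodaira vanishing $H^\bullet(X,\OO_X(-H))=0$, hence lies in $\langle\Ku(X),\OO_X\rangle\subset\D(X)=\langle\Ku(X),\OO_X,\OO_X(H)\rangle$, and decomposing $Q$ along this semiorthogonal decomposition forces the subobject to be zero. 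Therefore $\L_{\OO_X}(X_2)\in\langle\AA(\alpha,-\tfrac{1}{2}),\AA(\alpha,-\tfrac{1}{2})[1]\rangle$, which completes the argument. The hard part, and the place where the geometry of the cubic threefold enters rather than formal properties of mutations, is precisely this control of the mutation triangle: a priori $\mathsf{R}\Hom(\OO_X,X_2)\otimes\OO_X$ could push cohomology of $\L_{\OO_X}(X_2)$ out of degrees $\{-1,0\}$, and ruling this out rests on the vanishing theorems above and on the exact positions of the twists of $\OO_X$ in the double-tilted heart.
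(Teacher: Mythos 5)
Your reduction to the statement ``$\L_{\OO_X}(\Phi(E))$ has $\Coh^0_{\alpha',-\frac{1}{2}}(X)$-cohomology in degrees $-1,0$'' and your bound on $\mathsf{R}\Hom(\OO_X,X_2)$ (degrees $-1,0,1$, via Serre duality and the positions of $\OO_X(-H)[2]$, $\OO_X(-3H)[2]$ in the double-tilted heart) are fine, but the last step --- the vanishing of the degree $-2$ cohomology --- is not proved, and that is the substantive content of the lemma. What you actually obtain there is $K:=\HH^{-2}(\L_{\OO_X}(X_2))=\ker\bigl(\Hom(\OO_X[1],X_2)\otimes\OO_X\to\HH^{-1}(X_2)\bigr)$, a subobject of a power of $\OO_X$ in $\Coh^0_{\alpha',-\frac{1}{2}}(X)$ lying in $\Ku(X)$. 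Your argument that such a $K$ vanishes does not close: writing $Q$ for the image of the evaluation map, the decomposition of $Q$ in $\langle\Ku(X),\OO_X\rangle$ is precisely the triangle $\Hom(\OO_X,Q)\otimes\OO_X\to Q\to\L_{\OO_X}(Q)$ with $\L_{\OO_X}(Q)\cong K[1]$, which is perfectly consistent with $K\neq 0$ and yields no contradiction. The mutation orthogonality does not help either: $\Hom(\OO_X[2],\L_{\OO_X}(X_2))=0$ only gives $\Hom(\OO_X,K)=0$, and this holds automatically for the kernel of an evaluation map. So the orthogonality constraints you have assembled do not rule out $K\neq 0$.

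The paper's proof is organized differently exactly to avoid this: it writes $\L_{\OO_X}\circ\Phi=\Phi\circ\L_{\OO_X(-H)}$, first forms $G=\L_{\OO_X(-H)}(E)$ and computes its cohomology in $\Coh^0_{\alpha,-\frac{1}{2}}(X)$, and only then applies $\Phi$. The payoff is that Lemma \ref{lemma_secondtilt} can be applied not just to $E$ but also to the subquotients $Q$ and $\HH^0(G)$ arising in the mutation; this forces the potentially dangerous degree $-2$ cohomology of $\Phi(\HH^0(G))$ to be an honest direct sum $\OO_X^{\oplus m}$, not merely a subobject of one, and a trivial bundle \emph{is} detected by $\Hom(\OO_X[2],-)$ and killed by $\Hom(\OO_X[2],\L_{\OO_X}\Phi(E))=0$. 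To salvage your more direct route you would need an analogous mechanism identifying $K$ with a trivial bundle, which I do not see without essentially reproducing that analysis. (A smaller point: splitting off the ``top graded piece'' $\OO_X^{\oplus m}[2]$ as a quotient of $\Phi(E)$ needs justification, since the extension closure $\langle\Coh^0_{\alpha',-\frac{1}{2}}(X),\Coh^0_{\alpha',-\frac{1}{2}}(X)[1],\OO_X[2]\rangle$ does not come with ordered filtrations; the reordering does follow from the vanishing of $\Hom$ from $\OO_X[2]$ into the other two pieces, but it should be said.)
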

\begin{proof}
Note that $\L_{\OO_X}(\Phi(-))= \Phi(\L_{\OO_X(-H)}(-))$ as $\Phi$ is an equivalence. Take $E \in \AA(\alpha, -\frac{1}{2})$ and set $G:= \L_{\OO_X(-H)}(E)$. Since $\OO_X(-H)[2], \OO_X(-3H)[2]$ are in $\Coh^0_{\alpha,-\frac{1}{2}}(X)$, by Serre duality we have
$\Hom(\OO_X(-H)[2], E[i])=0$ for every $i \notin \lbrace 0, 1, 2, 3 \rbrace$. In particular, $G$ is defined by 
$$\OO_X(-H)^{\oplus k_0}[2] \oplus \OO_X(-H)^{\oplus k_1}[2][-1] \oplus \OO_X(-H)^{\oplus k_2}[2][-2] \oplus \OO_X(-H)^{\oplus k_3}[2][-3] \to E \to G.$$
The induced long exact sequence of cohomology in $\Coh^0_{\alpha,-\frac{1}{2}}(X)$ is given by
$$0 \to \HH^{-1}(G) \to \OO_X(-H)^{\oplus k_0}[2] \to E \to \HH^0(G) \to \OO_X(-H)^{\oplus k_1}[2] \to 0$$ 
and $\HH^1(G) \cong \OO_X(-H)^{\oplus k_2}[2]$, $\HH^2(G) \cong \OO_X(-H)^{\oplus k_3}[2]$.

Up to deforming $\alpha \to \frac{1}{2}$ we may assume that $\OO_X(-H)^{\oplus k_0}[2]$ is the HN factor of $E$ with maximal slope. Also $E$ satisfies the hypothesis of Lemma \ref{lemma_secondtilt}, as $E \in \AA(\alpha, -\frac{1}{2})=\AA(\alpha',-\frac{1}{2})$ for every $0 < \alpha' < \frac{1}{2}$ by Lemma \ref{prop_tilt}. So we can find $\alpha$ such that $\Phi(E) \in \langle \Coh^0_{\alpha,-\frac{1}{2}}(X), \Coh^0_{\alpha,-\frac{1}{2}}(X)[2], \OO_X[2] \rangle$. As a consequence, we have $\HH^{-1}(G)=0$; also writing
$$0 \to \OO_X(-H)^{\oplus k_0}[2] \to E \to Q \to 0,$$
\begin{equation}
\label{eq_H0(G)}
0 \to Q \to \HH^0(G) \to \OO_X(-H)^{\oplus k_1}[2] \to 0,
\end{equation}
it follows that the cohomology in $\Coh^0_{\alpha,-\frac{1}{2}}(X)$ of $\Phi(E)$ in degree $-2$ is $\OO_X^{\oplus k_0}$ and $\Phi(Q)$ has non trivial cohomology with respect to $\Coh^0_{\alpha,-\frac{1}{2}}(X)$ only in the degrees $0, -1$.

Applying $\Phi$ to \eqref{eq_H0(G)} we get
$$ \Phi(Q) \to \Phi(\HH^0(G)) \to \OO_X^{\oplus k_1}[2].$$
The long exact sequence of cohomology in $\Coh^0_{\alpha, -\frac{1}{2}}(X)$ is of the form
$$0 \to \HH^{-2}(\Phi(\HH^0(G))) \to \OO_X^{\oplus k_1} \to \HH^{-1}(\Phi(Q)) \to \HH^{-1}(\Phi(\HH^0(G))) \to 0$$
and $\HH^0(\Phi(Q))\cong \HH^0(\Phi(\HH^0(G)))$. Note that $\HH^0(G)$ satisfies the hypothesis of Lemma \ref{lemma_secondtilt}, as $Q$ and $\OO_X(-H)[2]$ do. As a consequence, $\HH^{-2}(\Phi(\HH^0(G))) \cong \OO_X^{\oplus m}$. 

On the other hand, by the previous computation of the cohomology of $G$, we can write
$$G' \to \Phi(G) \to \Phi(\HH^2(G))[-2]\cong \OO_X^{\oplus k_3}$$
and
$$ \Phi(\HH^0(G)) \to G' \to \Phi(\HH^1(G))[-1]\cong \OO_X^{\oplus k_2}[1].$$
Then we have
$$\Hom(\OO_X[2], \Phi(\HH^0(G)))= \Hom(\OO_X[2], G') = \Hom(\OO_X[2], \Phi(G))=0.$$ This contradicts the fact that $\HH^{-2}(\Phi(\HH^0(G))) \cong \OO_X^{\oplus m}$. 

As a consequence, we deduce that $\Phi(\HH^0(G)) \in \langle \Coh^0_{\alpha,-\frac{1}{2}}(X), \Coh^0_{\alpha,-\frac{1}{2}}(X)[1] \rangle$. Then $G'$ satisfies the same property and thus also $\Phi(G)$ does. In particular, we have proved that $\Phi(G)$ has non trivial cohomology in $\Coh^0_{\alpha,-\frac{1}{2}}(X)$ only in the degrees $0, -1$. Since $\Phi(G) \in \Ku(X)$, its cohomology is also in $\Ku(X)$ by \cite[Lemma 4.3]{BLMS}. We deduce that $\Phi(G) \in \langle \AA(\alpha, -\frac{1}{2}), \AA(\alpha, -\frac{1}{2})[1] \rangle$ as we wanted.
\end{proof}

We are now ready to prove that $\L_{\OO_X} \circ \Phi$ preserves the orbit $\KK$.
\begin{prop}
\label{prop_twistfctrvsstabcond}
There exists $\tilde{g} \in \tilde{\emph{GL}}^+_2(\R)$ such that 
$$(\L_{\OO_X} \circ \Phi) \cdot \sigma(\alpha,-\frac{1}{2})= \sigma(\alpha,-\frac{1}{2}) \cdot \tilde{g}.$$
\end{prop}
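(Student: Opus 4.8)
The plan is to follow the same strategy as in the proof of Proposition \ref{prop_conncomp}: I will check that the autoequivalence $\Psi := \L_{\OO_X} \circ \Phi$ changes the central charge of $\sigma(\alpha,-\frac{1}{2})$ within the $\mathrm{GL}^+_2(\R)$-orbit and the heart within the tilting class of $\AA(\alpha,-\frac{1}{2})$, and then conclude by the uniqueness statement \cite[Lemma 8.11]{BMS}. First I would recall that $\Psi$ restricts to an exact autoequivalence of $\Ku(X)$ (this is built into \eqref{eq_Serrefunctor}, cf.\ \cite[Lemma 4.1]{Kuz04}), so that
$$\Psi \cdot \sigma(\alpha,-\tfrac{1}{2}) = \big(\Psi(\AA(\alpha,-\tfrac{1}{2})),\ Z(\alpha,-\tfrac{1}{2}) \circ \Psi_*^{-1}\big)$$
is a stability condition on $\Ku(X)$, where $\Psi_*$ is the induced automorphism of $\NN(\Ku(X)) \cong \Z^2$.

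Next I would compute $\Psi_*$ in the basis $\{\kappa_1,\kappa_2\}$. Using $[\Phi(E)] = \ch(E)\, e^{H}$ and $[\L_{\OO_X}(F)] = [F] - \chi(\OO_X,F)[\OO_X]$, together with Hirzebruch--Riemann--Roch on the cubic threefold (the Todd class $\td(X)$ is the one recalled in the proof of Proposition \ref{lem_numclassI}), this is a finite linear-algebra computation giving an explicit integer matrix; the point to extract is that $\det \Psi_* = 1$. Alternatively one can bypass the bookkeeping by noting that $\Psi_*$ preserves the Euler pairing on $\NN(\Ku(X))$, whose antisymmetrization is a unimodular alternating form, whence $\Psi_* \in \mathrm{SL}_2(\Z)$. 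Since $Z(\alpha,-\frac{1}{2})$ is injective by Lemma \ref{lemma_orient} and $\det \Psi_* = 1$, the ordered bases $\big(Z(\alpha,-\frac{1}{2})(\kappa_1), Z(\alpha,-\frac{1}{2})(\kappa_2)\big)$ and $\big(Z(\alpha,-\frac{1}{2})(\Psi_*^{-1}\kappa_1), Z(\alpha,-\frac{1}{2})(\Psi_*^{-1}\kappa_2)\big)$ of $\C$ have the same orientation; hence there exists $\tilde g = (g,M) \in \tilde{\mathrm{GL}}^+_2(\R)$ with $M^{-1}\circ Z(\alpha,-\frac{1}{2}) = Z(\alpha,-\frac{1}{2})\circ \Psi_*^{-1}$, i.e.\ $\sigma(\alpha,-\frac{1}{2})\cdot \tilde g$ and $\Psi\cdot\sigma(\alpha,-\frac{1}{2})$ have the same central charge.

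Finally I would compare the hearts. By construction the heart of $\sigma(\alpha,-\frac{1}{2})\cdot\tilde g$ is $\PP(\alpha,-\frac{1}{2})((r,r+1])$ with $r=g(0)$, and after composing $\tilde g$ with an integer shift we may assume $r\in[0,1)$, so that this heart is a tilt of $\AA(\alpha,-\frac{1}{2})$. On the other hand, by Lemma \ref{lemma_restriction} the heart $\Psi(\AA(\alpha,-\frac{1}{2})) = \L_{\OO_X}(\Phi(\AA(\alpha,-\frac{1}{2})))$ of $\Psi\cdot\sigma(\alpha,-\frac{1}{2})$ is also a tilt of $\AA(\alpha,-\frac{1}{2})$. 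Two stability conditions with the same central charge and with hearts that are both tilts of the common heart $\AA(\alpha,-\frac{1}{2})$ coincide by \cite[Lemma 8.11]{BMS}, giving $\Psi\cdot\sigma(\alpha,-\frac{1}{2}) = \sigma(\alpha,-\frac{1}{2})\cdot\tilde g$, as claimed.

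The main obstacle I anticipate is the $K$-theoretic computation of $\Psi_*$ together with the verification that it preserves orientation, that is $\det \Psi_* = 1$: if this failed, $\Psi$ would carry $\sigma(\alpha,-\frac{1}{2})$ out of the component $\KK$ (into the mirror $\mathrm{GL}^-_2(\R)$-orbit), so this sign is the only substantive point. Everything else is routine bookkeeping, since the heart-level input is already packaged in Lemma \ref{lemma_restriction}.
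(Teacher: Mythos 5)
Your proposal is correct and follows essentially the same strategy as the paper: verify that the central charge stays in the $\mathrm{GL}^+_2(\R)$-orbit, invoke Lemma \ref{lemma_restriction} to see that $\L_{\OO_X}\Phi(\AA(\alpha,-\frac{1}{2}))$ is a tilt of $\AA(\alpha,-\frac{1}{2})$, and conclude with \cite[Lemma 8.11]{BMS}. The only (harmless) variation is in checking the orientation: the paper computes $(\L_{\OO_X}\circ\Phi)_*^{-1}$ explicitly on $[\II_\ell]$ and $[\JJ_\ell]$ via the mutation triangles, whereas you either redo this in the $\kappa$-basis or observe abstractly that $\Psi_*$ preserves the unimodular antisymmetrized Euler form and hence lies in $\mathrm{SL}_2(\Z)$ --- both are valid.
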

\begin{proof}
By definition the stability condition $(\L_{\OO_X} \circ \Phi) \cdot \sigma(\alpha,-\frac{1}{2})$ has heart $\L_{\OO_X}\Phi(\AA(\alpha,-\frac{1}{2}))$ and stability function $Z':=Z \circ (\L_{\OO_X} \circ \Phi)_*^{-1}$. On the other hand, the objects $\L_{\OO_X}\Phi(\II_\ell)$ and $\L_{\OO_X}\Phi(\JJ_\ell)$ are defined respectively by the triangles
$$\OO_X^{\oplus 3} \to \II_\ell(H) \to \L_{\OO_X}\Phi(\II_\ell) \quad \text{and} \quad \OO_X \oplus \OO_X[1] \to \JJ_\ell(H) \to \L_{\OO_X}\Phi(\JJ_\ell).$$
A standard computation shows that $(\L_{\OO_X} \circ \Phi)_*^{-1}([\II_\ell])=-[\JJ_\ell]$ and $(\L_{\OO_X} \circ \Phi)_*^{-1}([\JJ_\ell])=[\II_\ell]-[\JJ_\ell]$. Then it is possible to check that the basis $\lbrace Z(\alpha,-\frac{1}{2})(-[\JJ_\ell]), Z(\alpha,-\frac{1}{2})([\II_\ell]-[\JJ_\ell]) \rbrace$ and $\lbrace Z(\alpha,-\frac{1}{2})([\II_\ell]), Z(\alpha,-\frac{1}{2})([\JJ_\ell]) \rbrace$ have the same orientation. Thus there exists $\tilde{g} \in \tilde{\text{GL}}^+_2(\R)$ such that $\sigma(\alpha,-\frac{1}{2}) \cdot \tilde{g} =\sigma'$, where $\sigma'=(\AA',Z')$. Up to shifting, we may assume $\AA'$ is a tilt of $\AA(\alpha,-\frac{1}{2})$. Since by Lemma \ref{lemma_restriction}, the heart $\L_{\OO_X}\Phi(\AA)$ is a tilt of $\AA(\alpha,-\frac{1}{2})$, by \cite[Lemma 8.11]{BMS} we conclude $\sigma'=(\L_{\OO_X} \circ \Phi) \cdot \sigma(\alpha,-\frac{1}{2})$, as we claimed.
\end{proof}

As a direct consequence, we obtain the following property of the Serre functor.
\begin{cor}
\label{cor_serrefunctorc3}
The Serre functor of $\Ku(X)$ preserves the orbit $\KK$.
\end{cor}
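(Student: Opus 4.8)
The plan is to read the statement off from Proposition \ref{prop_twistfctrvsstabcond} together with the formula \eqref{eq_Serrefunctor} for the inverse Serre functor; the only additional ingredient is the compatibility between the left $\Aut(\Ku(X))$-action and the right $\tilde{\mathrm{GL}}^+_2(\R)$-action on $\Stab(\Ku(X))$. First I would record the following elementary remark: since these two actions commute, if an autoequivalence $\Psi$ of $\Ku(X)$ satisfies $\Psi\cdot\sigma(\alpha_0,-\tfrac12)\in\KK$, then
$$\Psi\cdot\KK=\Psi\cdot\bigl(\sigma(\alpha_0,-\tfrac12)\cdot\tilde{\mathrm{GL}}^+_2(\R)\bigr)=\bigl(\Psi\cdot\sigma(\alpha_0,-\tfrac12)\bigr)\cdot\tilde{\mathrm{GL}}^+_2(\R)=\KK,$$
where the last equality uses that $\KK$ is by definition a single $\tilde{\mathrm{GL}}^+_2(\R)$-orbit. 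In particular, whether an autoequivalence preserves $\KK$ can be tested on the one stability condition $\sigma(\alpha_0,-\tfrac12)$, and the autoequivalences of $\Ku(X)$ preserving $\KK$ form a subgroup of $\Aut(\Ku(X))$.

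Next I would check the building blocks of \eqref{eq_Serrefunctor} one at a time. The autoequivalence $\L_{\OO_X}\circ\Phi$ preserves $\KK$: this is precisely Proposition \ref{prop_twistfctrvsstabcond}, which (applied with $\alpha=\alpha_0\in(0,\tfrac12)$) gives $(\L_{\OO_X}\circ\Phi)\cdot\sigma(\alpha_0,-\tfrac12)=\sigma(\alpha_0,-\tfrac12)\cdot\tilde g$ for a suitable $\tilde g\in\tilde{\mathrm{GL}}^+_2(\R)$, so the remark above applies. The shift $[-3]$ preserves $\KK$ as well, because the shift functor $[n]$ acts on $\Stab(\Ku(X))$ via the right $\tilde{\mathrm{GL}}^+_2(\R)$-action, through an element $(g,M)$ with $g$ a translation and $M=(-1)^n\,\mathrm{id}\in\mathrm{GL}^+_2(\R)$; hence it preserves every orbit, in particular $\KK$.

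Finally, by \eqref{eq_Serrefunctor} we have $S_{\Ku(X)}^{-1}=(\L_{\OO_X}\circ\Phi)\circ(\L_{\OO_X}\circ\Phi)\circ[-3]$, a composition of autoequivalences each preserving $\KK$; since such autoequivalences form a subgroup, $S_{\Ku(X)}^{-1}$ preserves $\KK$, and therefore so does its inverse $S_{\Ku(X)}$, which is the assertion. I do not expect any serious obstacle here: all the geometric content has already been extracted in Proposition \ref{prop_twistfctrvsstabcond} (and the lemmas preceding it), and what remains is the formal bookkeeping of the two commuting group actions and of the shift appearing in \eqref{eq_Serrefunctor}.
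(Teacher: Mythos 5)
Your argument is correct and is essentially the paper's own proof: reduce via the formula $S_{\Ku(X)}^{-1}=(\L_{\OO_X}\circ\Phi)^2[-3]$ to Proposition \ref{prop_twistfctrvsstabcond}, using that the $\tilde{\mathrm{GL}}^+_2(\R)$-action commutes with autoequivalences (the paper leaves implicit the observations you spell out about the shift acting through $\tilde{\mathrm{GL}}^+_2(\R)$ and about passing from $S_{\Ku(X)}^{-1}$ to $S_{\Ku(X)}$, but these are exactly the intended bookkeeping).
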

\begin{proof}
By \eqref{eq_Serrefunctor} it is enough to prove the statement for $\L_{\OO_X} \circ \Phi$. But this is a consequence of Proposition \ref{prop_twistfctrvsstabcond} and the fact that the action of $\tilde{\text{GL}}^+_2(\R)$ commutes with the action of $\Aut(\Ku(X))$.
\end{proof}

\begin{rmk}
We point out that the proof of Proposition \ref{prop_twistfctrvsstabcond}, and in particular of the previous lemmas, works for every Fano threefold of Picard rank $1$ and index $2$. Indeed, we have never used the fact that $X$ is a cubic threefold. Moreover, by \cite[Proposition 3.8]{Kuz04} and $\omega_X=\OO_X(-2H)$, the Serre functor of $\Ku(X)$ still satisfies the relation 
$$S_{\Ku(X)}^{-1}=(\L_{\OO_X} \circ \Phi) \circ (\L_{\OO_X} \circ \Phi)[-3].$$
In this paper, we only need the result for $d=3$, but we state the more general proposition for the reader convenience.
\begin{prop}
Let $X$ be a Fano threefold of Picard rank $1$ and index $2$. Then there exists $\tilde{g} \in \tilde{\emph{GL}}^+_2(\R)$ such that 
$$(\L_{\OO_X} \circ \Phi) \cdot \sigma(\alpha,-\frac{1}{2})= \sigma(\alpha,-\frac{1}{2}) \cdot \tilde{g}.$$
In particular, the Serre functor of $\Ku(X)$ preserves the orbit $\KK$. 
\end{prop}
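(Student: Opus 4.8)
The plan is to run the argument of Proposition~\ref{prop_twistfctrvsstabcond} and Corollary~\ref{cor_serrefunctorc3} essentially verbatim, the point being that neither that proof nor the preparatory Lemmas~\ref{lemma_firsttilt}, \ref{lemma_secondtilt} and \ref{lemma_restriction} ever used the hypothesis $d=3$. First I would record that, by \cite[Proposition~3.8]{Kuz04} together with $\omega_X=\OO_X(-2H)$, the identity \eqref{eq_Serrefunctor}, i.e.\ $S_{\Ku(X)}^{-1}=(\L_{\OO_X}\circ\Phi)\circ(\L_{\OO_X}\circ\Phi)[-3]$, holds for every Fano threefold $X$ of Picard rank $1$ and index $2$. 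Since even shifts act trivially on the set of $\tilde{\mathrm{GL}}^+_2(\R)$-orbits in $\Stab(\Ku(X))$ and the $\tilde{\mathrm{GL}}^+_2(\R)$-action commutes with $\Aut(\Ku(X))$, it then suffices to produce $\tilde g\in\tilde{\mathrm{GL}}^+_2(\R)$ with $(\L_{\OO_X}\circ\Phi)\cdot\sigma(\alpha,-\frac{1}{2})=\sigma(\alpha,-\frac{1}{2})\cdot\tilde g$ for every $0<\alpha<\frac{1}{2}$; the ``in particular'' clause then follows at once.

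For the heart, I would check line by line that Lemmas~\ref{lemma_firsttilt}, \ref{lemma_secondtilt} and \ref{lemma_restriction} use only data available for an arbitrary such $X$: the semiorthogonal decomposition $\D(X)=\langle\Ku(X),\OO_X,\OO_X(H)\rangle$; the line bundles $\OO_X$, $\OO_X(\pm H)$, $\OO_X(-3H)$ and their $\mu_H$-slopes $0$, $\pm1$, $-3$; and the weak stability conditions $\sigma_{\alpha,\beta}$, $\sigma^0_{\alpha,\beta}$ of Section~2. Indeed, Lemma~\ref{lemma_firsttilt} only uses that $\otimes\,\OO_X(H)$ raises $\mu_H$ by $1$; Lemma~\ref{lemma_secondtilt} only manipulates the position of $\OO_X(-H)[2]$ in the $(s,q)$-plane of Remark~\ref{rmk_planeofstability}, whose coordinates $(-1,\frac{1}{2})$ are independent of $d$, together with \cite[Lemma~3]{LiZhao2}; and Lemma~\ref{lemma_restriction} uses only that $\OO_X(-H)[2]$ and $\OO_X(-3H)[2]$ lie in $\Coh^0_{\alpha,-\frac{1}{2}}(X)$ for $0<\alpha<\frac{1}{2}$, which follows from the same sign computations for $Z_{\alpha,-\frac{1}{2}}$ used in the proof of Theorem~\ref{thm_U}. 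This yields that $\L_{\OO_X}\Phi(\AA(\alpha,-\frac{1}{2}))$ is a tilt of $\AA(\alpha,-\frac{1}{2})$.

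For the central charge, I would compute the action of $(\L_{\OO_X}\circ\Phi)_*$ from the triangles $\OO_X^{\oplus 3}\to\II_\ell(H)\to\L_{\OO_X}\Phi(\II_\ell)$ and $\OO_X\oplus\OO_X[1]\to\JJ_\ell(H)\to\L_{\OO_X}\Phi(\JJ_\ell)$, which make sense for every $d$; since $[\JJ_\ell]=\kappa_2-\kappa_1$, this is the purely K-theoretic computation of Proposition~\ref{prop_twistfctrvsstabcond}, giving $(\L_{\OO_X}\circ\Phi)_*^{-1}([\II_\ell])=-[\JJ_\ell]$ and $(\L_{\OO_X}\circ\Phi)_*^{-1}([\JJ_\ell])=[\II_\ell]-[\JJ_\ell]$. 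As in that proof, because the corresponding integer matrix in the basis $[\II_\ell],[\JJ_\ell]$ of $\NN(\Ku(X))$ has positive determinant and $\{Z(\alpha,-\frac{1}{2})(\kappa_1),Z(\alpha,-\frac{1}{2})(\kappa_2)\}$ is a positively oriented basis of $\C$ by Lemma~\ref{lemma_orient} (valid for all $d$), there exists $\tilde g\in\tilde{\mathrm{GL}}^+_2(\R)$ with $\sigma(\alpha,-\frac{1}{2})\cdot\tilde g=(\AA',Z')$, where $Z'=Z(\alpha,-\frac{1}{2})\circ(\L_{\OO_X}\circ\Phi)_*^{-1}$. Shifting $\AA'$ to a tilt of $\AA(\alpha,-\frac{1}{2})$, the two stability conditions $(\L_{\OO_X}\circ\Phi)\cdot\sigma(\alpha,-\frac{1}{2})$ and $\sigma(\alpha,-\frac{1}{2})\cdot\tilde g$ then have the same central charge and hearts that are tilts of $\AA(\alpha,-\frac{1}{2})$, so they coincide by \cite[Lemma~8.11]{BMS}; this proves the proposition.

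The main obstacle is not a new computation but the bookkeeping in the second step: one must verify that the case distinctions and the figures in the proof of Lemma~\ref{lemma_secondtilt} — tracking how the $\sigma'_{\frac{1}{4}-\varepsilon,-\frac{1}{2}}$-semistable factors of $\Phi(A)$ and $\Phi(B)$ migrate in the $(s,q)$-plane as $P\to(-\frac{1}{2},\frac{1}{4})$ — remain valid word for word. They do, because those arguments invoke only the object $E$ (which lies in the relevant heart by hypothesis) and the line bundle $\OO_X(-H)$, whose $(s,q)$-coordinates are the same for every Fano threefold of Picard rank $1$ and index $2$; nothing special to cubic threefolds enters.
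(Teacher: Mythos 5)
Your proposal is correct and follows the paper's own route exactly: the paper proves this proposition precisely by observing that Lemmas \ref{lemma_firsttilt}, \ref{lemma_secondtilt}, \ref{lemma_restriction} and the proof of Proposition \ref{prop_twistfctrvsstabcond} nowhere use $d=3$, and that $S_{\Ku(X)}^{-1}=(\L_{\OO_X}\circ\Phi)\circ(\L_{\OO_X}\circ\Phi)[-3]$ holds for all $d$ by \cite[Proposition 3.8]{Kuz04}. One small correction: the explicit $d=3$ formulas you transcribe are not literally degree-independent --- one has $\chi(\OO_X,\II_\ell(H))=d$, so the first triangle reads $\OO_X^{\oplus d}\to\II_\ell(H)\to\L_{\OO_X}\Phi(\II_\ell)$, and $(\L_{\OO_X}\circ\Phi)_*^{-1}([\JJ_\ell])=[\II_\ell]+(2-d)[\JJ_\ell]$ rather than $[\II_\ell]-[\JJ_\ell]$. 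This does not affect the argument, since the matrix of $(\L_{\OO_X}\circ\Phi)_*$ in the basis $\kappa_1,\kappa_2$ is $\left(\begin{smallmatrix}1-d&-d\\1&1\end{smallmatrix}\right)$, of determinant $1$ for every $d$, so the orientation comparison via Lemma \ref{lemma_orient} and the conclusion by \cite[Lemma 8.11]{BMS} go through unchanged.
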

\end{rmk}

\subsection{$S_{\Ku(X)}$-invariant stability conditions}
Let $X$ be a cubic threefold. Let us introduce the following notion.
\begin{dfn}
A stability condition $\sigma$ on $\Ku(X)$ is $S_{\Ku(X)}$-invariant if $S_{\Ku(X)} \cdot \sigma=\sigma \cdot \tilde{g}$ for $\tilde{g} \in \tilde{\text{GL}}^+_2(\R)$.
\end{dfn}

In the next lemmas, we prove some properties of the heart of a $S_{\Ku(X)}$-invariant stability condition.

\begin{lemma}
\label{lemma_boundphaseS}
For every $S_{\Ku(X)}$-invariant stability condition $\sigma$, if $F$ is a $\sigma$-semistable object of phase $\phi(F)$, then the phase of $S_{\mathsf{Ku}(X)}(F)$ satisfies $\phi(F) < \phi(S_{\mathsf{Ku}(X)}(F)) < \phi(F)+2$.
\end{lemma}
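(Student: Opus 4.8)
The plan is to combine the $S_{\Ku(X)}$-invariance with the Calabi--Yau-$5/3$ property $S_{\Ku(X)}^3 \cong [5]$ in order to pin down the amount by which $S_{\Ku(X)}$ can shift a phase. First I would use the hypothesis that $\sigma$ is $S_{\Ku(X)}$-invariant: there is $\tilde g=(g,M)\in\tilde{\mathrm{GL}}^+_2(\R)$ with $S_{\Ku(X)}\cdot\sigma=\sigma\cdot\tilde g$. By the description of the left $\Aut$-action and the right $\tilde{\mathrm{GL}}^+_2(\R)$-action on $\Stab(\Ku(X))$ recalled in Section~2, this means that for a $\sigma$-semistable object $F$ the object $S_{\Ku(X)}(F)$ is again $\sigma$-semistable and its phase is $\phi(S_{\Ku(X)}(F))=g(\phi(F))$, where $g\colon\R\to\R$ is an increasing function with $g(\phi+1)=g(\phi)+1$. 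So the statement to prove becomes the purely order-theoretic assertion that $\phi < g(\phi) < \phi+2$ for all $\phi\in\R$, i.e.\ that $g(\phi)-\phi\in(0,2)$.

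Next I would exploit the relation $S_{\Ku(X)}^3\cong[5]$. Iterating the invariance relation three times gives $S_{\Ku(X)}^3\cdot\sigma=\sigma\cdot\tilde g^3$, and on the other hand $S_{\Ku(X)}^3\cdot\sigma=[5]\cdot\sigma=\sigma\cdot\widetilde{[-5]}$, where $\widetilde{[-5]}$ is the central element of $\tilde{\mathrm{GL}}^+_2(\R)$ acting by $\phi\mapsto\phi-5$ on slicings (shift by $5$ on objects shifts phases by $5$, and the compensating $\tilde g$ goes the other way). Since the $\tilde{\mathrm{GL}}^+_2(\R)$-action is free, $\tilde g^3$ is this central element, so $g$ satisfies $g^{(3)}(\phi)=\phi+5$ for every $\phi$; equivalently $g$ is conjugate (via the function $\phi\mapsto\tfrac{3}{5}\phi$) to the translation $\phi\mapsto\phi+\tfrac53$. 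In particular the ``rotation number'' of $g$ is $5/3$, and $g$ has no fixed points, so $g(\phi)-\phi$ never vanishes and, being continuous in $\phi$ and $1$-periodic as a function of $\phi$, has constant sign; since $g^{(3)}(\phi)=\phi+5>\phi$ it must be positive, giving the lower bound $\phi(F)<\phi(S_{\Ku(X)}(F))$.

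For the upper bound, suppose toward a contradiction that $g(\phi_0)\ge\phi_0+2$ for some $\phi_0$. Monotonicity of $g$ together with the normalization $g(\phi+1)=g(\phi)+1$ forces $g(\psi)\ge\psi+2$ for all $\psi$ in general? One must be a little careful: monotonicity alone gives $g(\phi)\ge g(\phi_0)\ge\phi_0+2$ only for $\phi\ge\phi_0$, but combining with $1$-periodicity of $\phi\mapsto g(\phi)-\phi$ one does get $g(\psi)\ge\psi+2-1=\psi+1$, hence $\delta:=\min_\psi(g(\psi)-\psi)\ge 1$ while $\max_\psi(g(\psi)-\psi)\ge 2$; iterating, $g^{(3)}(\psi)-\psi\ge 3\delta+ (\text{extra from the large value})$, and a short estimate shows $g^{(3)}(\psi)-\psi>5$ for the relevant $\psi$, contradicting $g^{(3)}(\psi)=\psi+5$. (Concretely: if $g(\phi)-\phi$ attains the value $2$ somewhere it attains values $\ge 1$ everywhere, and then one checks $g(g(g(\phi_0)))\ge g(g(\phi_0+2))=g(g(\phi_0))+2\ge g(\phi_0+2)+2 = g(\phi_0)+4\ge \phi_0+6>\phi_0+5$.) Hence $g(\phi)-\phi<2$ for all $\phi$, which is the upper bound.

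The main obstacle I anticipate is being scrupulous about the bookkeeping of the $\tilde{\mathrm{GL}}^+_2(\R)$-action: getting the direction of the shift right in $S_{\Ku(X)}^3\cdot\sigma=[5]\cdot\sigma$ versus $\sigma\cdot\tilde g^3$ (so that one correctly deduces $g^{(3)}(\phi)=\phi+5$ rather than $\phi-5$), and justifying that freeness of the action lets one pass from the equality of stability conditions to the equality $\tilde g^3=\widetilde{[-5]}$ of group elements, hence to the functional equation for $g$. Once that identity is in hand, everything else is elementary monotone-map arithmetic; strictness of both inequalities follows from $g$ having no fixed points (so $g(\phi)\ne\phi$) and from the fact that $g(\phi)=\phi+2$ would again, via iteration, give $g^{(3)}(\phi)=\phi+6\ne\phi+5$.
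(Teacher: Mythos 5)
Your argument is correct and is essentially the paper's own proof: both write $\phi(S_{\Ku(X)}(F))=g(\phi(F))$ for an increasing $g$ with $g(\phi+1)=g(\phi)+1$, use $S_{\Ku(X)}^3\cong[5]$ to get $g^{(3)}(\phi)=\phi+5$, rule out $g(\phi)\geq\phi+2$ by iterating to obtain $g^{(3)}(\phi)\geq\phi+6$, and get the lower bound from $g^{(3)}(\phi)>\phi$. (The parenthetical claim that $g$ is conjugate to the translation by $\tfrac{5}{3}$ via $\phi\mapsto\tfrac{3}{5}\phi$ is not literally true --- only a topological conjugacy holds --- but nothing in your argument depends on it.)
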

\begin{proof}
By definition, there exists $\tilde{g}=(g,M) \in \tilde{\text{GL}}^+_2(\R)$ such that $S_{\Ku(X)} \cdot \sigma = \sigma \cdot \tilde{g}$. Thus for every $\sigma$-semistable object $F$, the image $S_{\Ku(X)}(F)$ is $\sigma$-semistable with phase $\phi(S_{\Ku(X)} (F))=g(\phi(F))$. 

Assume $\phi(S_{\Ku(X)}(F)) \geq \phi(F)+2$. Then we have
$$\phi(S_{\Ku(X)}^2(F))=g(g(\phi(F))) \geq g(\phi(F)+2) =g(\phi(F))+2 \geq \phi(F)+4,$$
as $g$ is an increasing function. Similarly, we get $\phi(S_{\Ku(X)}^3(F)) \geq \phi(F)+6$, in contradiction with the fact that $\phi(S_{\Ku(X)}^3(F)) = \phi(F)+5$. Also $F$ and $S_{\Ku(X)}(F)$ cannot have the same phase with respect to $\sigma$, since otherwise we would have $\phi(S^3(F))=\phi(F)+5=\phi(F)$, which is impossible.
\end{proof}

\begin{lemma}
\label{lemma_homdimheartc3}
For every $S_{\Ku(X)}$-invariant stability condition $\sigma=(\AA,Z)$, the heart $\AA$ has homological dimension $2$.
\end{lemma}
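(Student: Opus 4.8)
The goal is to show that $\AA$ has homological dimension $2$, i.e.\ that $\Hom_{\Ku(X)}(E,F[k])=0$ for all $E,F\in\AA$ and all $k>2$, while exhibiting a nonzero $\Hom(E,F[2])$ for some $E,F\in\AA$ so that the bound is sharp. The upper bound is the substantive part. My plan is to combine two facts: first, that $\Ku(X)$ is a Calabi--Yau category of (fractional) dimension $5/3$, so Serre duality gives $\Hom(E,F[k])\cong\Hom(F,S_{\Ku(X)}(E)[-k])^\vee$; second, that $\sigma$ is $S_{\Ku(X)}$-invariant, so by Lemma \ref{lemma_boundphaseS} the Serre functor moves the phase of a semistable object by strictly less than $2$. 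The strategy is to reduce to semistable $E$ and $F$ (via Harder--Narasimhan filtrations, since $\Hom$ out of / into an extension vanishes if it vanishes on the factors) and then run a phase inequality.

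Concretely, assume $E,F\in\AA=\PP((0,1])$ are $\sigma$-semistable of phases $\phi(E),\phi(F)\in(0,1]$, and suppose $\Hom(E,F[k])\neq 0$ with $k>0$. Since $F[k]\in\PP(\phi(F)+k)$, a nonzero morphism forces $\phi(E)\le \phi(F)+k$; this is automatic for $k\ge 1$ and gives nothing. To get an \emph{upper} bound on $k$, apply Serre duality: $\Hom(E,F[k])\cong\Hom(F,S_{\Ku(X)}(E)[-k])^\vee$, so $\Hom(F[k],S_{\Ku(X)}(E))\neq 0$. By Lemma \ref{lemma_boundphaseS}, $S_{\Ku(X)}(E)$ is $\sigma$-semistable of phase $\phi(S_{\Ku(X)}(E))<\phi(E)+2\le 3$. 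A nonzero morphism from a semistable object of phase $\phi(F)+k$ to one of phase $\phi(S_{\Ku(X)}(E))$ requires $\phi(F)+k\le\phi(S_{\Ku(X)}(E))<\phi(E)+2$, hence $k<\phi(E)-\phi(F)+2$. Since $\phi(E)\le 1$ and $\phi(F)>0$ we get $k<3$, i.e.\ $k\le 2$. This proves the vanishing $\Hom(E,F[k])=0$ for $k>2$ on semistable objects, and the HN reduction extends it to all of $\AA$. For sharpness, I would note that $\AA$ is not the heart of a category of homological dimension $\le 1$ (otherwise, combined with the finite-rank numerical Grothendieck group, $\Ku(X)$ would behave like the derived category of a curve, contradicting the CY-$5/3$ property / the known structure, e.g.\ via the Serre functor identity $S^3=[5]$), so some $\Hom(E,F[2])$ must be nonzero; alternatively one exhibits an explicit pair such as a shift of $\II_\ell$ and $\JJ_\ell$ using the morphism computations recorded in the remarks after Theorem \ref{Fanolines_modspace}.

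The main obstacle I anticipate is the careful bookkeeping in the Serre-duality step: one must make sure the ``fractional'' Calabi--Yau structure is used correctly (it is $S_{\Ku(X)}$ itself, not a shift, that appears in Serre duality for $\Ku(X)$ as a $\C$-linear category, and $S_{\Ku(X)}^3\cong[5]$ only controls the cube), and that Lemma \ref{lemma_boundphaseS} is applied to $E$ (not $F$) with the strict inequality on both sides. A secondary point is the HN-reduction: one should phrase it as ``if $\Hom(A_i,B_j[k])=0$ for all HN factors then $\Hom(E,F[k])=0$,'' which follows from the long exact sequences, together with the observation that phases of HN factors of objects in $\AA$ still lie in $(0,1]$, so the phase estimate above applies uniformly to each pair of factors. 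Once those are set up, the inequality $k<\phi(E)-\phi(F)+2\le 3$ closes the argument.
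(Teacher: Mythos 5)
Your proposal is correct and follows essentially the same route as the paper: Serre duality to rewrite $\Hom(E,F[k])$ as $\Hom(F[k],S_{\Ku(X)}(E))$, the phase bound $\phi(S_{\Ku(X)}(E))<\phi(E)+2$ from Lemma \ref{lemma_boundphaseS}, and a comparison of phases of semistable factors to kill $k\geq 3$. The only difference is your added discussion of sharpness (exhibiting a nonzero $\Hom(E,F[2])$), which the paper does not address and which is not needed for the way the lemma is used later; the vanishing for $k\geq 3$ is the whole content.
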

\begin{proof}
Let $A, B \in \AA$. The vanishing $\Hom^{i}(A,B)=0$ for $i < 0$ follows from the property of heart of $\AA$. Note that by Lemma \ref{lemma_boundphaseS}, the $\sigma$-semistable factors of $S_{\Ku(X)}(A)$ have phase in the interval $(0,3)$. On the other hand, the $\sigma$-semistable factors of $B[i]$ have phase in the interval $(i, i+1]$. By Serre duality, we deduce that
$$\Hom(A,B[i])= \Hom(B[i],S_{\Ku(X)}(A))=0 \quad \text{for } i \geq 3,$$ 
since the semistable factors of $B[i]$ have phase greater than the phase of the semistable factors of $S_{\Ku(X)}(A)$. This proves our claim.
\end{proof}

\begin{lemma}
\label{lemma_nohom1=0,1c3}
For every $S_{\Ku(X)}$-invariant stability condition $\sigma=(\AA,Z)$, there are no nonzero objects $A \in \AA$ with $\Hom^1(A,A)=0$ or $\Hom^1(A,A)\cong \C$.
\end{lemma}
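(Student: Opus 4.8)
The plan is to establish the stronger statement that $\dim_{\C}\Hom^1(A,A)\ge 2$ for every nonzero $A\in\AA$; this obviously rules out both $\Hom^1(A,A)=0$ and $\Hom^1(A,A)\cong\C$. The arithmetic input is that the Euler form on the numerical Grothendieck group $\NN(\Ku(X))$ of a cubic threefold is negative definite. Using the basis $\kappa_1=[\II_\ell]$, $\kappa_2$ recalled before Lemma \ref{lemma_orient}, the Todd class $\td(X)=(1,H,\frac{2}{3}H^2,\frac{1}{3}H^3)$ (the case $d=3$ of \cite[Lemma 1.2]{Li}) and Hirzebruch--Riemann--Roch, one computes the Gram matrix
$$\begin{pmatrix}\chi(\kappa_1,\kappa_1)&\chi(\kappa_1,\kappa_2)\\\chi(\kappa_2,\kappa_1)&\chi(\kappa_2,\kappa_2)\end{pmatrix}=\begin{pmatrix}-1&-1\\-2&-3\end{pmatrix},$$
so that $\chi(v,v)=-(x^2+3xy+3y^2)$ for $v=x\kappa_1+y\kappa_2$. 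The binary form $x^2+3xy+3y^2$ has discriminant $-3$, hence is positive definite and takes only values $\ge 1$ on nonzero integral vectors; therefore $\chi(v,v)\le -1$ for every nonzero $v\in\NN(\Ku(X))$. (Alternatively one may simply cite this negative definiteness as a known property of $\Ku(X)$ for cubic threefolds, cf.\ \cite{BMMS}.)

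Next I would feed in the homological constraints afforded by $S_{\Ku(X)}$-invariance. By Lemma \ref{lemma_homdimheartc3} the heart $\AA$ has homological dimension $2$, and by the heart axiom $\Hom^i(A,B)=0$ for $i<0$; writing $\hom^i:=\dim_{\C}\Hom^i$ we get, for any nonzero $A\in\AA$,
$$\chi(A,A)=\hom^0(A,A)-\hom^1(A,A)+\hom^2(A,A).$$
Since $\id_A\neq 0$ we have $\hom^0(A,A)\ge 1$, and trivially $\hom^2(A,A)\ge 0$; combining with $\chi(A,A)\le -1$ gives
$$\hom^1(A,A)=\hom^0(A,A)+\hom^2(A,A)-\chi(A,A)\ge 1+0+1=2,$$
which is precisely the claim.

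I do not expect any real obstacle; the two points to keep straight are: (i) the class $[A]$ of a nonzero $A\in\AA$ lands in the rank-two lattice $\NN(\Ku(X))=\Z\kappa_1\oplus\Z\kappa_2$ and is itself nonzero --- the latter because $\sigma$ is a genuine (not merely weak) stability condition, so $Z(A)\neq 0$ --- whence the negative definiteness of $\chi$ does apply; and (ii) the vanishing $\hom^{>2}(A,A)=0$ genuinely relies on the $S_{\Ku(X)}$-invariance of $\sigma$, since a general heart in the fractional Calabi--Yau category $\Ku(X)$ may well have self-extensions in unboundedly high degree, so it is Lemma \ref{lemma_homdimheartc3}, and behind it Lemma \ref{lemma_boundphaseS}, that does the essential work. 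The final write-up should be only a few lines.
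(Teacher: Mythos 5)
Your proposal is correct and follows essentially the same route as the paper: the paper likewise cites the negative definiteness of the Euler pairing on $\mathcal{N}(\Ku(X))$ (via \cite[Proposition 2.7(ii)]{BMMS}, which you reprove by a direct Gram-matrix computation) and combines $\chi(A,A)\leq -1$ with the homological dimension $2$ of the heart from Lemma \ref{lemma_homdimheartc3} to conclude $\hom^1(A,A)\geq 2$. Your remarks (i) and (ii) correctly identify the two points where the argument genuinely uses that $\sigma$ is a stability condition and that it is $S_{\Ku(X)}$-invariant.
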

\begin{proof}
This is the analogous of \cite[Lemma 4.4]{BMMS}. By \cite[Proposition 2.7(ii)]{BMMS} we know that $\chi(A,A) \leq -1$. Then if $A \in \AA$, by Lemma \ref{lemma_homdimheartc3}, we must have $\hom^1(A,A) \geq 2$.
\end{proof}

The previous lemmas allow to prove a weak version for cubic threefolds of the Mukai Lemma proved in \cite[Lemma 2.5]{BaBri} for K3 surfaces. Set $\hom^1(A,A):=\text{dim}\Hom^1(A,A)$.
\begin{lemma}[Weak Mukai Lemma]
\label{lemma_Mukailemmac3}
Let $\sigma$ be a $S_{\Ku(X)}$-invariant stability condition. Let $A \to E \to B$ be a triangle in $\mathsf{Ku}(X)$ such that $\Hom(A,B)=0$ and  the $\sigma$-semistable factors of $A$ have phases greater or equal than the phases of the $\sigma$-semistable factors of $B$. Then
$$\hom^1(A,A)+ \hom^1(B,B) \leq \hom^1(E,E).$$
\end{lemma}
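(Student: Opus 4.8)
The plan is to mimic the classical Mukai Lemma argument for K3 surfaces (as in \cite[Lemma 2.5]{BaBri}), replacing the use of the Calabi--Yau-$2$ property with the two inputs we have available for $S_{\Ku(X)}$-invariant stability conditions: that $\AA$ has homological dimension $2$ (Lemma \ref{lemma_homdimheartc3}), and that the phase of $S_{\Ku(X)}$ of a semistable object only increases, staying within a window of length $2$ (Lemma \ref{lemma_boundphaseS}). First I would apply $\Hom(-,A)$, $\Hom(-,E)$, $\Hom(-,B)$ to the triangle $A \to E \to B$ and assemble the resulting long exact sequences into the standard $3\times 3$ hypercohomology-type diagram, the key consequence of which is the inequality
$$\hom^1(E,E) \geq \hom^1(A,A) + \hom^1(B,B) + \hom^1(A,B) + \hom^1(B,A) - \hom^0(\text{correction terms}) - \hom^2(\text{correction terms}),$$
so the whole point is to control the $\Hom^0$ and $\Hom^2$ contributions coming from the ``off-diagonal'' pieces $\Hom^\bullet(A,B)$ and $\Hom^\bullet(B,A)$.

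The hypothesis $\Hom(A,B)=0$ kills one of these immediately. For the rest: since $\AA$ has homological dimension $2$, the only possibly-nonzero groups among $\Hom^i(A,B)$ and $\Hom^i(B,A)$ are in degrees $i=0,1,2$. The phase hypothesis --- that the semistable factors of $A$ have phases $\geq$ those of $B$ --- gives $\Hom^{<0}(B,A)=0$ for free (that is automatic from the heart anyway), but more importantly, combined with Serre duality $\Hom^2(A,B)=\Hom(B,S_{\Ku(X)}(A))^\vee[-2]$... wait, more precisely $\Hom^i(A,B) \cong \Hom^{?-i}(B,S_{\Ku(X)}(A))^\vee$, and by Lemma \ref{lemma_boundphaseS} the semistable factors of $S_{\Ku(X)}(A)$ have phases in $(\phi_{\min}(A), \phi_{\max}(A)+2)$, which lie strictly above those of $B$; this forces the relevant high-degree $\Hom$'s between $B$ and $S_{\Ku(X)}(A)$, hence $\Hom^2(A,B)$, to vanish. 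A symmetric Serre-duality argument handles $\Hom^0(B,A)$ and $\Hom^2(B,A)$. After these vanishings are in place, the diagram chase collapses to exactly $\hom^1(A,A)+\hom^1(B,B) \leq \hom^1(E,E)$ (the leftover $\hom^1(A,B)+\hom^1(B,A) \geq 0$ terms only help).

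The main obstacle I anticipate is bookkeeping the exact degrees in the Serre-duality step: $\Ku(X)$ for a cubic threefold is a fractional Calabi--Yau category ($S^3 \cong [5]$), so $S_{\Ku(X)}$ is not simply a shift, and one must be careful that the phase estimate of Lemma \ref{lemma_boundphaseS} --- namely $\phi(F) < \phi(S_{\Ku(X)}F) < \phi(F)+2$ --- is genuinely strong enough to force $\Hom^2(A,B)=0$ and $\Hom^{0}(B,A)=\Hom^2(B,A)=0$ under the hypothesis on relative phases of $A$ and $B$. Concretely, I would pass to Harder--Narasimhan factors of $A$ and $B$ and check the phase inequalities factor-by-factor: if $A_j$ is an HN factor of $A$ and $B_k$ one of $B$, then $\phi(B_k) \leq \phi(A_j)$ by hypothesis, so $\phi(B_k[2]) = \phi(B_k)+2 \leq \phi(A_j)+2$, while $\phi(S_{\Ku(X)}(A_j)) < \phi(A_j)+2$; since a nonzero morphism from a semistable object of phase $\phi$ to one of phase $\phi'$ forces $\phi \leq \phi'$, this gives $\Hom(B_k[2], S_{\Ku(X)}(A_j))=0$ unless the phases are equal, and the strict inequality in Lemma \ref{lemma_boundphaseS} rules that out --- hence $\Hom^2(A,B)=0$ by dévissage, and similarly for $\Hom^{\geq 2}(B,A)$. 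Once all the auxiliary vanishings are established, the inequality is a formal consequence of additivity of Euler characteristics along the triangle together with $\chi(A,A),\chi(B,B) \leq -1$ from Lemma \ref{lemma_nohom1=0,1c3}'s input $\chi \le -1$ is not even needed here; only the vanishing and the long exact sequence are.
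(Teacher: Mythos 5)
Your overall strategy is the same as the paper's: reduce to the argument of \cite[Lemma 2.5]{BaBri}, whose only non-formal input beyond $\Hom(A,B)=0$ is the vanishing of one more off-diagonal group, and supply that vanishing via Serre duality together with the phase bound $\phi(F)<\phi(S_{\Ku(X)}(F))<\phi(F)+2$ of Lemma \ref{lemma_boundphaseS}. However, the step you actually work out in detail is aimed at the wrong group, and the phase chase you give for it is invalid. The vanishing the Mukai argument needs is $\Hom(B,A[2])=0$. By Serre duality this is $\Hom(A[2],S_{\Ku(X)}(B))$, and here the inequalities do chain correctly: $\phi(A_j[2])=\phi(A_j)+2\geq\phi(B_k)+2>\phi(S_{\Ku(X)}(B_k))$, so the group vanishes factor by factor. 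This is exactly what the paper does. You instead compute $\Hom^2(A,B)\cong\Hom(B[2],S_{\Ku(X)}(A))^\vee$ and argue from $\phi(B_k)+2\leq\phi(A_j)+2$ and $\phi(S_{\Ku(X)}(A_j))<\phi(A_j)+2$; but two upper bounds by the same quantity do not give $\phi(B_k)+2>\phi(S_{\Ku(X)}(A_j))$ (take $\phi(A_j)=1$, $\phi(B_k)=0$, $\phi(S_{\Ku(X)}(A_j))=2.5$), so this does not prove $\Hom^2(A,B)=0$ — and that group need not vanish in general. Likewise your claim that $\Hom^0(B,A)=0$ is false (take $A=B$ semistable) and the hypotheses do not force it.

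Fortunately neither $\Hom^2(A,B)$ nor $\Hom^0(B,A)$ is needed: the correction terms in the diagram chase of \cite[Lemma 2.5]{BaBri} are precisely $\Hom(A,B)$ and $\Ext^2(B,A)$, the first killed by hypothesis and the second by the Serre-duality-plus-phase argument above (which you only gesture at with ``similarly for $\Hom^{\geq 2}(B,A)$''). So the proof is salvageable, but as written the one vanishing you verify carefully is both unnecessary and unproven, while the one that carries the argument is left implicit. To repair it, delete the claims about $\Hom^2(A,B)$ and $\Hom^0(B,A)$, and promote the ``symmetric'' argument for $\Hom(B,A[2])$ — carried out on HN factors exactly as you describe, but with $A$ and $B$ in the roles dictated by $\Hom(B,A[2])\cong\Hom(A[2],S_{\Ku(X)}(B))^\vee$ — to the main step.
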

\begin{proof}
By Serre duality, we have $\Hom(B,A[2])=\Hom(A[2],S_{\Ku(X)}(B))$. Assume for simplicity that $A$ and $B$ are $\sigma$-semistable. Since $\sigma$ is $S_{\Ku(X)}$-invariant and by Lemma \ref{lemma_boundphaseS}, the object $S_{\Ku(X)}(B)$ is $\sigma$-semistable with phase $\phi(S_{\Ku(X)}(B))< \phi(B)+2$. Thus $\phi(A[2]) \geq \phi(B)+2> \phi(S_{\Ku(X)}(B))$, which implies the vanishing $\Hom(A[2],S_{\Ku(X)}(B))=0$. If $A$ and $B$ are not $\sigma$-semistable, applying the same argument to their semistable factors, we get the required vanishing. Then the argument of \cite[Lemma 2.5]{BaBri} applies to this setting, implying the result.  
\end{proof}

The weak Mukai Lemma implies the stability of objects with $\hom^1(E,E)=2$.

\begin{lemma}
\label{lemma_stabobjectsext1=2}
Let $\sigma$ be a $S_{\Ku(X)}$-invariant stability condition. Then every $E \in \Ku(X)$ with $\hom^1(E,E)=2$ is $\sigma$-stable.
\end{lemma}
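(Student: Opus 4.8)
The plan is to argue by contradiction using the weak Mukai Lemma (Lemma \ref{lemma_Mukailemmac3}) together with the lower bound $\hom^1(A,A) \geq 2$ for every nonzero $A \in \AA$ established in Lemma \ref{lemma_nohom1=0,1c3}. Suppose $E \in \Ku(X)$ has $\hom^1(E,E)=2$ but is not $\sigma$-stable. Since the heart $\AA$ of a $S_{\Ku(X)}$-invariant stability condition has homological dimension $2$ (Lemma \ref{lemma_homdimheartc3}), up to shift we may assume $E \in \AA$; note $\Hom(E,E)=\C$ is forced by $\chi(E,E) \leq -1$ and $\hom^1(E,E)=2$ together with the homological dimension bound, so $E$ is at least simple. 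If $E$ is strictly $\sigma$-semistable, it fits in a short exact sequence $0 \to A \to E \to B \to 0$ in $\AA$ with $A$, $B$ nonzero of the same phase $\phi(E)$; I would choose $A$ to be the maximal destabilizing subobject (or a Jordan-Hölder factor situation) so that $\Hom(A,B)=0$ holds, or more robustly take $A$ a stable subobject of maximal phase and $B$ the quotient and split off common factors. If $E$ is not even semistable, the Harder-Narasimhan filtration gives a two-step triangle $A \to E \to B$ with the HN factors of $A$ of strictly larger phase than those of $B$, and again $\Hom(A,B)=0$ by the phase inequality.

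In either case, the weak Mukai Lemma applies and yields
$$\hom^1(A,A) + \hom^1(B,B) \leq \hom^1(E,E) = 2.$$
But $A$ and $B$ are nonzero objects of $\Ku(X)$; after shifting each into the heart $\AA$ (which does not change $\hom^1$ of an object with itself), Lemma \ref{lemma_nohom1=0,1c3} gives $\hom^1(A,A) \geq 2$ and $\hom^1(B,B) \geq 2$. Hence the left-hand side is at least $4$, contradicting the inequality. Therefore no such destabilizing sequence exists, and $E$ is $\sigma$-semistable; moreover the same computation shows $E$ admits no proper subobject of the same phase (a strictly semistable $E$ would again produce the forbidden sequence with $A$, $B$ of equal phase and $\Hom(A,B)=0$ after passing to Jordan-Hölder pieces), so $E$ is $\sigma$-stable.

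The main obstacle I anticipate is the bookkeeping needed to guarantee the vanishing $\Hom(A,B)=0$ in the semistable (equal-phase) case: one must pass to an appropriate piece of a Jordan-Hölder-type filtration so that $A$ and $B$ share no stable factors, and check that this does not spoil the phase hypothesis required by Lemma \ref{lemma_Mukailemmac3}. A secondary point is making sure that shifting $A$ and $B$ into $\AA$ is legitimate — this uses that the semistable factors of $A$ and $B$ have phases in $(\phi(E)-1,\phi(E)+1)$ or the relevant HN range, so a single shift places each of $A$, $B$ into $\AA$, and $\hom^1$ is shift-invariant. Once these two technical points are handled, the numerical contradiction $4 \leq 2$ closes the argument immediately.
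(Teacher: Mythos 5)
Your overall strategy coincides with the paper's: combine the weak Mukai Lemma with the bound $\hom^1(A,A)\geq 2$ for nonzero objects of the heart (Lemma \ref{lemma_nohom1=0,1c3}) to rule out destabilizing triangles, first in the unstable case via the Harder--Narasimhan filtration and then in the strictly semistable case via a Jordan--H\"older-type decomposition. However, there is a genuine gap exactly at the point you flag as ``the main obstacle'': the case where $E$ is strictly $\sigma$-semistable and \emph{all} of its stable factors are isomorphic to a single stable object $A$. In that situation no choice of short exact sequence $0\to A'\to E\to B'\to 0$ in $\PP(\phi)$ can achieve $\Hom(A',B')=0$, because $A'$ and $B'$ necessarily share the stable factor $A$ and hence admit a nonzero composition $A'\twoheadrightarrow A\hookrightarrow B'$ (already for $E$ a self-extension of $A$ one has $\Hom(A,A)\neq 0$). ``Splitting off common factors'' leaves nothing behind here, so the Mukai Lemma simply does not apply, and your proposed contradiction $4\leq 2$ is never reached. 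The paper closes this case by a different, purely numerical argument: $\hom^2(A,A)=\Hom(A[2],S_{\Ku(X)}(A))=0$ by Lemma \ref{lemma_boundphaseS}, so $\chi(A,A)=-1$, whence $\chi(E,E)=n^2\chi(A,A)=-n^2$ if $E$ has $n$ stable factors; since $\hom^1(E,E)=2$ and the heart has homological dimension $2$ force $\chi(E,E)\geq -1$, one gets $n=1$. Some such extra input is unavoidable, and it is missing from your proposal.

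A secondary, fixable inaccuracy: in the unstable case you assert that $A$ and $B$ can each be shifted into the heart because their semistable factors have phases in an interval of length $2$. For the two-step HN truncation $A\to E\to B$ this is not automatic --- the top HN factor of $A$ may have arbitrarily large phase, so $A$ need not be a shift of an object of $\AA$, and Lemma \ref{lemma_nohom1=0,1c3} does not apply to it directly. The paper instead applies the weak Mukai Lemma once more to the HN filtration of $A$ itself, concluding that if $\hom^1(A,A)\leq 1$ then some \emph{semistable} factor of $A$ (which is a shift of an object of the heart) would violate Lemma \ref{lemma_nohom1=0,1c3}. You should route the argument through the semistable factors in the same way.
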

\begin{proof}
Assume $E$ is unstable with respect to $\sigma$. Then there is a triangle 
$A \to E \to B$
in $\Ku(X)$, where $B \in \PP(\phi)$ and $A \in \PP(>\phi)$. Since $\Hom(A,B)=0$, by the weak Mukai Lemma \ref{lemma_Mukailemmac3}, we have 
$$\hom^1(A,A)+ \hom^1(B,B) \leq \hom^1(E,E)=2.$$
Note that $B$ cannot have $\hom^1(B,B)=0$ or $1$ by Lemma \ref{lemma_nohom1=0,1c3}. Moreover, if $\hom^1(A,A)=0$, then all its $\sigma$-semistable factors would satisfy the same property by Lemma \ref{lemma_Mukailemmac3}, in contradiction with Lemma \ref{lemma_nohom1=0,1c3}. It follows that $E$ is $\sigma$-semistable. 

Now assume that $E$ is strictly $\sigma$-semistable of phase $\phi$. Up to shifting, we may assume that $E$ is in the heart of $\sigma$. Assume firstly that $E$ has at least two non-isomorphic stable factors. Then we have a sequence
$$0 \to A \to E \to B \to 0$$
in $\PP(\phi)$ with $\Hom(A,B)=0$. By the weak Mukai Lemma \ref{lemma_Mukailemmac3}, we have 
$$\hom^1(A,A)+ \hom^1(B,B) \leq \hom^1(E,E)=2.$$
Since $A$ and $B$ with this property cannot exist in the heart of $\sigma$ by Lemma \ref{lemma_nohom1=0,1c3}, we deduce that $E$ is $\sigma$-stable.

Consider now the case that $E$ has a unique stable factor $A$ up to isomorphism. Note that $\hom^1(A,A)=2$ by Lemma \ref{lemma_homdimheartc3}. Since $\hom^2(A,A)=\hom(A[2],S_{\Ku(X)}(A))=0$ by Lemma \ref{lemma_boundphaseS}, it follows that $\chi(A,A)=-1$. Then $-1 \leq \chi(E,E)=n^2\chi(A,A)=-n^2$ for a positive integer $n$, which is impossible unless $n=1$. This ends the proof of the claim and implies the statement.
\end{proof}

\begin{rmk}
By Lemma \ref{lemma_stabobjectsext1=2} the objects $\II_\ell$ and $\JJ_\ell$ are $\sigma$-stable for every $S_{\Ku(X)}$-invariant stability condition $\sigma$. Thus by \eqref{eq_phases} we deduce that the image of the central charge $Z$ of $\sigma$ is not contained in a line and $Z \in Z(\alpha_0,-\frac{1}{2}) \cdot \mathrm{GL}^+_2(\R)$.
\end{rmk}

\begin{rmk}
As suggested by the referee, we point out that all the arguments in this section work by replacing $\Ku(X)$ with any fractional Calabi-Yau category $\mathcal{D}$ of dimension $<2$ (see \cite{Kuz19} for the definition) with negative definite numerical K-theory.
\end{rmk}

\subsection{Applications}
By Corollary \ref{cor_serrefunctorc3} every $\sigma \in \KK$ is $S_{\Ku(X)}$-invariant. Thus the results of the previous section hold for $\sigma$ and allow to prove Theorem \ref{cor_smoothmod} and to give another proof of the categorical Torelli Theorem, firstly showed in \cite{BMMS}.

\begin{proof}[Proof of Theorem \ref{cor_smoothmod}]
Assume that $M_\sigma(\Ku(X),\kappa)$ is a non-empty moduli space of $\sigma$-stable objects in $\Ku(X)$ with numerical class $\kappa$, with $\sigma=(\AA,Z) \in \KK$. Consider $E \in M_\sigma(\Ku(X),\kappa)$. Up to shift, we may assume $E \in \AA$. By Lemma \ref{lemma_homdimheartc3}, we have the vanishing $\Hom^i(E,E)=0$ for every $i \neq 0,1,2$. By Serre duality, we have
$$\Hom^2(E,E) = \Hom(E[2],S_{\Ku(X)}(E))=0,$$
since by Corollary \ref{cor_serrefunctorc3} and Lemma \ref{lemma_boundphaseS} the object $S_{\Ku(X)}(E)$ is $\sigma$-stable with phase $< \phi(E)+2$. Since $E$ is stable, we have that $\hom^1(E,E)=1-\chi(E,E)$ is constant. This proves that $M_\sigma(\Ku(X),\kappa)$ is smooth, as we wanted.
\end{proof}

For the categorical Torelli Theorem, we need this stronger version of Theorem \ref{Fanolines_modspace}.

\begin{lemma}
\label{lemma_Fanovariety}
Let $\sigma$ be a $S_{\Ku(X)}$-invariant stability condition on $\Ku(X)$. Then $M_\sigma(\Ku(X),[\II_\ell]) \cong \Sigma(X)$.
\end{lemma}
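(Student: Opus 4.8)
The plan is to realize $\Sigma(X)$ as a connected component of $M_\sigma:=M_\sigma(\Ku(X),[\II_\ell])$ and then to rule out any other component. I would start from the fact that $\hom^1_{\Ku(X)}(\II_\ell,\II_\ell)=2$ (this is classical, e.g.\ it follows from the smoothness of $\Sigma(X)$ recalled above, or from \cite[Proposition 2.7]{BMMS}), so that Lemma \ref{lemma_stabobjectsext1=2} applies and $\II_\ell$ is $\sigma$-stable for every $S_{\Ku(X)}$-invariant $\sigma$. Arguing as in \cite[Section 5.2]{BMMS}, the universal ideal sheaf of lines on $X\times\Sigma(X)$ is a flat family of $\sigma$-stable objects of class $[\II_\ell]$, hence induces a morphism $j\colon\Sigma(X)\to M_\sigma$; it is injective on closed points because $\Hom(\II_\ell,\II_{\ell'})=0$ for $\ell\neq\ell'$, and injective on tangent spaces because the Kodaira--Spencer map identifies $T_{[\ell]}\Sigma(X)$ with a subspace of $\Ext^1_{\Ku(X)}(\II_\ell,\II_\ell)$, so $j$ is a closed immersion.

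Next I would bound $M_\sigma$ itself. By the argument used in the proof of Theorem \ref{cor_smoothmod}, which only uses that $\sigma$ is $S_{\Ku(X)}$-invariant, $M_\sigma$ is smooth. Moreover, if $F$ is any $\sigma$-stable object with $[F]=[\II_\ell]$, then after a shift $F$ lies in the heart $\AA$ of $\sigma$, so $\Hom^{<0}(F,F)=0$, while $\Hom^{\geq 3}(F,F)=0$ by Lemma \ref{lemma_homdimheartc3} and $\Hom^2(F,F)=\Hom(F[2],S_{\Ku(X)}(F))=0$ by Serre duality together with Lemma \ref{lemma_boundphaseS}; since $\Hom(F,F)=\C$ by stability and $\chi$ factors through $\NN(\Ku(X))$, we get $\hom^1(F,F)=1-\chi(F,F)=1-\chi(\II_\ell,\II_\ell)=2$. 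Hence $M_\sigma$ is a smooth surface and $j$ is also an open immersion; as $\Sigma(X)$ is connected, it is a connected component of $M_\sigma$.

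It remains to show that $M_\sigma$ has no other component, i.e.\ that every $\sigma$-stable $F$ with $[F]=[\II_\ell]$ is isomorphic to the ideal sheaf of a line up to shift. After a shift we may assume $F$ and $\II_\ell$ both lie in $\AA$, necessarily with the same phase $\phi$ since $Z(F)=Z(\II_\ell)$. Then for every line $\ell$ we have $\Hom^i(\II_\ell,F)=0$ for $i<0$ and $i\geq 3$, and $\Hom^2(\II_\ell,F)=\Hom(F[2],S_{\Ku(X)}(\II_\ell))=0$ by Lemma \ref{lemma_boundphaseS}, so $\hom^0(\II_\ell,F)-\hom^1(\II_\ell,F)=\chi(\II_\ell,F)=\chi(\II_\ell,\II_\ell)=-1$; symmetrically $\hom^0(F,\II_\ell)-\hom^1(F,\II_\ell)=-1$. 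If, for some line, one of $\Hom(\II_\ell,F)$, $\Hom(F,\II_\ell)$ is non-zero, then a non-zero morphism between the $\sigma$-stable objects $\II_\ell$ and $F$, which have the same phase, is an isomorphism, and $F$ is the ideal sheaf of a line up to an even shift, as wanted.

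The main obstacle is therefore to exclude the possibility that $\Hom(\II_\ell,F)=\Hom(F,\II_\ell)=0$ for every line $\ell$ (equivalently $\hom^1(\II_\ell,F)=\hom^1(F,\II_\ell)=1$ for all $\ell$). I would attack this in one of two ways. The first is a deformation argument: since $[\II_\ell]$ is part of the basis of $\NN(\Ku(X))$, hence primitive, and $\chi$ is negative definite on $\NN(\Ku(X))$, the homological estimates of this section (together with Lemma \ref{lemma_nohom1=0,1c3} and the weak Mukai Lemma \ref{lemma_Mukailemmac3}) constrain the classes of the stable factors of any strictly semistable object of class $[\II_\ell]$; I would use this to show that no such object occurs on a small $S_{\Ku(X)}$-invariant deformation of $\sigma$, so that $[\II_\ell]$ has no walls in the $S_{\Ku(X)}$-invariant locus. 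As the central charge of $\sigma$ lies in the connected set $M^+=Z(\alpha_0,-\tfrac12)\cdot\mathrm{GL}^+_2(\R)$, one can then join $\sigma$ to a stability condition in $\KK$ through such deformations without crossing a wall for $[\II_\ell]$, and Theorem \ref{Fanolines_modspace} identifies $M_\sigma$ with $\Sigma(X)$. The second, more self-contained route is to form over $\Sigma(X)$ the universal non-split extension of the pull-back of $F$ by a suitable twist of the universal ideal sheaf, so that $\Ext^1$ has rank one fibrewise; its fibres are strictly $\sigma$-semistable of class $2[\II_\ell]$ with exactly $F$ and $\II_\ell$ as Jordan--H\"older factors, and comparing their $\hom^1$ with the dimension of the resulting family in $M_\sigma(\Ku(X),2[\II_\ell])$ should give a contradiction. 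I expect the control of the walls for $[\II_\ell]$ inside the $S_{\Ku(X)}$-invariant locus, and hence the first route, to be the delicate point.
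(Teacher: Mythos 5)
Your first two paragraphs are fine and match the structure of the paper's argument: $\II_\ell$ is $\sigma$-stable by Lemma \ref{lemma_stabobjectsext1=2}, and for any $\sigma$-stable $F$ with $[F]=[\II_\ell]$ the $S_{\Ku(X)}$-invariance of $\sigma$ gives $\Hom^2(F,F)=0$ and hence $\hom^1(F,F)=1-\chi(F,F)=2$. The genuine gap is in the last step, and you openly leave it open: neither of your two proposed routes (wall-crossing inside the $S_{\Ku(X)}$-invariant locus, or the universal-extension comparison) is actually carried out, and both would require substantial work. What you are missing is that the computation $\hom^1(F,F)=2$, which you already have, immediately closes the argument: Lemma \ref{lemma_stabobjectsext1=2} applies to \emph{any} $S_{\Ku(X)}$-invariant stability condition, so once $\hom^1(F,F)=2$ is known, $F$ is automatically stable with respect to the stability conditions $\sigma(\alpha,\beta)\in\KK$ as well (these are $S_{\Ku(X)}$-invariant by Corollary \ref{cor_serrefunctorc3}). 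Proposition \ref{lem_numclassI} then classifies the $\sigma(\alpha,\beta)$-stable objects of class $[\II_\ell]$ as $\II_{\ell'}[2k]$, which is exactly the statement you were trying to prove by comparing $\Hom(\II_\ell,F)$ and $\Hom(F,\II_\ell)$. In other words, the lemma you cite only for $\II_\ell$ is the device that transfers the problem from the arbitrary invariant $\sigma$ back to $\KK$, where the hard classification has already been done; no new wall analysis is needed. With that substitution your argument becomes complete (and the closed/open immersion discussion in your first two paragraphs can be replaced by the simpler observation that one gets a bijection on points, which is an isomorphism by the argument of \cite[Section 5.2]{BMMS}).
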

\begin{proof}
Let $E$ be a $\sigma$-stable object with $[E]=[\II_\ell]$. Then $\chi(E,E)=-1$. The same argument in the proof of Theorem \ref{cor_smoothmod} implies that $\hom^1(E,E)=2$. Thus by Lemma \ref{lemma_stabobjectsext1=2} $E$ is $\sigma(\alpha,\beta)$-stable. By Proposition \ref{lem_numclassI} we deduce that $E \cong \II_\ell$ for some line $\ell \subset X$ up to shifting. Together with Lemma \ref{lemma_stabobjectsext1=2} this implies a bijection between the Fano surface of lines $\Sigma(X)$ and $M_\sigma(\Ku(X),[\II_\ell])$. By \cite[Section 5.2]{BMMS} this bijection defines an isomorphism of algebraic varieties.
\end{proof}

\begin{thm}[\cite{BMMS}, Theorem 1.1]
\label{thm_catTorelli}
Two cubic threefolds $X$ and $X'$ are isomorphic if and only if there is an exact equivalence between $\Ku(X)$ and $\Ku(X')$.
\end{thm}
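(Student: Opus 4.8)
The plan is to deduce the nontrivial implication by reconstructing the Fano surface of lines $\Sigma(X)$ from $\Ku(X)$, in a way that an arbitrary exact equivalence respects, and then to invoke the classical Torelli theorem for cubic threefolds. The reverse implication is immediate: an isomorphism $f\colon X\xrightarrow{\sim}X'$ induces an exact equivalence $f^{*}\colon\D(X')\to\D(X)$ sending $\OO_{X'}$ to $\OO_{X}$ and $\OO_{X'}(H')$ to $\OO_{X}(H)$, hence restricting to an equivalence $\Ku(X')\xrightarrow{\sim}\Ku(X)$.

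So let $\Phi\colon\Ku(X)\xrightarrow{\sim}\Ku(X')$ be an exact equivalence, and fix a stability condition $\sigma'$ in the orbit $\KK'\subset\Stab(\Ku(X'))$ (the analogue of $\KK$ for $X'$). By Corollary \ref{cor_serrefunctorc3} it is $S_{\Ku(X')}$-invariant. Since every exact equivalence intertwines Serre functors and the left $\Aut$-action commutes with the right $\tilde{\mathrm{GL}}^{+}_{2}(\R)$-action on stability manifolds, the stability condition $\sigma:=\Phi^{-1}\cdot\sigma'$ on $\Ku(X)$ is $S_{\Ku(X)}$-invariant; hence Lemma \ref{lemma_Fanovariety} gives $M_{\sigma}(\Ku(X),[\II_{\ell}])\cong\Sigma(X)$, and likewise $M_{\sigma'}(\Ku(X'),[\II_{\ell'}])\cong\Sigma(X')$.

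Before comparing these moduli spaces I would normalize $\Phi$ on the level of numerical classes. A Riemann--Roch computation with $\td(X')$, as in the proof of Proposition \ref{lem_numclassI}, shows that $\NN(\Ku(X'))$ with its symmetrized Euler form is, up to sign, the $A_{2}$ root lattice, so there are exactly six classes $v$ with $\chi(v,v)=-1$, among them $[\II_{\ell'}]$ and $[\JJ_{\ell'}]$. As $\Phi_{*}$ is an isometry, $\Phi_{*}[\II_{\ell}]$ is one of these six. Using the relations $(\L_{\OO_{X'}}\circ(-\otimes\OO_{X'}(H')))^{-1}_{*}[\II_{\ell'}]=-[\JJ_{\ell'}]$ and $(\L_{\OO_{X'}}\circ(-\otimes\OO_{X'}(H')))^{-1}_{*}[\JJ_{\ell'}]=[\II_{\ell'}]-[\JJ_{\ell'}]$ (see the proof of Proposition \ref{prop_twistfctrvsstabcond}) together with the shift $[1]$, one checks that the subgroup of $\mathrm{GL}(\NN(\Ku(X')))$ generated by these two autoequivalences acts transitively on those six classes. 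Hence, after replacing $\Phi$ by $\Psi'\circ\Phi$ for a suitable composition $\Psi'$ of powers of $\L_{\OO_{X'}}\circ(-\otimes\OO_{X'}(H'))$ and of $[1]$, I may assume $\Phi_{*}[\II_{\ell}]=[\II_{\ell'}]$. This replacement preserves everything in the previous paragraph: both $[1]$ and $\L_{\OO_{X'}}\circ(-\otimes\OO_{X'}(H'))$ map $\KK'$ to itself (for the latter, this is Proposition \ref{prop_twistfctrvsstabcond}), and an exact equivalence always intertwines Serre functors, so the stability condition on $\Ku(X)$ induced from $\Psi'\cdot\sigma'\in\KK'$ is still $S_{\Ku(X)}$-invariant and Lemma \ref{lemma_Fanovariety} still applies.

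Now $\Phi$ carries $\sigma$-stable objects of class $[\II_{\ell}]$ to $\sigma'$-stable objects of class $[\II_{\ell'}]$ and thus induces a bijection $M_{\sigma}(\Ku(X),[\II_{\ell}])\to M_{\sigma'}(\Ku(X'),[\II_{\ell'}])$; arguing as in \cite[Section 5.2]{BMMS}, by transporting the universal ideal sheaf on $X\times\Sigma(X)$, this bijection upgrades to an isomorphism of varieties, whence $\Sigma(X)\cong\Sigma(X')$. Finally, $\Sigma(X)$ determines the intermediate Jacobian $J(X)$ together with its principal polarization --- $J(X)$ is the Albanese variety of $\Sigma(X)$, and the polarization is recovered from the image of $\Sigma(X)\times\Sigma(X)$ in $J(X)$ --- so the classical Torelli theorem for cubic threefolds yields $X\cong X'$. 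The step I expect to be most delicate is this last upgrade from a bijection of geometric points to an isomorphism of schemes: one must genuinely move a universal family across the abstract equivalence $\Phi$, exactly the point addressed in \cite[Section 5.2]{BMMS}; by comparison, the numerical normalization above is routine once one notices that it can be performed by autoequivalences preserving the orbit $\KK'$.
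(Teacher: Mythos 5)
Your proposal is correct and follows essentially the same route as the paper: identify $\Sigma(X)$ and $\Sigma(X')$ with moduli spaces of stable objects of class $[\II_\ell]$ for an $S_{\Ku}$-invariant stability condition (Corollary \ref{cor_serrefunctorc3} plus Lemma \ref{lemma_Fanovariety}), normalize $\Phi$ so that $\Phi_*[\II_\ell]=[\II_{\ell'}]$, transport the moduli space across $\Phi$, and conclude by Torelli for the Fano surface. The only differences are cosmetic: you re-derive the numerical normalization (which the paper imports as \cite[Lemma 2.8]{BMMS}) via the $A_2$-lattice structure and the action of $\L_{\OO_{X'}}\circ(-\otimes\OO_{X'}(H'))$ and the shift, and you finish through the intermediate Jacobian and classical Torelli where the paper instead cites \cite[Proposition 4]{Charles}.
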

\begin{proof}
Assume there is an exact equivalence $\Phi: \Ku(X) \xrightarrow{\sim} \Ku(X')$. By \cite[Lemma 2.8]{BMMS}, up to composing with a power of the Serre functor of $\Ku(X)$, we may assume $[\Phi_*(\II_{\ell})]=[\II_{\ell'}]$ for $\ell$, $\ell'$ lines in $X$ and $X'$, respectively. Set $\sigma:=\sigma(\alpha,\beta) \in \Stab(\Ku(X))$; by Theorem \ref{Fanolines_modspace} and our assumption we have
$$\Sigma(X) \cong M_\sigma(\Ku(X),[\II_{\ell}]) \cong M_{\Phi \cdot \sigma}(\Ku(X'),[\II_{\ell'}]).$$
Note that $\Phi \cdot \sigma$ is a $S_{\Ku(X')}$-invariant stability condition. Indeed, we have $S_{\Ku(X')} \cdot (\Phi \cdot \sigma)=\Phi \cdot (S_{\Ku(X)} \cdot \sigma)$ since the Serre functors commute with equivalences (see \cite[Lemma 1.30]{Huy}). Then we have $\Phi \cdot (S_{\Ku(X)} \cdot \sigma)= \Phi \cdot (\sigma \cdot \tilde{g})=(\Phi \cdot \sigma) \cdot \tilde{g}$ by Corollary \ref{cor_serrefunctorc3} and the fact that the actions of $\Phi$ and $\tilde{\text{GL}}^+_2(\R)$ commute. Then by Lemma \ref{lemma_Fanovariety} we deduce that $M_{\Phi \cdot \sigma}(\Ku(X'),[\II_{\ell'}]) \cong \Sigma(X')$. Since the canonical bundle of the Fano surface of a cubic threefold is identified with the Pl\"ucker polarization, by \cite[Proposition 4]{Charles} we conclude that $X \cong X'$.
\end{proof}

\section{Quartic double solids}

Let $X$ be the double cover of $\P^3$ ramified in a quartic surface. By \cite[Corollary 4.6]{Kuz19}, the Serre functor of $\Ku(X)$ is
$$S_{\Ku(X)}= \iota [2],$$
where $\iota$ is the autoequivalence of $\Ku(X)$ induced by the involution of the double covering. We firstly study the action of $\iota$ on $\KK$ and on its boundary.

\begin{lemma}
\label{lemma_iotapreseK}
The involution $\iota$ acts as the identity on the closure $\overline{\KK}$ of $\KK$ in $\Stab(\Ku(X))$.
\end{lemma}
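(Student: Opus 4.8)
The statement to prove is that the involution $\iota$ acts as the identity on $\overline{\KK}$. Since $\KK$ is an orbit of the $\tilde{\mathrm{GL}}^+_2(\R)$-action and this action commutes with the action of autoequivalences, it suffices to show that $\iota$ fixes a single stability condition $\sigma(\alpha_0,-\tfrac12)$ in $\KK$, i.e.\ that $\iota \cdot \sigma(\alpha_0,-\tfrac12) = \sigma(\alpha_0,-\tfrac12) \cdot \tilde{g}$ for some $\tilde{g} \in \tilde{\mathrm{GL}}^+_2(\R)$; then continuity of the action extends this to the closure $\overline{\KK}$. Actually more is claimed — that $\iota$ is the identity, not merely that it preserves the orbit — so the $\tilde g$ we produce must be trivial. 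The natural route is: first show $\iota$ preserves the orbit $\KK$ (as an instance of the general remark at the end of Section 5, since $S_{\Ku(X)} = \iota[2]$ already preserves $\KK$ by Corollary \ref{cor_serrefunctorc3}, and the shift $[2]$ acts trivially on the orbit, so $\iota$ does too), and then pin down that the induced $\tilde g$ is in fact the identity.

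First I would record that $\iota$ fixes the numerical classes $\kappa_1 = [\II_\ell]$ and $\kappa_2$. This is because $\iota$ preserves $\OO_X$, $\OO_X(H)$ and the structure sheaf $\OO_\ell$ of a line (lines in $X$ come in pairs swapped by $\iota$, or are fixed, but in either case the class is preserved), hence $\iota_*$ is the identity on $\NN(\Ku(X)) \cong \Z^2$. Consequently the central charge is $\iota$-invariant: $Z(\alpha_0,-\tfrac12) \circ (\iota_*)^{-1} = Z(\alpha_0,-\tfrac12)$. So the stability condition $\iota \cdot \sigma(\alpha_0,-\tfrac12)$ has exactly the same central charge as $\sigma(\alpha_0,-\tfrac12)$. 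By the argument used repeatedly in the paper (Proposition \ref{prop_conncomp}, Proposition \ref{prop_twistfctrvsstabcond}, via \cite[Lemma 8.11]{BMS}), two stability conditions in $\KK$ with the same central charge whose hearts are tilts of a common heart must coincide. Hence it remains to check that the heart $\iota(\AA(\alpha_0,-\tfrac12))$ is a tilt of $\AA(\alpha_0,-\tfrac12)$ — equivalently that $\iota \cdot \sigma(\alpha_0,-\tfrac12)$ lies in $\KK$ and differs from $\sigma(\alpha_0,-\tfrac12)$ only by an element of $\tilde{\mathrm{GL}}^+_2(\R)$ acting trivially on the central charge, which forces that element to be the identity (since the $\tilde{\mathrm{GL}}^+_2(\R)$-action on central charges has trivial stabilizer only... — more precisely, an element fixing both $Z$ and the heart is the identity).

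The one genuine point needing argument is that $\iota$ preserves the orbit $\KK$, i.e.\ maps $\AA(\alpha_0,-\tfrac12)$ to a tilt of itself. The cleanest way is to invoke $S_{\Ku(X)} = \iota[2]$: by Corollary \ref{cor_serrefunctorc3} (which the paper notes holds for every Fano threefold of Picard rank $1$ and index $2$, hence in particular for $X$ a quartic double solid), $S_{\Ku(X)}$ preserves $\KK$, so $S_{\Ku(X)} \cdot \sigma(\alpha_0,-\tfrac12) = \sigma(\alpha_0,-\tfrac12) \cdot \tilde{h}$ for some $\tilde h$; since shifting by $[2]$ acts on $\Stab$ as an element of $\tilde{\mathrm{GL}}^+_2(\R)$ (the one lifting $-\mathrm{id}$ appropriately, i.e.\ $\phi \mapsto \phi+2$), we get $\iota \cdot \sigma(\alpha_0,-\tfrac12) = \sigma(\alpha_0,-\tfrac12) \cdot \tilde{h} \cdot [-2]$, which is again in $\KK$. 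Combined with the central-charge invariance above, the composite $\tilde h \cdot [-2]$ fixes $Z(\alpha_0,-\tfrac12)$; as in Remark \ref{rmk_almostcc} the central charge is injective, and an element of $\tilde{\mathrm{GL}}^+_2(\R)$ fixing an injective central charge must project to the identity in $\mathrm{GL}^+_2(\R)$, hence is the shift $[2n]$ for some $n$; tracking phases of $\II_\ell$ (or comparing hearts) shows $n=0$. Therefore $\iota \cdot \sigma(\alpha_0,-\tfrac12) = \sigma(\alpha_0,-\tfrac12)$, and by continuity of the $\mathrm{Aut}(\Ku(X))$-action this identity propagates to all of $\overline{\KK}$.

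\medskip

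\emph{Main obstacle.} The subtle step is ruling out a nonzero shift — i.e.\ showing the element of $\tilde{\mathrm{GL}}^+_2(\R)$ relating $\iota\cdot\sigma(\alpha_0,-\tfrac12)$ to $\sigma(\alpha_0,-\tfrac12)$ is genuinely trivial and not merely a shift by $[2n]$. This is handled by noting $\iota$ is an involution ($\iota^2 = \id$), so the associated $\tilde g$ satisfies $\tilde g^2 = \id$ in $\tilde{\mathrm{GL}}^+_2(\R)$; the only such element that is a shift is the trivial one. Alternatively one checks directly that $\iota$ fixes the heart $\CCoh^{-1/2}(X)$ up to the usual point-supported ambiguity — $\iota$ being induced by a geometric automorphism preserves $\Coh(X)$ and slope stability — and hence fixes $\AA(\alpha_0,-\tfrac12)$ on the nose, which immediately gives both that $\tilde g$ is trivial and that $\iota$ is the identity on $\KK$, then on $\overline{\KK}$ by continuity.
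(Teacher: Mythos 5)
Your proposal is correct, but your primary route is considerably heavier than the paper's. The paper's entire proof is the observation you relegate to your final ``alternatively'': since $\iota$ is induced by a geometric automorphism it preserves $\Coh(X)$, slope stability and the (numerical) Chern character, hence it fixes the double-tilted heart and the central charge, so $\iota\cdot\sigma(\alpha,\beta)=\sigma(\alpha,\beta)$ on the nose; commutation of the $\tilde{\mathrm{GL}}^+_2(\R)$-action with autoequivalences then gives that $\iota$ fixes every point of the orbit $\KK$, and continuity of the $\iota$-action extends this to $\overline{\KK}$. Your main argument instead deduces that $\iota=S_{\Ku(X)}\circ[-2]$ preserves $\KK$ from the (generalized) Corollary \ref{cor_serrefunctorc3}, and then pins down the residual $\tilde g$ as the identity via injectivity of the central charge (so $\tilde g$ projects to $\mathrm{id}\in\mathrm{GL}^+_2(\R)$ and is a shift $[2n]$) together with $\iota^2=\id$ (so $n=0$). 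That chain of reasoning is sound and has the merit of isolating exactly which abstract properties of $\iota$ are used, but it imports the technical wall-crossing machinery of Section 5 where a one-line geometric observation suffices; it also obscures the stronger conclusion that $\iota$ fixes the heart $\AA(\alpha,\beta)$ itself, not just the stability condition up to a trivial $\tilde g$. (Minor slip: $[2]$ corresponds to $(\phi\mapsto\phi+2,\mathrm{id})$, i.e.\ it lifts the identity matrix, not $-\mathrm{id}$; this does not affect your argument.) Since your alternative coincides with the paper's proof, nothing is missing.
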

\begin{proof}
Note that $\iota \cdot \sigma(\alpha,\beta)=\sigma(\alpha,\beta)$. Indeed, $\iota$ preserves $\Coh(X)$ and the Chern character. Since the action of $\tilde{\text{GL}}_2^+(\R)$ commutes with autoequivalences, the previous observation implies that $\iota$ acts as the identity on $\KK$. Since the action of $\iota$ on the stability manifold is continuous, we deduce the statement.
\end{proof}
 
Lemma \ref{lemma_iotapreseK} allows to prove the following properties, in analogy to what was done in Section 5.2.  
\begin{lemma}
\label{lemma_homdimheart}
For every $\sigma=(\AA,Z)$ in $\overline{\KK}$, the heart $\AA$ has homological dimension $2$.
\end{lemma}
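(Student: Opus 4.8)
The plan is to mimic the proof of Lemma \ref{lemma_homdimheartc3} from the cubic threefold case, using the special shape $S_{\Ku(X)}=\iota[2]$ of the Serre functor together with Lemma \ref{lemma_iotapreseK}, which guarantees that $\iota$ acts trivially on $\overline{\KK}$.

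First I would record the relevant phase estimate. Fix $\sigma=(\AA,Z)\in\overline{\KK}$. By Lemma \ref{lemma_iotapreseK} we have $\iota\cdot\sigma=\sigma$ as stability conditions, and in particular the slicings agree; hence $\iota$ carries $\sigma$-semistable objects to $\sigma$-semistable objects of the same phase. Consequently, for every $\sigma$-semistable $F$ of phase $\phi$, the object $S_{\Ku(X)}(F)=\iota(F)[2]$ is $\sigma$-semistable of phase $\phi+2$. This is in fact sharper than the inequality of Lemma \ref{lemma_boundphaseS} used in the cubic case, where only $\phi<\phi(S_{\Ku(X)}(F))<\phi+2$ was available.

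Next, for $A,B\in\AA$ I would show $\Hom^i(A,B)=0$ whenever $i<0$ or $i\geq 3$. The vanishing for $i<0$ is immediate from the definition of a heart of a bounded $t$-structure. For $i\geq 3$, apply Serre duality in $\Ku(X)$ to get $\Hom^i(A,B)=\Hom(A,B[i])\cong\Hom(B[i],S_{\Ku(X)}(A))^{\vee}$. Decomposing $A$ and $B$ into Harder--Narasimhan factors with respect to $\sigma$: the HN factors of $A$ have phases in $(0,1]$, so by the phase estimate the HN factors of $S_{\Ku(X)}(A)$ have phases in $(2,3]$, while the HN factors of $B[i]$ have phases in $(i,i+1]\subseteq(3,+\infty)$. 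Since there are no nonzero morphisms from a semistable object of strictly larger phase to one of strictly smaller phase, and every HN factor of $B[i]$ has phase $>3\geq$ the phase of every HN factor of $S_{\Ku(X)}(A)$, we conclude $\Hom(B[i],S_{\Ku(X)}(A))=0$, hence $\Hom^i(A,B)=0$. Therefore $\AA$ has homological dimension $2$.

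Once Lemma \ref{lemma_iotapreseK} is granted, the argument is essentially formal; the only points needing a little care are that $\iota$ genuinely preserves phases on $\overline{\KK}$ (which is exactly the content of the equality of slicings $\iota\cdot\sigma=\sigma$, not merely equality of the underlying points of $\Stab(\Ku(X))$) and that every element of $\overline{\KK}$ is a bona fide Bridgeland stability condition, so that HN filtrations exist and the Serre duality computation is legitimate. I do not expect any substantive obstacle beyond this.
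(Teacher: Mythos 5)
Your proposal is correct and follows essentially the same route as the paper: both use Lemma \ref{lemma_iotapreseK} to see that $\iota$ preserves phases of semistable factors, then apply Serre duality $\Hom^i(A,B)\cong\Hom(B[i],\iota(A)[2])^{\vee}$ and compare phases (equivalently, the paper notes $\Hom(B,\iota(A)[2-i])=0$ for $i>2$ since $\iota(A)\in\AA$ and $2-i<0$). The extra care you take about the slicings agreeing, and about points of $\overline{\KK}$ being genuine stability conditions, is sound and matches the paper's implicit use of these facts.
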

\begin{proof}
Let $A, B$ be in $\AA$. The vanishing of $\Hom^{i}(A,B)=0$ for $i <0$ follows from the property of heart of $\AA$. By Lemma \ref{lemma_iotapreseK}, $\iota$ preserves the phases of stable factors and the $\sigma$-semistable factors of $\iota(A)$ are the image via $\iota$ of the semistable factors of $A$. Thus for every $i > 2$ we have
$$\Hom^i(A,B)=\Hom(B,\iota(A)[2-i])=0.$$
This proves that $\AA$ has homological dimension $2$.
\end{proof}

\begin{lemma}
\label{lemma_extdimheart}
For every $\sigma=(\AA,Z)$ in $\overline{\KK}$, there are no nonzero objects $A \in \AA$ with $\hom^1(A,A)=0$ or $1$.
\end{lemma}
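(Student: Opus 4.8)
The plan is to reproduce the proof of Lemma~\ref{lemma_nohom1=0,1c3}, substituting for the reference \cite[Proposition~2.7(ii)]{BMMS} the analogous negative-definiteness statement for a quartic double solid. So let $\sigma=(\AA,Z)\in\overline{\KK}$ and let $0\neq A\in\AA$. The first step is to observe that $[A]\neq 0$ in $\NN(\Ku(X))$: since $\sigma$ is a Bridgeland stability condition, $Z$ is a stability function on $\AA$, hence $Z(A)\neq 0$, and $Z$ factors through the numerical class. The second step invokes Lemma~\ref{lemma_homdimheart}: the heart $\AA$ has homological dimension $2$, so
$$\chi(A,A)=\hom^0(A,A)-\hom^1(A,A)+\hom^2(A,A),$$
and since $\hom^0(A,A)\geq 1$ and $\hom^2(A,A)\geq 0$, it is enough to show $\chi(A,A)\leq -1$ in order to conclude $\hom^1(A,A)\geq 2$, which contradicts $\hom^1(A,A)\in\{0,1\}$.

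The heart of the matter is therefore the third step: the Euler pairing on $\NN(\Ku(X))\cong\Z^2$ is negative definite for a quartic double solid. I would verify this by a routine Hirzebruch--Riemann--Roch computation in the basis $\kappa_1=[\II_\ell]$, $\kappa_2$ of \cite[Proposition~3.9]{Kuz}, using $\td(X)=(1,H,\frac56H^2,\frac12H^3)$ and $H^3=2$; the outcome is
$$\chi(\kappa_1,\kappa_1)=\chi(\kappa_1,\kappa_2)=\chi(\kappa_2,\kappa_1)=-1,\qquad \chi(\kappa_2,\kappa_2)=-2,$$
so that $\chi(a\kappa_1+b\kappa_2,\,a\kappa_1+b\kappa_2)=-\big((a+b)^2+b^2\big)\leq -1$ for every nonzero $(a,b)\in\Z^2$. (Symmetry of the pairing is forced by Serre duality, since $S_{\Ku(X)}=\iota[2]$ and $\iota$ acts trivially on $\NN(\Ku(X))$; alternatively one may invoke the Euler-form computation in \cite{APR}.) Applying this with $v=[A]\neq 0$ gives $\chi(A,A)\leq -1$ and finishes the proof.

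I expect the only genuinely delicate point to be the negative-definiteness in the third step, which is also precisely where the degree-$2$ hypothesis is used: for a degree-$4$ Fano threefold one has $\Ku(X)\cong\D(C)$ with $C$ of genus $2$, and there the Euler pairing is not even symmetric and vanishes on $[\OO_x]$, so the statement of the lemma genuinely fails. Everything else is the same formal bookkeeping as in Lemma~\ref{lemma_nohom1=0,1c3}, so I do not anticipate further obstacles.
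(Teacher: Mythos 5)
Your proposal is correct and follows exactly the paper's argument: negative definiteness of the Euler pairing on $\NN(\Ku(X))$ (which the paper cites from Kuznetsov's computation and you verify directly, with the correct matrix $\chi(\kappa_i,\kappa_j)=\begin{pmatrix}-1 & -1\\ -1 & -2\end{pmatrix}$) combined with the homological dimension $2$ of the heart from Lemma \ref{lemma_homdimheart} to force $\hom^1(A,A)\geq 2$. No issues.
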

\begin{proof}
Using the computation of the pairing on the numerical K-theory of $\Ku(X)$ in \cite{Kuz}, we get $\chi(A,A) \leq -1$. Then if $A \in \AA$, by Lemma \ref{lemma_homdimheart}, we must have $\hom^1(A,A) \geq 2$. 
\end{proof}

In order to prove Theorem \ref{thm_conncomp}, we finally need a version of the Mukai Lemma in this setting.
\begin{lemma}[Weak Mukai Lemma]
\label{Mukailemma}
Let $A \to E \to B$ be a triangle in $\mathsf{Ku}(X)$ with $\Hom(A,B)=\Hom(A,\iota(B))=0$. Then
$$\hom^1(A,A)+ \hom^1(B,B) \leq \hom^1(E,E).$$
\end{lemma}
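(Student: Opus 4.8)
The plan is to reduce the statement to the standard Mukai-type argument of \cite[Lemma 2.5]{BaBri}, already invoked in the proof of Lemma \ref{lemma_Mukailemmac3}, whose only inputs are the two vanishings $\Hom(A,B)=0$ and $\Hom^{2}(B,A)=0$. The first is part of the hypothesis. For the second, I would use $S_{\Ku(X)}=\iota[2]$: by Serre duality,
$$\Hom(B,A[2])\cong\Hom(A[2],S_{\Ku(X)}(B))^{*}=\Hom(A[2],\iota(B)[2])^{*}=\Hom(A,\iota(B))^{*}=0.$$
Note that $\iota$ being a genuine involution rather than the identity causes no difficulty here, since it enters only through the Serre functor as an autoequivalence.

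Having both vanishings, I would carry out the cohomological chase. Write the triangle as $A\xrightarrow{f}E\xrightarrow{g}B\xrightarrow{h}A[1]$ and apply $\mathsf{R}\Hom(A,-)$, $\mathsf{R}\Hom(B,-)$ and $\mathsf{R}\Hom(-,E)$. From the first, $\Hom(A,B)=0$ forces $f_{*}\colon\Hom^{1}(A,A)\to\Hom^{1}(A,E)$ to be injective; moreover its image lies in the kernel of the connecting map $\Hom^{1}(A,E)\to\Hom^{2}(B,E)$, because that map is pre-composition with $h$ and on a class $f\circ\phi$ it factors through $\Hom^{2}(B,A)=0$. From the second sequence, $\Hom^{2}(B,A)=0$ makes $g_{*}\colon\Hom^{1}(B,E)\twoheadrightarrow\Hom^{1}(B,B)$ surjective. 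From the third sequence, the connecting map $\partial\colon\Hom^{0}(A,E)\to\Hom^{1}(B,E)$ satisfies $\mathrm{im}(\partial)\subseteq\ker(g_{*})$, since $g_{*}\circ\partial$ amounts to post-composing with $g$ and then pre-composing with $h$, hence factors through $\Hom(A,B)=0$.

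Finally I would read these facts off the exact segment $\Hom^{0}(A,E)\xrightarrow{\partial}\Hom^{1}(B,E)\xrightarrow{g_{*}}\Hom^{1}(E,E)\xrightarrow{f_{*}}\Hom^{1}(A,E)\to\Hom^{2}(B,E)$, which yields
$$\hom^{1}(E,E)\ge\bigl(\hom^{1}(B,E)-\dim\mathrm{im}(\partial)\bigr)+\hom^{1}(A,A)\ge\hom^{1}(B,B)+\hom^{1}(A,A),$$
where the first summand is $\ge\hom^{1}(B,B)$ because $\mathrm{im}(\partial)\subseteq\ker(g_{*})$ and $g_{*}$ is surjective, and the second term enters because the injective image of $\Hom^{1}(A,A)$ is contained in the kernel of $\Hom^{1}(A,E)\to\Hom^{2}(B,E)$. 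I expect the only real obstacle to be careful bookkeeping: identifying each connecting homomorphism with the correct composition with $f$, $g$, $h$, and checking that the two vanishings are used exactly where needed — in particular that, unlike in the K3-surface formulation, no assumption that $A$ and $B$ lie in a heart is required. Alternatively, once the two vanishings are established one may simply cite \cite[Lemma 2.5]{BaBri} verbatim, exactly as in the proof of Lemma \ref{lemma_Mukailemmac3}.
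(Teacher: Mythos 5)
Your proposal is correct and follows the paper's proof exactly: the paper likewise uses Serre duality with $S_{\Ku(X)}=\iota[2]$ to deduce $\Hom(B,A[2])=\Hom(A,\iota(B))=0$ and then invokes the argument of \cite[Lemma 2.5]{BaBri}. Your explicit write-out of the dimension count is a correct (and slightly more self-contained) rendering of that cited argument, but it is not a different route.
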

\begin{proof}
By Serre duality $\Hom(B,A[2])=\Hom(A,\iota(B))=0$. Then the same argument of \cite[Lemma 2.5]{BaBri} applies, implying the result.
\end{proof}

\begin{proof}[Proof of Theorem \ref{thm_conncomp}]
Assume that $\DD$ is a connected component of $\Stab(\Ku(X))$ strictly containing $\KK$. Then $\DD$ contains the boundary of $\KK$ in $\Stab(\Ku(X))$, which consists of stability conditions whose central charge is not injective, by Remark \ref{rmk_almostcc}. Denote by $\sigma=(\PP,Z)$ one of them. By the support property, we may assume that there is $\sigma_1:=\tilde{g} \cdot \sigma(\alpha_0,-\frac{1}{2})$ such that if $E$ is $\sigma_1$-stable of phase $\phi$, then $E \in \PP(\phi-\varepsilon, \phi+\varepsilon)$. By assumption, the image of $Z$ is contained in a line; this implies $E \in \PP(\theta)$ for a certain $\phi-\varepsilon < \theta < \phi+\varepsilon$. As a consequence, $\II_\ell$ and $\JJ_\ell$ are $\sigma$-semistable, by Proposition \ref{lemma_idealsheafstab} and Proposition \ref{prop_stabJ}.  

We claim that $\II_\ell$ and $\JJ_\ell$ are $\sigma$-stable. As a consequence, we would have that \eqref{eq_phases} holds for $\sigma$, contradicting the fact that $Z$ has image in a line. This implies that such degenerate $\sigma$ cannot exist on the boundary of $\KK$, proving that $\KK=\DD$.

It remains to prove the claim. Assume $E= \II_\ell$ or $\JJ_\ell$ is strictly $\sigma$-semistable of phase $\phi$. If all stable factors of $E$ are isomorphic, we get a contradiction with $\chi(E,E)=-1$. Assume that $E$ has at least two non-isomorphic stable factors. Then by a standard argument (see \cite[Proof of Lemma 2.6]{BaBri} or \cite[(2.4)]{HMS}) we can write a sequence in $\PP(\phi)$ of the form
$$0 \to A \to E \to B \to 0,$$
where $A$, $B$ are $\sigma$-semistable, all the stable factors of $B$ are isomorphic and $\Hom(A,B)=0$. If $\Hom(A, \iota(B))=0$, then Lemmas \ref{lemma_extdimheart} and \ref{Mukailemma} imply the stability of $E$ by the same argument as in Lemma \ref{lemma_stabobjectsext1=2}. Assume that $\Hom(A, \iota(B)) \neq 0$. By Lemma \ref{lemma_iotapreseK} the object $\iota(C)$ is $\sigma$-stable of the same phase of $C$. Then we have $\Hom(A,\iota(C)) \neq 0$. Moreover, $\iota(C)$ is a quotient of $A$ and $C \ncong \iota(C)$. We can write a sequence $D \to A \to F$, where $D$ and $F$ are $\sigma$-semistable, all the stable factors of $F$ are isomorphic to $\iota(C)$, and $C$, $\iota(C)$ are not stable factors of $D$. Then we have the following commutative diagram 
$$
\xymatrix{
D \ar[d]^{\id} \ar[r] & A \ar[d] \ar[r] & F \ar[d]\\
D \ar[d] \ar[r] & E \ar[d] \ar[r] & G \ar[d] \\
0 \ar[r] & B \ar[r]^{\id} & B,
}
$$
where $G$ is $\sigma$-semistable and whose stable factors are isomorphic to $C$ or $\iota(C)$. It follows that $\Hom(D,G)=\Hom(D,\iota(G))=0$ by Lemma \ref{lemma_iotapreseK}. By Mukai Lemma \ref{Mukailemma}, we deduce that
$$\hom^1(D,D)+\hom^1(G,G) \leq \hom^1(E,E)=2.$$
Since this is impossible by Lemma \ref{lemma_extdimheart}, we must have $\Hom(A, \iota(B))=0$. This ends the proof of the statement.
\end{proof}

Dipartimento di Matematica ``F.\ Enriques'', Universit\`a degli Studi di Milano, Via Cesare Saldini 50, 20133 Milano, Italy. \\
\indent E-mail address: \texttt{laura.pertusi@unimi.it}\\
\indent URL: \texttt{http://www.mat.unimi.it/users/pertusi} \\

Center for Applied Mathematics, Tianjin University, Weijin Road 92, Tianjin 300072, P.\ R.\ China. \\
\indent E-mail address: \texttt{syangmath@tju.edu.cn}\\
\indent URL: \texttt{http://cam.tju.edu.cn/en/faculty/index.php?id=44}
\end{document}